\def\G{\operatorname{G}}
\def\K{\operatorname{K}}
\def\map{\longrightarrow}
\def\GL{\operatorname{GL}}
\def\SL{\operatorname{SL}}
\def\Sp{\operatorname{Sp}}
\def\SK{\operatorname{SK}}
\def\Rat{{\mathbb Q}}
\def\Int{{\mathbb Z}}
\def\pamod#1{\,(\operatorname{mod}{\, #1})\,}
\newtheorem{theorem}{Theorem}
\newtheorem{oldtheorem}{Theorem}
\newtheorem{proposition}{Proposition}
\newtheorem{lemma}{Lemma}
\newtheorem{problem}{Problem}
\newtheorem*{corollary}{Corollary}
\newtheorem{Corollary}{Corollary}
\title[inclusions among commutators of
elementary subgroups]{inclusions among commutators of\\
elementary subgroups}
\author{Nikolai Vavilov}
\address{St. Petersburg State University}
\email{nikolai-vavilov@yandex.ru}
\thanks{This publication is supported by Russian Science Foundation grant 17-11-01261.}
\author{Zuhong Zhang}
\address{Department of  Mathematics, Beijing Institute
of Technology, Beijing, China}
\email{zuhong@hotmail.com}
 \date{}
\keywords{general linear group, congruence subgroups, elementary subgroups, standard commutator formulae}
\begin{document}


\begin{abstract}
In the present paper we continue the study of the elementary
commutator subgroups $[E(n,A),E(n,B)]$, where $A$ and $B$
are two-sided ideals of an associative ring $R$, $n\ge 3$.
First, we refine and expand a number of the auxiliary results, 
both classical ones, due to Bass, Stein, Mason, Stothers, Tits, 
Vaserstein, van der Kallen, Stepanov, as also some of the 
intermediate results in our joint works with Hazrat, and our own 
recent papers \cite{NZ2,NZ3}. The gimmick of the present paper
is an explicit triple congruence for elementary commutators 
$[t_{ij}(ab),t_{ji}(c)]$, where $a,b,c$ belong to three ideals
$A,B,C$ of $R$. In particular, it provides a sharper counterpart 
of the three subgroups lemma at the level of ideals. We derive 
some further striking corollaries thereof, such as a complete 
description of generic lattice of commutator subgroups
$[E(n,I^r),E(n,I^s)]$, new inclusions among multiple
elementary commutator subgroups, etc.
\end{abstract}

\maketitle


\section{Introduction}
 	
Let $\GL(n,R)$  be the general linear group of degree $n\ge 3$
over an associative ring $R$ with 1. For an ideal $A\unlhd R$ we
denote by $\GL(n,R,A)$ the principal congruence subgroup of 
level $A$ and by $E(n,R,A)$ the corresponding relative elementary
subgroup. The study of commutator subgroups 
$$ [\GL(n,R,A),\GL(n,R,B)],\quad [\GL(n,R,A),E(n,R,B)],\quad [E(n,R,A),E(n,R,B)] $$
\noindent
and other related birelative groups
has a venerable history. It goes back to the beginnings of
algebraic $K$-theory in the works of Hyman Bass
\cite{Bass_stable, Bass, Bass_Milnor_Serre}, and was then
continued, at the stable level, by Alec Mason, Wilson Stothers
\cite{Mason_Stothers, Mason_commutators_2}, and many others.
\par
The next breakthrough, for rings satisfying commutativity conditions, 
came with the works by Andrei Suslin, Leonid Vaserstein, Zenon Borewicz
and the first author, Anthony Bak, Alexei Stepanov,
and many others \cite{Suslin, Vaserstein_normal,borvav, Bak,
Stepanov_Vavilov_decomposition}. However, 
these papers mostly addressed only the case where one of the
ideals $A$ or $B$ was the ring $R$ itself. These results depended on new powerful localisation methods introduced by Daniel Quillen and Suslin in connection with Serre's problem, and their off-springs, and 
also on remarkable geometric methods, see \cite{Bak_Vavilov, Hazrat_Vavilov_Bak65} for a systematic description of that stage.
\par
The next stage started with our joint papers with Roozbeh Hazrat 
and Alexei Stepanov \cite{Vavilov_Stepanov_standard, Hazrat_Zhang, Vavilov_Stepanov_revisited}, where we addressed the general case, 
first for $\GL(n,R)$ over commutative rings, then over 
quasi-finite rings. Our works
\cite{RNZ1, RNZ2, Hazrat_Zhang_multiple, RNZ5, Stepanov_universal}
address generalisations to other groups, such as Chevalley groups
and Bak's unitary groups. These results are systematically described 
in our surveys and conference papers
\cite{yoga-1, Porto-Cesareo, yoga-2, Hazrat_Vavilov_Zhang}.
\par
One of the pivotal results of our theory, initially established in a
somewhat weaker form by Hazrat and the second author in
\cite{Hazrat_Zhang_multiple}, and then stated in a more precise
form in our joint papers \cite{Hazrat_Vavilov_Zhang_generation, 
Hazrat_Vavilov_Zhang}, is a description of generators of 
$[E(n,R,A),E(n,R,B)]$ as a subgroup. In those papers it was proven
over quasi-finite rings, and involved three types of generators,
the Stein---Tits--Vaserstein generators $t_{ji}(c)t_{ij}(ab)t_{ji}(-c)$
and $t_{ji}(c)t_{ij}(ba)t_{ji}(-c)$, the elementary commutators
$[t_{ij}(a)t_{ji}(b)]$, where in both cases $i\neq j$, 
$a\in A$, $b\in B$, $c\in R$, and also a third type of generators.
\par
In 2018 we observed that the generators of the third type were
redundant, see, \cite{NV18,NZ1}. Then in 2019 we noticed
that even the elementary commutators should be only taken 
for one root, and finally that this generation result holds
over {\it arbitrary\/} associative rings, see \cite{NZ2}, 
Theorem~1 (which we reproduce below as Lemma~\ref{l3}). This 
proof is then repeated in a slightly more transparent form 
in \cite{NZ3}, \S~5, as part of the proof of the elementary 
multiple commutator formula 
\par
In turn, our proof of that result hinges on the following result,
which is a stronger, and more precise version of \cite{NZ2}, 
Lemma 5. In this form, it is proven as \cite{NZ3}, Lemma 11.
Here we denote by $A\circ B=AB+BA$ the symmetrised product 
of two-sided ideals $A$ and $B$. For commutative rings, $A\circ B=AB=BA$ is the usual product of ideals $A$ and $B$. 

\begin{oldtheorem}
Let $R$ be an associative ring with $1$, $n\ge 3$, and let $A,B$ 
be two-sided ideals of $R$. Then for any  $1\le i\neq j\le n$, any
$1\le k\neq l\le n$, and all $a\in A$, $b\in B$, $c\in R$, one has
$$ y_{ij}(ac,b)\equiv  y_{kl}(a,cb) \pamod{E(n,R,A\circ B)}. $$
\end{oldtheorem}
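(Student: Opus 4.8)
The plan is to realise both sides of the congruence as iterated commutators of three elementary transvections and then to invoke the ``three subgroups''/Hall--Witt mechanism, the whole point being that the third iterated commutator arising there automatically falls into $E(n,R,A\circ B)$.

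First I would fix an index $h$ distinct from $i$ and $j$, which is possible since $n\ge 3$. Using the Chevalley commutator formulae for distinct indices, namely $[t_{pq}(x),t_{qr}(y)]=t_{pr}(xy)$ and $[t_{pq}(x),t_{rs}(y)]$ trivial when $q\neq r$ and $p\neq s$, and writing $U=t_{ih}(a)$, $V=t_{hj}(c)$, $W=t_{ji}(b)$, one gets $[U,V]=t_{ij}(ac)$, $[V,W]=t_{hi}(cb)$ and $[W,U]=t_{jh}(ba)$. Recalling that $y_{pq}(x,y)=[t_{pq}(x),t_{qp}(y)]$, the left-hand side is thus $y_{ij}(ac,b)=[[U,V],W]$, whereas $[U,[V,W]]=[t_{ih}(a),t_{hi}(cb)]=y_{ih}(a,cb)$ is exactly the right-hand side in the case $(k,l)=(i,h)$. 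So it is enough to compare $[[U,V],W]$ with $[U,[V,W]]$ modulo $E(n,R,A\circ B)$ and then to propagate the result along root pairs.

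For the comparison I would apply the Hall--Witt identity to $U,V,W$; this is the sharpened ``three subgroups lemma at the level of ideals'' promised in the introduction. It writes a product of three conjugated iterated commutators built from $[[U,V^{-1}],W]$, $[[V,W^{-1}],U]$ and $[[W,U^{-1}],V]$ as the identity. Now the third ingredient is $[[W,U^{-1}],V]=[t_{jh}(-ba),t_{hj}(c)]$, and since $ba\in BA\subseteq A\circ B$ the transvection $t_{jh}(-ba)$ already lies in $E(n,R,A\circ B)$; as $E(n,R,A\circ B)$ is normal in $E(n,R)$, so does this commutator and any of its $E(n,R)$-conjugates. Hence modulo $E(n,R,A\circ B)$ the Hall--Witt identity collapses to the assertion that a product of two conjugates, one of $y_{ij}(-ac,b)$ and one of $y_{hi}(-cb,a)$, is trivial; rewriting $y_{hi}(-cb,a)=y_{ih}(a,-cb)^{-1}$ this should give $y_{ij}(ac,b)\equiv y_{ih}(a,cb)\pamod{E(n,R,A\circ B)}$, once the conjugations and the sign changes (from $-ac$ to $ac$ and from $-cb$ to $cb$) are seen to be harmless modulo $E(n,R,A\circ B)$.

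Handling these conjugations and sign changes is the technical heart, and the step I expect to be the main obstacle. The mechanism is that every $y_{pq}(x,y)$ lies in $[E(n,R,A),E(n,R,B)]\le\GL(n,R,A\circ B)$, combined with the standard commutator formula $[E(n,R),\GL(n,R,A\circ B)]\subseteq E(n,R,A\circ B)$ recalled among the auxiliary results: conjugating $y_{pq}(x,y)$ by an element of $E(n,R)$ does not change its coset modulo $E(n,R,A\circ B)$, and the same formula absorbs the differences between $y_{ij}(-ac,b)$ and $y_{ij}(ac,b)^{-1}$ and between $y_{ih}(a,-cb)$ and $y_{ih}(a,cb)^{-1}$. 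Once $y_{ij}(ac,b)\equiv y_{ih}(a,cb)$ holds for every admissible third index $h$, the general statement follows by propagation: specialising $c=1$ gives $y_{ij}(a,b)\equiv y_{ih}(a,b)$ for all $h\neq i$ (the case $h=j$ being trivial), and applying this to the pair $(j,i)$ together with $y_{pq}(x,y)^{-1}=y_{qp}(y,x)$ gives $y_{ij}(a,b)\equiv y_{hj}(a,b)$ for all $h\neq j$; since $n\ge 3$, one reaches any pair $(k,l)$ from $(i,j)$ by first changing the column index and then the row index, with a spare index available at each step. Combining, $y_{ij}(ac,b)\equiv y_{ih}(a,cb)\equiv y_{kl}(a,cb)\pamod{E(n,R,A\circ B)}$.
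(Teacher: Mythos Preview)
Your Hall--Witt approach is sound and is in fact the alternative route the authors explicitly mention in the introduction (``Alternatively, one could use the Hall--Witt identity''). The paper itself does not reprove Theorem~A here; it proves the stronger three-ideal Theorem~1 by the Mennicke-style expansion: rewrite $y_{ij}(ab,c)=t_{ij}(ab)\cdot{}^{t_{ji}(c)}[t_{ih}(a),t_{hj}(-b)]$, distribute the conjugation, and expand the resulting commutators by multiplicativity. Your argument and the paper's are morally the same identity viewed from two sides; your route is cleaner conceptually (the ``bad'' factor $y_{jh}(-ba,c)$ visibly dies because $ba\in A\circ B$), while the paper's direct expansion is what actually yields the sharper triple congruence of Theorem~1.

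There is, however, one genuine gap in your justification. To dispose of the extraneous conjugations and sign changes you invoke the absolute standard commutator formula $[E(n,R),\GL(n,R,A\circ B)]\subseteq E(n,R,A\circ B)$, claiming it is ``recalled among the auxiliary results''. It is not: over an \emph{arbitrary} associative ring this formula is not known, and the whole thrust of \S8 is that only the \emph{stable} version survives (Theorem~5, Lemma~13). What you actually need---and what the paper uses in the analogous place in its proof of Theorem~1---is Lemma~14: each $y_{pq}(x,y)$ only involves the two indices $p,q$, hence sits in an embedded $\GL(n-1,R,A\circ B)$, and Lemma~13 then gives ${}^g y_{pq}(x,y)\equiv y_{pq}(x,y)\pmod{E(n,R,A\circ B)}$ for any $g\in E(n,R)$. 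The sign changes $y_{pq}(-x,y)\equiv y_{pq}(x,y)^{-1}$ likewise follow from Lemma~14 via additivity (Lemma~15), not from the absolute formula. Once you replace that one appeal, your argument goes through; but as written it rests on a result that is unavailable in the generality of the statement.
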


During the talks  ``commutators of relative and unrelative 
elementary subgroups'' presenting our papers \cite{NZ2}--\cite{NZ5}, both at the algebraic group seminar at Chebyshev Lab on October 22
(see http://chebyshev.spbu.ru/en/schedule/?week=1571605200),
and the at the 
Conference ``Homological algebra, ring theory and Hochschild cohomology'' at EIMI on October 29 (see http://www.pdmi.ras.ru/EIMI/2019/CR/index.html) the first author was invariably writing 
$a\in A$, $b\in B$, $c\in C$. During the second talk, Pavel 
Kolesnikov, who was keeping notes, asked, what was that $C$? 
After a little thinking, we decided that $C$ should
be $C\unlhd R$ here, rather than $R$ itself. Of course, modulo
a smaller subgroup $E(n,R,ABC)$ the elementary commutator 
in the left hand side will be congruent to a product of {\it two\/} 
other elementary commutators, rather than to a single such
commutator, as in the absolute case. 

\begin{theorem}\label{t1}
Let $R$ be an associative ring with $1$, $n\ge 3$, and let $A,B,C$ 
be two-sided ideals of $R$. Then for any  three distinct indices $i,j,h$
such that $1\le i,j,h\le n$, and all $a\in A$, $b\in B$, $c\in C$, 
one has
$$ y_{ij}(ab,c) y_{jh}(ca,b) y_{hi}(bc,a)\equiv 1 
\pamod{E(n,R,ABC+BCA+CAB)}. $$
\end{theorem}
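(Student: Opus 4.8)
The plan is to recognise the three elementary commutators on the left-hand side as the left-normed triple commutators occurring in the Hall--Witt identity, and then to dispose of the resulting correction terms by a direct computation with elementary transvections. To set up, put $X=t_{ih}(a)$, $Y=t_{hj}(b)$ and $Z=t_{ji}(c)$, so that $X\in E(n,R,A)$, $Y\in E(n,R,B)$ and $Z\in E(n,R,C)$. Since $i,j,h$ are pairwise distinct, the commutator relations among elementary transvections give the \emph{exact} equalities $[X,Y]=t_{ij}(ab)$, $[Z,X]=t_{jh}(ca)$ and $[Y,Z]=t_{hi}(bc)$; since $y_{pq}(x,z)=[t_{pq}(x),t_{qp}(z)]$, this yields
$$ y_{ij}(ab,c)=\big[[X,Y],Z\big],\qquad y_{jh}(ca,b)=\big[[Z,X],Y\big],\qquad y_{hi}(bc,a)=\big[[Y,Z],X\big]. $$
Thus the left-hand side of the theorem is precisely the ``cyclic Jacobi product'' $\big[[X,Y],Z\big]\,\big[[Z,X],Y\big]\,\big[[Y,Z],X\big]$, the group-theoretic counterpart of $[[X,Y],Z]+[[Z,X],Y]+[[Y,Z],X]=0$. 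Over an arbitrary associative ring this need not be trivial: the three factors have entries in $ABC+CAB$, $CAB+BCA$ and $BCA+ABC$ respectively, and so lie in the corresponding \emph{general} linear congruence subgroups, but in this generality there is no relative commutator formula placing $[E(n,R,I),E(n,R,J)]$ inside $E(n,R,I\circ J)$, and the content of the theorem is that the product of the three nevertheless lands in the \emph{elementary} subgroup $E(n,R,ABC+BCA+CAB)$.

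Next I would apply the Hall--Witt identity $\big[[X,Y^{-1}],Z\big]^{Y}\cdot\big[[Y,Z^{-1}],X\big]^{Z}\cdot\big[[Z,X^{-1}],Y\big]^{X}=1$ (with the conventions $u^{v}=v^{-1}uv$ and $[u,v]=uvu^{-1}v^{-1}$), which holds in any group. By the commutator relations the inner commutators are again single transvections, $[X,Y^{-1}]=t_{ij}(-ab)$ and cyclically, so this identity reads, exactly,
$$ y_{ij}(-ab,c)^{\,t_{hj}(b)}\cdot y_{hi}(-bc,a)^{\,t_{ji}(c)}\cdot y_{jh}(-ca,b)^{\,t_{ih}(a)}=1. $$
It therefore suffices to prove that, modulo $E(n,R,ABC+BCA+CAB)$, one has $y_{ij}(-ab,c)^{t_{hj}(b)}\equiv y_{ij}(ab,c)^{-1}$ together with its two cyclic analogues: granting this, the Hall--Witt relation collapses to $y_{ij}(ab,c)^{-1}\,y_{hi}(bc,a)^{-1}\,y_{jh}(ca,b)^{-1}\equiv 1$, and, since $E(n,R,ABC+BCA+CAB)$ is normal in $E(n,R)$, taking inverses and rotating the product cyclically gives the statement of the theorem.

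Each of the three congruences is obtained by the same device. Writing $y_{ij}(-ab,c)=\big(y_{ij}(ab,c)^{-1}\big)^{t_{ij}(ab)}$, one sees that reversing the sign of the first argument amounts to conjugating by the transvection $t_{ij}(ab)$ of level $AB$, whereas stripping off the outer conjugation by $t_{hj}(b)$ is, by definition, a matter of peeling off a commutator; hence $y_{ij}(-ab,c)^{t_{hj}(b)}$ differs from $y_{ij}(ab,c)^{-1}$ only by a product of commutators of the form $\big[\,y_{pq}(\cdot,\cdot),\,t_{rs}(w)\,\big]$, where $y_{pq}$ is one of the three elementary commutators above and $w$ belongs to one of $A$, $B$, $C$, $AB$, $BC$, $CA$. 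Rewriting the first argument of each such $y_{pq}$ through the third index --- $t_{ij}(ab)=[t_{ih}(a),t_{hj}(b)]$, and cyclically --- every one of these commutators becomes an iterated commutator of elementary transvections in which each of $A$, $B$ and $C$ occurs at least once, and what must be shown is that each one lies in $E(n,R,ABC+BCA+CAB)$. This I would check by a direct, if laborious, matrix computation, which can be cross-checked with \Math; for example $\big[\,y_{ij}(ab,c),\,t_{hj}(b')\,\big]$ works out to a product $t_{hi}(w_1)\,t_{hj}(w_2)$ of two elementary transvections with $w_1,w_2\in BCA$, hence lies in $E(n,R,BCA)$.

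The main obstacle is exactly this last computation. The discrepancy commutators $\big[\,y_{pq}(\cdot,\cdot),\,t_{rs}(w)\,\big]$ in which $t_{rs}$ carries an index outside the $2\times2$ block of $y_{pq}$ are benign: the third index leaves room to fold them back into honest elementary transvections of the appropriate cyclic level, as in the displayed example. The delicate case is that of the corrections produced by the sign reversals, such as $\big[\,y_{ij}(ab,c)^{-1},\,t_{ij}(ab)\,\big]$, which a priori lie entirely inside the $\{i,j\}$-block and so seem to leave no room to manoeuvre; handling these is where $n\ge3$ must genuinely be used, rewriting an $\SL_2$-block element with entries in $ABC+CAB$ as a product of elementary transvections indexed by $\{i,j,h\}$. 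Finally, since the entries of the three blocks $y_{ij}(ab,c)$, $y_{jh}(ca,b)$ and $y_{hi}(bc,a)$ together span precisely $ABC+BCA+CAB$ and no smaller ideal, it is at this step that the symmetrised triple product $ABC+BCA+CAB$, rather than any one of its cyclic summands, is forced as the modulus.
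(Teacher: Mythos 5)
Your overall strategy --- recognise the three elementary commutators as the Jacobi-type triple commutators of $X=t_{ih}(a)$, $Y=t_{hj}(b)$, $Z=t_{ji}(c)$ and invoke Hall--Witt --- is viable and is explicitly flagged in the paper as an alternative to its actual proof (which instead expands $y_{ij}(ab,c)=t_{ij}(ab)\cdot{}^{t_{ji}(c)}[t_{ih}(a),t_{hj}(-b)]$ by multiplicativity of commutators). However, the pivot of your argument, the displayed identity
$$ \big[[X,Y^{-1}],Z\big]^{Y}\cdot\big[[Y,Z^{-1}],X\big]^{Z}\cdot\big[[Z,X^{-1}],Y\big]^{X}=1, $$
is \emph{false} with the conventions you (and the paper) adopt, namely $[u,v]=uvu^{-1}v^{-1}$ and $u^{v}=v^{-1}uv$. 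You have transplanted verbatim the standard form of Hall--Witt that is valid for the convention $[u,v]=u^{-1}v^{-1}uv$; changing the convention changes the placement of the inverses and of the conjugating elements. The failure is concrete: for $n=3$, $a=b=c=1$ over $\Int$, the product $y_{12}(-1,1)^{t_{32}(1)}\,y_{31}(-1,1)^{t_{21}(1)}\,y_{23}(-1,1)^{t_{13}(1)}$ has $(1,1)$-entry equal to $3$, not $1$. The correct form for these conventions is the one displayed in \S~2.1 of the paper, $[x,y^{-1},z^{-1}]^x\cdot[z,x^{-1},y^{-1}]^z\cdot[y,z^{-1},x^{-1}]^y=1$, which with your $X,Y,Z$ yields the exact identity
$$ y_{ij}(-ab,-c)^{t_{ih}(a)}\cdot y_{jh}(-ca,-b)^{t_{ji}(c)}\cdot y_{hi}(-bc,-a)^{t_{hj}(b)}=1, $$
and with this replacement the remainder of your reduction goes through unchanged.

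A second, lesser point: the ``laborious matrix computation'' you propose for disposing of the conjugations and sign reversals is both unnecessary and not actually carried out. Once the corrected Hall--Witt identity is in place, everything you need is already in the paper: Lemma~\ref{l14} (applied with the ideals $AB$ and $C$, etc.) says that conjugating $y_{ij}(u,c)$, $u\in AB$, $c\in C$, by \emph{any} element of $E(n,R)$ changes it only by an element of $E(n,R,(AB)\circ C)\le E(n,R,ABC+BCA+CAB)$, and Lemma~\ref{l15} handles the sign changes $y_{ij}(-ab,-c)\equiv y_{ij}(ab,c)$ modulo the same subgroup; the three congruence moduli $E(n,R,AB\circ C)$, $E(n,R,CA\circ B)$, $E(n,R,BC\circ A)$ multiply out to exactly $E(n,R,ABC+BCA+CAB)$. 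Your identity $y_{ij}(-ab,c)=\big(y_{ij}(ab,c)^{-1}\big)^{t_{ij}(ab)}$ is correct and is a clean way to see that sign reversal in the first argument is itself an inner conjugation, hence also covered by Lemma~\ref{l14}; but the worry about the ``delicate'' in-block corrections is a red herring for the same reason. So: repair the form of Hall--Witt, and replace the sketched computations by citations of Lemmas~\ref{l14} and~\ref{l15}, and you obtain a complete proof genuinely different in mechanism from the one in the text.
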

Observe, that the ideal defining the congruence module is
precisely 
$$ ABC+BCA+CAB=A\circ BC+ B\circ CA+C\circ AB. $$
\par
This can be easily established in the same style as in our 
recent papers  \cite{NZ2}--\cite{NZ5}. Morally, it is essentially
the same computation by Mennicke \cite{Mennicke} 
in the form given to it by Bass---Milnor---Serre in \cite{Bass_Milnor_Serre}, Theorem 5.4. Subsequently, it was
used in virtually each and every paper on bounded generation. Alternatively, one could use the Hall---Witt
identity. However, everywhere before $C=R$ so that the last 
factor becomes trivial. 
\par 
Theorem 1, and its immediate consequence Theorem 6,
the three ideals lemma 
$$ [E(n,AB),E(n,C)] \le [E(n,BC),E(n,A)]\cdot [E(n,CA),E(n,B)], $$
\par\noindent
are the high points of the present paper, the rest are either 
preparations to their proof, or corollaries of the above  
{\it trirelative\/} congruence. However, a number of 
intermediate results generalise classical results and are decidedly
interesting in themselves. Let us list other principal results of the 
present paper.
\par\smallskip
$\bullet$ Theorem 2: generators of partially relativised elementary subgroup $E(n,B,A)$, where $A,B\unlhd R$, generalising a classical result by Stein---Tits---Vaserstein.
\par\smallskip
$\bullet$ Theorem 3: reduced set of generators of $E(n,R,A)$, in terms
of the unipotent radicals of two opposite parabolic subgroups, generalising
results by van der Kallen and Stepanov.
\par\smallskip
$\bullet$ Theorem 4: the three ideals lemma for partially relativised
subgroups $E(n,A,BC)$.
\par\smallskip
$\bullet$ Theorem 5: a stable version of the standard commutator formula,
$$ [\GL(n-1,R,A),E(n,R,B)]=[E(n,A), E(n,B)], $$
\noindent
for arbitrary associative rings.
\par\smallskip
$\bullet$ Propositions 2 and 3: new inclusions for multiple elementary commutators.
\par\smallskip
$\bullet$ Theorem 7: a complete description of the generic lattice of inclusions
among $[E(n,I^r),E(n,I^s)]$, for powers of one ideal $I\unlhd R$.
\par\smallskip
$\bullet$ Proposition 5: inclusion
$[E(n,A+B),E(n,A\cap B)]\le [E(n,A),E(n,B)]$.
\par\smallskip
$\bullet$ Proposition 6: an explicit example, where $E(n,R,A\cap B)$ is
strictly smaller than $E(n,R,A)\cap E(n,R,B)$. 
\par\smallskip
The paper is organised as follows. In \S~2 and \S~3 we review
some notation and briefly recall the requisite facts on elementary 
subgroups in $\GL(n,R)$. The next six sections are of a technical 
nature, they develop technical tools for the rest of the present 
paper. Namely, in \S~4 we consider partially relativised elementary subgroups $E(n,B,A)$ and prove Theorem 2, which gives their sets 
of generators. In \S~5 we recall some basic facts on intersections 
of parabolic subgroups with congruence subgroups, after which 
in \S~6 we establish Theorem 3, which is a further strengthening 
of results by van der Kallen and Stepanov, generation of $E(n,R,A)$ 
in terms of unipotent radicals of two opposite parabolic subgroups. 
In \S~7 we prove a toy version of our main results, the three
ideals lemma for partially relativised elementary groups
$E(n,C,AB)$, Theorem~4. In \S~8 we establish Theorem 5,
which is a stable version of the standard commutator formula, 
valid for all associative rings. In \S~9 we discuss the important 
special case, behaviour of elementary commutators modulo 
relative elementary subgroups. The core of the present paper 
is \S~10, where we 
prove Theorem 1 and using that derive an inclusion among 
birelative commutators, Theorem 6. This is kind 
of a three ideals lemma, to be used in all subsequent results.
The balance of this paper is dedicated to applications. In \S~11
we consider an application to the only outstanding case in 
our multiple elementary commutator paper
\cite{NZ3}, quadruple commutators in $\GL(3,R)$, and obtain 
some new inclusions among multiple elementary commutator subgroups.
In \S~12 we obtain definitive results for the crucial case of 
the powers of one ideal and prove Theorem 7. These results 
will be instrumental in the sequel of the present paper dedicated 
to the case of Dedekind rings.
In \S~13 we compare the commutator of two elementary
subgroups of levels $A$ and $B$ with the commutator of 
elementary subgroups of levels $A\cap B$ and $A+B$.
In \S~14 we construct a counter-example concerning 
intersections of relative elementary subgroups.
Finally, in \S~15 we make some further related observations,
and state some unsolved problems.
\par
Initially, we planned to include in this paper also explicit
computations over Dede\-kind rings. But then we realised
that the topic is so extensive that it would be more appropriate
to publish those results separately.


\section{Notation}

\subsection{Commutators}
Let $G$ be a group. A subgroup $H\le G$ generated by a
subset $X\subseteq G$ will be denoted by $H=\langle X\rangle$.
For two elements $x,y\in G$ we denote by ${}^xy=xyx^{-1}$
and $y^x=x^{-1}yx$ the left and right conjugates of $y$ by $x$,
respectively. Further, we denote by 
$$ [x,y] = xyx^{-1}y^{-1}={}^xy\cdot y^{-1}=x\cdot{}^y{x^{-1}} $$
\noindent
the left-normed commutator of $x$ and $y$. Our multiple 
commutators are also left-normed. Thus, by default, $[x,y,z]$
denotes $[[x,y],z]$, we will use different notation for other
arrangement of brackets. Throughout the present paper we
repeatedly use the customary commutator identities, such as 
their multiplicativity with respect to the factors:
$$ [x,yz]=[x,y]\cdot{}^y[x,z],\qquad [xy,z]={}^x[y,z]\cdot[x,z], $$
\noindent
and a number of other similar identities, such as 
$${ [x,y]}^{-1}=[y,x],\qquad {}^z[x,y]=[{}^zx,{}^zy],\qquad 
[x^{-1},y]=[y,x]^x,\qquad [x,y^{-1}]=[y,x]^y, $$
\noindent
usually without any specific reference.
Iterating multiplicativity we see that the commutator 
$[x_1\ldots x_m,y]$ is the product of conjugates of the 
commutators $[x_i,y]$, $i=1,\ldots,m$. Obviously, a similar
claim holds also for $[x,y_1\ldots y_m]$.
\par
Further, for two subgroups $F,H\le G$ one denotes by $[F,H]$ 
their mutual commutator subgroup, spanned by all commutators 
$[f,h]$, where $f\in F$, $h\in H$. Clearly, $[F,H]=[H,F]$, and 
if $F,H\unlhd G$ are normal in $G$, then $[F,H]\unlhd G$ is also 
normal. Similarly, for $F,H,K\le G$ three subgroups of $G$ their
triple commutator $[F,H,K]$ is spanned by $[f,h,k]$, where
$f\in F$, $h\in H$ and $k\in K$. We will use the following version
of the three subgroups lemma.

\begin{lemma}\label{l1}
If $F,H,K\le G$ be three subgroups of $G$. Assume
that two of the subgroups $[F,H,K]$, $[H,K,F]$, $[K,F,H]$ 
are normal in $G$. Then the third of them is also normal and
$$ [F,H,K]\le [H,K,F]\cdot [K,F,H]. $$ 
\end{lemma}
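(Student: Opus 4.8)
The plan is to derive both assertions from the Hall--Witt identity, the classical device for results of this kind. Recall that in an arbitrary group one has, for all elements $a,b,c$, an identity which in one of its standard forms reads
$$ [[a,b^{-1}],c]^{b}\cdot[[b,c^{-1}],a]^{c}\cdot[[c,a^{-1}],b]^{a}=1, $$
its three factors being the three cyclic double commutators of $a,b,c$, each conjugated by just one of the three elements. I would substitute $a\in F$, $b\in H$, $c\in K$; since $F$, $H$, $K$ are subgroups, the inverses $b^{-1}$, $c^{-1}$, $a^{-1}$ again lie in $H$, $K$, $F$, so that the first factor is an element of $[[F,H],K]=[F,H,K]$ conjugated by an element of $H$, the second an element of $[H,K,F]$ conjugated by an element of $K$, and the third an element of $[K,F,H]$ conjugated by an element of $F$.

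After a cyclic relabelling of $F$, $H$, $K$ — which merely permutes the three triple commutators — I may assume that the two subgroups presumed normal are $[H,K,F]$ and $[K,F,H]$. Put $N:=[H,K,F]\cdot[K,F,H]$; being a product of two normal subgroups, $N$ is normal in $G$. Then the second factor above lies in $[H,K,F]^{K}=[H,K,F]\le N$ and the third in $[K,F,H]^{F}=[K,F,H]\le N$, so the remaining factor, as the inverse of their product, also lies in $N$; and since $N$ is normal in $G$ the conjugation by $b$ may be stripped off, leaving $[[a,b^{-1}],c]\in N$ for all $a\in F$, $b\in H$, $c\in K$. As $b$ runs over $H$ so does $b^{-1}$, so all the generators $[[f,h],k]$ of $[F,H,K]=[[F,H],K]$ lie in $N$; since $N$, being normal in $G$, is in particular invariant under conjugation by $[F,H]$, it contains the whole of $[F,H,K]$. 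This is the asserted inclusion $[F,H,K]\le[H,K,F]\cdot[K,F,H]$. One can phrase this more slickly: pass to $\bar G=G/N$, where two of the three triple commutators become trivial, and invoke the classical three subgroups lemma to get $[\bar F,\bar H,\bar K]=1$, i.e. $[F,H,K]\le N$.

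For the normality of the third subgroup, observe that the inclusion just proved applies equally to any $G$-conjugate of the triple: for $g\in G$ one has $[H^{g},K^{g},F^{g}]=[H,K,F]$ and $[K^{g},F^{g},H^{g}]=[K,F,H]$, still normal in $G$, so $[F,H,K]^{g}=[F^{g},H^{g},K^{g}]\le N$ for every $g$, and the normal closure of $[F,H,K]$ thus lies in $N$. I expect the genuinely delicate point of the proof to be exactly this last step — promoting ``every conjugate of $[F,H,K]$ lies in the normal subgroup $N$'' to ``$[F,H,K]$ itself is normal in $G$''; note that in every situation in which we shall apply the lemma the subgroups $F$, $H$, $K$ are themselves normal in $G$, which makes all three triple commutators normal at the outset and reduces the content of the lemma to the inclusion. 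The rest is routine commutator calculus, the only thing requiring attention being the bookkeeping in the first two paragraphs: matching each factor of the Hall--Witt identity with the correct conjugating subgroup, so that precisely the two normality hypotheses are what is needed to absorb those conjugations.
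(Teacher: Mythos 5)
Your proof of the inclusion is correct and is exactly the route the paper intends: the lemma is stated without proof, but the remark following it points to the Hall--Witt identity ``in the correct form, [where] the only conjugations occur outside of the commutators'', which is precisely the form you use. Substituting subgroup elements, absorbing two of the three external conjugations into the assumed normality of $[H,K,F]$ and $[K,F,H]$, stripping the remaining conjugation off the first factor because $N=[H,K,F]\cdot[K,F,H]$ is normal, and then passing from the generators $[[f,h],k]$ to all of $[[F,H],K]$ (or, more cleanly, quoting the classical three subgroups lemma in $G/N$) is sound and complete for the displayed inclusion.

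The normality assertion, which you rightly single out as the delicate point and do not actually prove, is in fact false as stated, so no amount of care with the last step can close that gap. Take $G=S_4$, $F=\langle(13)\rangle$, $H=\langle(12)(34)\rangle$, $K=\langle(12)\rangle$. Then $[H,K]=1$, so $[H,K,F]=1$ is normal; $[K,F]=\langle(123)\rangle$ and $[K,F,H]=V$, the Klein four-group, is normal; but $[F,H]=\langle(13)(24)\rangle$ and $[F,H,K]=\langle(12)(34)\rangle$, which is not normal in $S_4$ (though it does lie in $1\cdot V$, as the inclusion predicts; note also $\langle F,H,K\rangle=S_4$, so weakening ``normal in $G$'' does not help). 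Your closing observation is therefore the essential one: in every application in this paper (Propositions~\ref{p1} and~\ref{p2}) all three triple commutators are of the form $[E(n,X),E(n,Y)]$ and hence normal in $E(n,R)$ by Lemma~\ref{l4}, so only the inclusion is ever used. The normality clause should be deleted from the statement, or replaced by the hypothesis that all three triple commutators are normal.
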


Often times, elementary textbooks needlessly assume
that the subgroups $F,H,K$ themselves are normal in $G$. This
depends, of course, on the exact form of the Hall---Witt identity
one is using. In the correct form, the only conjugations occur
outside of the commutators, one such form is
$$ [x,y^{-1}, z^{-1}]^x\cdot [z, x^{-1}, y^{-1}]^z\cdot
[y,z^{-1},x^{-1}]^y = 1. $$

\subsection{General linear group}
Let $R$
be an associative ring with 1, $R^*$ be the multiplicative
group of the ring $R$. For two natural numbers $m,n$
we denote by $M(m,n,R)$ the additive group of $m\times n$-matrices
with entries in $R$. By $M(n,R)=M(n,n,R)$
we denote the full matrix ring of degree $n$ over $R$.
\par
Let $G=\GL(n,R)=M(n,R)^*$ be the general linear group of degree 
$n$ over $R$. In the sequel for a matrix
$g\in G$ we denote by $g_{ij}$ its matrix entry in the position
$(i,j)$, so that $g=(g_{ij})$, $1\le i,j\le n$. The inverse of
$g$ will be denoted by $g^{-1}=(g'_{ij})$, $1\le i,j\le n$.
\par
As usual we denote by $e$ the identity matrix of degree $n$
and by $e_{ij}$ a standard matrix unit, i.~e., the matrix
that has 1 in the position $(i,j)$ and zeros elsewhere.
An elementary transvection $t_{ij}(\xi)$ is a matrix of the
form $t_{ij}(c)=e+c e_{ij}$, $1\le i\neq j\le n$, $c\in R$. 
\par
Further, let $A$ be a two-sided of $R$. We consider the
corresponding reduction homomorphism
$$ \rho_A:\GL(n,R)\map\GL(n,R/A),\quad
(g_{ij})\mapsto(g_{ij}+A). $$
\noindent
Now, the {\it principal congruence subgroup} $\GL(n,R,A)$ of level $A$
is the kernel $\rho_A$, 
\par
For a commutative ring $R$ we denote by $\SL(n,R)$ the
corresponding general linear group. All other subgroups
are interpreted similarly. Thus, for instance, the principal 
congruence subgroup $\SL(n,R,A)$ is defined as 
$\SL(n,R,A)=\GL(n,R,A)\cap\SL(n,R)$.


\section{Generation of relative elementary subgroups}

The {\it unrelative elementary subgroup} $E(n,A)$ of level $A$
in $\GL(n,R)$ is generated by all elementary matrices of level 
$A$. In other words,
$$ E(n,A)=\langle e_{ij}(a),\ 1\le i\neq j\le n,\ a\in A \rangle. $$
\noindent
In general $E(n,A)$ has little chances to be normal in $\GL(n,R)$. 
The {\it relative elementary subgroup} $E(n,R,A)$  of level $A$
is defined as the normal closure of $E(n,A)$ in the absolute 
elementary subgroup $E(n,R)$:
$$ E(n,R,A)=\langle e_{ij}(a),\ 1\le i\neq j\le n,\ a\in A 
\rangle^{E(n,R)}. $$

The following lemma on generation of relative elementary subgroups
$E(n,R,A)$ is a classical result discovered in various contexts by 
Stein, Tits and Vaserstein, see, for instance, \cite{Vaserstein_normal}
(or \cite{Hazrat_Vavilov_Zhang}, Lemma 3, for a complete elementary
proof). It is stated in terms of the {\it Stein---Tits---Vaserstein 
generators\/}):
$$ z_{ij}(a,c)=t_{ij}(c)t_{ji}(a)t_{ij}(-c),\qquad
1\le i\neq j\le n,\quad a\in A,\quad c\in R. $$

\begin{lemma}\label{l2}
Let $R$ be an associative ring with $1$, $n\ge 3$, and let $A$ 
be a two-sided ideal of $R$. Then as a subgroup $E(n,R,A)$ is 
generated by $z_{ij}(a,c)$, for all $1\le i\neq j\le n$, $a\in A$, 
$c\in R$.
\end{lemma}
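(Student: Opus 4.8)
The plan is to establish the two inclusions separately. Put $H=\langle z_{ij}(a,c)\mid 1\le i\ne j\le n,\ a\in A,\ c\in R\rangle$. Since $z_{ij}(a,c)={}^{t_{ij}(c)}t_{ji}(a)$ with $t_{ij}(c)\in E(n,R)$ and $t_{ji}(a)\in E(n,A)$, every generator of $H$ lies in $E(n,R,A)$, so $H\le E(n,R,A)$; this is the easy half. For the converse, observe that $z_{ij}(a,0)=t_{ji}(a)$, whence $E(n,A)\le H$; since $E(n,R,A)$ is, by definition, the normal closure of $E(n,A)$ in $E(n,R)$, it suffices to show that $H\unlhd E(n,R)$. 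As $E(n,R)$ is generated by the transvections $t_{pq}(r)$, as $H$ is generated by the $z_{ij}(a,c)$, and as both generating sets are stable under inversion ($t_{pq}(r)^{-1}=t_{pq}(-r)$ and $z_{ij}(a,c)^{-1}=z_{ij}(-a,c)$), normality of $H$ comes down to the single verification
\[ {}^{t_{pq}(r)}z_{ij}(a,c)\in H\qquad\text{for all }1\le p\ne q\le n,\ 1\le i\ne j\le n,\ a\in A,\ c\in R. \]

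I would first record an auxiliary fact to be used repeatedly: for every $a\in A$ one has ${}^{t_{pq}(r)}t_{ij}(a)\in H$, and hence the conjugate of any level-$A$ elementary matrix by a single transvection lies in $H$ as well. Indeed, by the standard relations among elementary transvections, if $(p,q)\ne(j,i)$ then $[t_{pq}(r),t_{ij}(a)]$ is either trivial or an elementary matrix of level $A$, so ${}^{t_{pq}(r)}t_{ij}(a)\in E(n,A)\le H$; while if $(p,q)=(j,i)$ then ${}^{t_{ji}(r)}t_{ij}(a)=z_{ji}(a,r)\in H$ outright. With this in hand I would run the main verification by cases, according to the position of $\{p,q\}$ relative to $\{i,j\}$. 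If $\{p,q\}\cap\{i,j\}=\emptyset$ then $t_{pq}(r)$ commutes with $t_{ij}(\pm c)$ and with $t_{ji}(a)$, so ${}^{t_{pq}(r)}z_{ij}(a,c)=z_{ij}(a,c)$. If $(p,q)=(i,j)$, a direct computation gives ${}^{t_{ij}(r)}z_{ij}(a,c)=z_{ij}(a,c+r)$. In the four mixed cases $(p,q)\in\{(i,h),(h,i),(j,h),(h,j)\}$ with $h\notin\{i,j\}$, repeated use of the standard transvection relations rewrites the conjugate as $w\cdot z_{ij}(a,c)$, where $w$ is a short product of level-$A$ elementary matrices — for instance ${}^{t_{ih}(r)}z_{ij}(a,c)=t_{ih}(-car)\,t_{jh}(-ar)\,z_{ij}(a,c)$ — so that $w\in E(n,A)\le H$ and the conjugate lies in $H$.

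The only remaining case, $(p,q)=(j,i)$, is the heart of the argument, and the one place where the hypothesis $n\ge 3$ is needed: here $t_{ji}(r)$ interacts with $z_{ij}(a,c)$ entirely inside the rank-one subgroup attached to the pair $\{i,j\}$, so no single transvection relation applies. The remedy is to fix a third index $h\notin\{i,j\}$ and to rewrite $z_{ij}(a,c)={}^{t_{ij}(c)}[t_{jh}(a),t_{hi}(1)]$; conjugating by $t_{ji}(r)$ and unwinding with the standard relations, one finds ${}^{t_{ji}(r)}z_{ij}(a,c)=[P,Q]$ with $P=t_{ih}(ca)\,t_{jh}(rca+a)$ a level-$A$ elementary matrix and $Q=t_{hj}(-c)\,t_{hi}(cr+1)$. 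Expanding this commutator by bilinearity, each factor that occurs is — up to conjugation by a short product of transvections, which after distributing reduces to the auxiliary fact and the non-hard cases for the $z_{kl}(\,\cdot\,,\,\cdot\,)$ — either an elementary matrix of level $A$, or a bracket $[t_{kl}(d),t_{lk}(s)]$ of two opposite transvections with one of $d,s$ lying in $A$; in the latter case $[t_{kl}(d),t_{lk}(s)]=t_{kl}(d)\cdot{}^{t_{lk}(s)}t_{kl}(-d)=t_{kl}(d)\,z_{lk}(-d,s)$ when $d\in A$, and symmetrically when $s\in A$, which lies in $H$. Arranging the expansion so that every generator $z_{kl}(\,\cdot\,,\,\cdot\,)$ appearing is only ever conjugated in one of the already-settled non-hard ways — never again the $(j,i)$-configuration — one concludes that $[P,Q]\in H$. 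The delicacy of this last bookkeeping, rather than any conceptual difficulty, is what I expect to be the main obstacle; everything else is routine commutator calculus.
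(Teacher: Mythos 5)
The paper does not actually prove Lemma~\ref{l2}: it is quoted as a classical result of Stein, Tits and Vaserstein, with the reader sent to \cite{Vaserstein_normal} and to \cite{Hazrat_Vavilov_Zhang}, Lemma~3, for a complete elementary proof. So there is no in-paper argument to compare against; judged on its own, your proof is correct and is in substance the standard one from those references. The reduction (show that $H=\langle z_{ij}(a,c)\rangle$ contains $E(n,A)$ via $z_{ij}(a,0)=t_{ji}(a)$ and is normalised by every single transvection), the auxiliary fact, the easy and mixed cases, and in particular the treatment of the only genuinely hard case $(p,q)=(j,i)$ by introducing a third index $h$ and writing $t_{ji}(a)=[t_{jh}(a),t_{hi}(1)]$ before conjugating, is exactly where the hypothesis $n\ge 3$ enters and is exactly how the classical proof goes; it is also the same device the paper itself uses later in the proofs of Lemma~\ref{l12}, Theorem~\ref{t2} and Theorem~\ref{t1}. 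I checked your explicit formulas: $P=t_{ih}(ca)\,t_{jh}(rca+a)$ and $Q=t_{hj}(-c)\,t_{hi}(cr+1)$ are both right, and expanding $[P,Q]$ by multiplicativity one does find that every factor is either a level-$A$ transvection, a bracket $[t_{kl}(d),t_{lk}(s)]=t_{kl}(d)\,z_{lk}(-d,s)$ with $d\in A$, or a conjugate of one of these by $t_{ih}(ca)$ or $t_{hj}(-c)$ --- and none of these conjugations recreates the forbidden $(j,i)$-configuration (the hard conjugators for the $z_{hj}$ and $z_{hi}$ that arise would be $t_{jh}$ and $t_{ih}$ applied in the transposed position, which never occur). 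The one criticism is that this last verification is asserted rather than carried out; since it is precisely the point where a circularity could hide, a complete write-up should display the expansion of $[P,Q]$ and name the conjugator acting on each $z$-factor. As it stands the argument is a correct and honest sketch of the classical proof, with the genuinely delicate step correctly identified.
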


The following result is a generalisation of Lemma 2 to mutual commutator subgroups $[E(n,R,A),E(n,R,B)]$ of relative elementary subgroups. a further type of generators occur, the
{\it elementary commutators\/}:
$$ y_{ij}(a,b)=[t_{ij}(a),t_{ji}(b)],\qquad
1\le i\neq j\le n,\quad a\in A,\quad b\in B. $$
\noindent
In slightly less precise forms, Theorem ~A was discovered by 
Roozbeh Hazrat and the second author, see \cite{Hazrat_Zhang_multiple}, Lemma 12 and then in our joint 
paper with Hazrat \cite{Hazrat_Vavilov_Zhang}, Theorem~3A. 
The strong form reproduced above was only established in our
paper \cite{NZ2}, Theorem~1, as an aftermath of our papers
\cite{NV18, NZ1}.

\begin{lemma}\label{l3}
Let $R$ be any associative ring with $1$, let $n\ge 3$, and let $A,B$ 
be two-sided ideals of $R$. Then the mixed commutator subgroup 
$[E(n,R,A),E(n,R,B)]$ is generated as a group by the elements of the form
\par\smallskip
$\bullet$ $z_{ij}(ab,c)=t_{ij}(c)t_{ji}(ab)t_{ij}(-c)$ and 
$z_{ij}(ba,c)=t_{ij}(c)t_{ji}(ba)t_{ij}(-c)$,
\par\smallskip
$\bullet$ $y_{ij}(a,b)=[t_{ij}(a),t_{ji}(b)]$,
\par\smallskip\noindent
where $1\le i\neq j\le n$, $a\in A$, $b\in B$, $c\in R$. Moreover,
for the second type of generators, it suffices to fix one pair of
indices $(i,j)$.
\end{lemma}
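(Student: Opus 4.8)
The plan is to cut $[E(n,R,A),E(n,R,B)]$ down to the listed generators in three moves, exploiting that both $E(n,R,A)$ and $E(n,R,B)$ are normal in $E(n,R)$ and, by Lemma~\ref{l2}, are generated as ordinary subgroups by their Stein--Tits--Vaserstein generators $z_{ij}(a,c)$, $z_{ij}(b,d)$. Since for normal subgroups $F=\langle X\rangle$ and $H=\langle Y\rangle$ of a group $G$ one has $[F,H]=\langle\,[x,y]:x\in X,\ y\in Y\,\rangle^{G}$, the subgroup $[E(n,R,A),E(n,R,B)]$ is the normal closure in $E(n,R)$ of the commutators $[z_{ij}(a,c),z_{kl}(b,d)]$. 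Pulling the conjugating transvections $t_{ij}(c)$, $t_{kl}(d)$ off these commutators (permitted, as the two subgroups are normal) and noting that ${}^{t_{ij}(-c)}t_{lk}(b)$ is a product of at most two transvections of level $\subseteq B$, the problem is reduced to showing that a suitable normal subgroup contains all the commutators $[t_{ji}(a),t_{lk}(b)]$ of bare transvections, $a\in A$, $b\in B$.

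These are evaluated by the standard Chevalley-commutator bookkeeping on how the index pairs overlap: the result is either trivial, or an elementary transvection of level $AB$ or $BA$ (so in $E(n,R,A\circ B)$), or, precisely when the two pairs are opposite, an elementary commutator $y_{ji}(a,b)$. Here $n\ge 3$ is used to write $t_{ji}(ab)=[t_{jh}(a),t_{hi}(b)]$ for an auxiliary index $h$; together with additivity of $t_{ji}(-)$ and the obvious $E(n,R)$-conjugations this produces all of $E(n,R,A\circ B)$, and by Lemma~\ref{l2} applied to $A\circ B$ --- again with the additivity of $z_{ij}(-,c)$ --- the elements $z_{ij}(ab,c)$ and $z_{ij}(ba,c)$ already generate $E(n,R,A\circ B)$ as a subgroup. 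Assembling these observations yields
$$ [E(n,R,A),E(n,R,B)]=\bigl\langle\,E(n,R,A\circ B),\ y_{ij}(a,b)\ :\ 1\le i\neq j\le n,\ a\in A,\ b\in B\,\bigr\rangle^{E(n,R)}, $$
so the group $H$ generated by \emph{all} the elements listed in the statement is exactly $\langle\,E(n,R,A\circ B),\ y_{ij}(a,b)\,\rangle$. What is left is to remove the outer normal closure, i.e. to show that this $H$ is already normal in $E(n,R)$; and, for the ``moreover'', to reduce the $y$'s to one pair of indices.

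The last point is the easy one: Theorem~A with $c=1$ gives $y_{ij}(a,b)\equiv y_{12}(a,b)\pamod{E(n,R,A\circ B)}$ for every pair $(i,j)$, so each $y_{ij}(a,b)$ lies in $\langle\,E(n,R,A\circ B),\ y_{12}(a,b):a\in A,b\in B\,\rangle$; hence once normality of $H$ is known, this smaller subgroup already equals $H$, and one fixed pair $(i,j)$ suffices. Normality of $H$ is tested on generators. Conjugation of an element of $E(n,R,A\circ B)$ by $t_{kl}(r)$ stays in $E(n,R,A\circ B)$, so it remains to see ${}^{t_{kl}(r)}y_{ij}(a,b)\in H$. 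If the index pairs $\{k,l\}$ and $\{i,j\}$ are disjoint, the transvections commute; if they share exactly one index, a one-line expansion by commutator multiplicativity writes ${}^{t_{kl}(r)}y_{ij}(a,b)$ as $y_{ij}(a,b)$ times a single conjugate of an elementary transvection of level $AB$ or $BA$, which belongs to $E(n,R,A\circ B)$ --- the absorption relations $A\circ(A\circ B)\subseteq A\circ B$ and $ABA,BAB\subseteq A\circ B$ making the level assertion precise.

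The one genuinely delicate case is conjugation by the opposite transvection $t_{ji}(r)$ (equivalently $t_{ij}(r)$). Using $n\ge 3$ one rewrites $t_{ij}(a)=[t_{ih}(a),t_{hj}(1)]$ through an auxiliary index $h$ and expands ${}^{t_{ji}(r)}y_{ij}(a,b)=[{}^{t_{ji}(r)}t_{ij}(a),t_{ji}(b)]$ into a product of conjugates of commutators built from transvections and from a few elementary commutators $y_{ih}(a,-r)$, $y_{jh}(ra,1)$ with one argument now in $R$; carrying out the expansion and collecting terms, the transvection-type factors end up with level inside $A\circ B$, while the $y$-type factors, after being matched modulo $E(n,R,A\circ B)$ with the standard generators by means of Theorem~A and the elementary bi-additivity congruences $y_{i'j'}(a'+a'',b')\equiv y_{i'j'}(a',b')y_{i'j'}(a'',b')$ (and its mirror in the second slot), reassemble into $H$. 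This is where all the hypotheses are actually consumed --- rank at least $3$, the exact shape of the congruence module $A\circ B$, and, above all, Theorem~A (itself the Mennicke--Bass--Milnor--Serre type matrix computation) --- and it is the principal obstacle of the proof; everything else is routine commutator calculus.
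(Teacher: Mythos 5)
Your architecture is the right one, and it is essentially the strategy behind the cited proof (the paper itself does not reprove this lemma --- it quotes it from \cite{NZ2}, Theorem~1 --- but its \S\S5--9 build exactly the supporting machinery, Lemmas~9, 10, 13, 14): reduce via Lemma~\ref{l2} and commutator multiplicativity to the normal closure of commutators of bare transvections, sort those into $E(n,R,A\circ B)$ plus the $y_{ij}(a,b)$, then kill the outer normal closure by showing that $H=\langle E(n,R,A\circ B),\ y_{ij}(a,b)\rangle$ is already normal in $E(n,R)$, with Theorem~A at $c=1$ disposing of the ``moreover''. The gap sits exactly where you locate the difficulty, and your sketch does not close it. In the conjugation ${}^{t_{ji}(r)}y_{ij}(a,b)$ you expand through an auxiliary index and arrive at factors such as $y_{ih}(a,-r)$ and $y_{jh}(ra,1)$, whose second slot lies in $R$ rather than in $B$. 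These elements have level $A$, not $A\circ B$; they do not belong to $H$, and Theorem~A cannot ``match them with the standard generators'': applied with one ideal equal to $R$ it only yields congruences modulo $E(n,R,A\circ R)=E(n,R,A)$, which is much larger than $E(n,R,A\circ B)$ and is not contained in $H$. So either these factors cancel against one another --- which must be exhibited, and is precisely the Mennicke-type bookkeeping you have not carried out --- or the expansion must be organised so that they never appear.

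The clean way, and the one the paper's machinery is designed for, avoids the transvection-by-transvection case analysis altogether. Since $n\ge 3$, the matrix $y_{ij}(a,b)$ is concentrated in a $2\times 2$ block missing some index, hence lies in a copy of $\GL(n-1,R,A\circ B)$; Lemma~\ref{l9} writes any $x\in E(n,R)$ as a product of elements of the opposite unipotent radicals $U_{n-1}^{\pm}$, and Lemma~\ref{l10} gives $[g,u]\in U_{n-1}^{\pm}(A\circ B)\le E(n,A\circ B)$ for such $g$ and $u\in U_{n-1}^{\pm}$, whence by induction on the length of $x$ one gets ${}^{x}y_{ij}(a,b)\equiv y_{ij}(a,b)\pamod{E(n,R,A\circ B)}$ for \emph{all} $x\in E(n,R)$ (Lemmas~\ref{l13} and~\ref{l14}). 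That single congruence settles normality of $H$ uniformly, including your ``delicate'' opposite-position case, and with it your proof closes. A smaller slip: your parenthetical that ${}^{t_{ij}(-c)}t_{lk}(b)$ is a product of at most two transvections of level in $B$ fails when the index pairs are opposite (the commutator is then $y_{ij}(-c,b)$, not a transvection); the reduction to bare transvections should instead be routed through $E(n,R,B)=\langle z_{kl}(b,d)\rangle$ and multiplicativity, or deferred until the normality of $H$ is in hand.
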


Since all generators listed in Lemma 3 belong already to the
commutator subgroup of unrelative elementary subgroups, we
get the following corollary, \cite{NZ2}, Theorem~2.

\begin{lemma}\label{l4}
Let $R$ be any associative ring with $1$, let $n\ge 3$, and let $A,B$ 
be two-sided ideals of $R$.  Then one has
$$ \big[E(n,R,A),E(n,R,B)\big]=\big[E(n,R,A),E(n,B)\big]=\big[E(n,A),E(n,B)\big]. $$
\end{lemma}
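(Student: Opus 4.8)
The plan is to deduce the lemma directly from Lemma~\ref{l3}. First I would record the tautological chain of inclusions
$$ [E(n,A),E(n,B)]\ \subseteq\ [E(n,R,A),E(n,B)]\ \subseteq\ [E(n,R,A),E(n,R,B)], $$
which hold merely because $E(n,A)\subseteq E(n,R,A)$ and $E(n,B)\subseteq E(n,R,B)$. Everything then reduces to the opposite inclusion $[E(n,R,A),E(n,R,B)]\subseteq[E(n,A),E(n,B)]$, and since $[E(n,A),E(n,B)]$ is a subgroup it suffices, by the generation statement of Lemma~\ref{l3}, to place each of the three families of generators $z_{ij}(ab,c)$, $z_{ij}(ba,c)$ and $y_{ij}(a,b)$ inside $[E(n,A),E(n,B)]$.

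For the elementary commutators this is immediate from the definition: $y_{ij}(a,b)=[t_{ij}(a),t_{ji}(b)]$ with $t_{ij}(a)\in E(n,A)$ and $t_{ji}(b)\in E(n,B)$, so $y_{ij}(a,b)\in[E(n,A),E(n,B)]$. For the Stein---Tits---Vaserstein generators I would split the transvection by the Chevalley commutator formula: since $n\ge 3$ one can choose $h\notin\{i,j\}$ and write $t_{ji}(ab)=[t_{jh}(a),t_{hi}(b)]$, whence
$$ z_{ij}(ab,c)={}^{t_{ij}(c)}t_{ji}(ab)={}^{t_{ij}(c)}[t_{jh}(a),t_{hi}(b)]=\big[\,{}^{t_{ij}(c)}t_{jh}(a),\ {}^{t_{ij}(c)}t_{hi}(b)\,\big]. $$
A one-line matrix computation gives ${}^{t_{ij}(c)}t_{jh}(a)=t_{jh}(a)t_{ih}(ca)$ and ${}^{t_{ij}(c)}t_{hi}(b)=t_{hi}(b)t_{hj}(-bc)$; since $A$ and $B$ are two-sided ideals, $ca\in A$ and $bc\in B$, so the first conjugate lies in $E(n,A)$ and the second in $E(n,B)$, and therefore $z_{ij}(ab,c)\in[E(n,A),E(n,B)]$. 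The generator $z_{ij}(ba,c)$ is handled in exactly the same manner, now starting from $t_{ji}(ba)=[t_{jh}(b),t_{hi}(a)]$. Altogether this gives $[E(n,R,A),E(n,R,B)]\subseteq[E(n,A),E(n,B)]$, hence all three asserted equalities.

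There is no serious obstacle here once Lemma~\ref{l3} is in hand: Lemma~\ref{l4} is precisely the corollary announced just before its statement, and the whole weight of the assertion rests in the strong generation result Lemma~\ref{l3}. The only point needing a moment of care is that $[E(n,A),E(n,B)]$ need not be normalised by $E(n,R)$, so $z_{ij}(ab,c)$ cannot simply be written off as a ``conjugate of the element $t_{ji}(ab)\in[E(n,A),E(n,B)]$''; one must instead conjugate the commutator \emph{presentation} $[t_{jh}(a),t_{hi}(b)]$ and observe that each conjugated transvection stays within the unrelative elementary subgroup of the correct level, which is exactly what the coefficients $ca$ and $bc$ falling back into $A$ and $B$ guarantees.
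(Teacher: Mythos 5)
Your proof is correct and takes essentially the same route as the paper: there Lemma~\ref{l4} is derived as an immediate corollary of Lemma~\ref{l3}, by observing that all the listed generators already lie in $[E(n,A),E(n,B)]$. You merely make explicit the verification for the Stein---Tits---Vaserstein generators (which the paper could alternatively extract from Lemma~\ref{l5}, since $z_{ij}(ab,c)\in E(n,R,A\circ B)\le[E(n,A),E(n,B)]$), and your computation of the conjugated transvections and their levels is accurate.
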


Let us state also some subsidiary results we use in our proofs.
The following level computation is
standard, see, for instance, \cite{Vavilov_Stepanov_standard,
Vavilov_Stepanov_revisited, Hazrat_Vavilov_Zhang}, and references
there.

\begin{lemma}\label{l5}
$R$ be an associative ring with $1$, $n\ge 3$, and let $A$ and $B$
be two-sided ideals of $R$.  Then 
$$ E(n,R,A\circ B)\le\big[E(n,A),E(n,B)\big]\le
\big[E(n,R,A),E(n,R,B)\big] \le\GL(n,R,A\circ B). $$
\end{lemma}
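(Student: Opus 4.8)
The plan is to prove the three displayed inclusions one at a time, the two outer ones being the only ones requiring any work. The middle inclusion $[E(n,A),E(n,B)]\le[E(n,R,A),E(n,R,B)]$ is immediate from the definitions: $E(n,R,A)$ is by construction the normal closure of $E(n,A)$ in $E(n,R)$, hence contains $E(n,A)$, and likewise $E(n,B)\le E(n,R,B)$, so the containment passes to the mutual commutator subgroups. For the rightmost inclusion I would first record the routine fact that $E(n,R,A)\le\GL(n,R,A)$ and $E(n,R,B)\le\GL(n,R,B)$ (every elementary matrix of level $A$, and every $E(n,R)$-conjugate of such, reduces to the identity modulo $A$), which reduces matters to the level computation $[\GL(n,R,A),\GL(n,R,B)]\le\GL(n,R,A\circ B)$. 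This is a one-line matrix manipulation: writing $g=e+\alpha$ with $\alpha$ over $A$ and $h=e+\beta$ with $\beta$ over $B$, one has $gh-hg=\alpha\beta-\beta\alpha$, all of whose entries lie in $AB+BA=A\circ B$, and hence $[g,h]-e=(gh-hg)g^{-1}h^{-1}$ has all entries in the two-sided ideal $A\circ B$; since $\GL(n,R,A\circ B)$ is a subgroup containing all such commutators, it contains $[\GL(n,R,A),\GL(n,R,B)]$.

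The real content is the leftmost inclusion $E(n,R,A\circ B)\le[E(n,A),E(n,B)]$. By Lemma~\ref{l2} the group $E(n,R,A\circ B)$ is generated by the Stein--Tits--Vaserstein generators $z_{ij}(x,c)=t_{ij}(c)t_{ji}(x)t_{ij}(-c)$ with $x\in A\circ B$ and $c\in R$; since $z_{ij}(x+y,c)=z_{ij}(x,c)z_{ij}(y,c)$ and $A\circ B=AB+BA$ is additively spanned by the products $ab$ and $ba$ with $a\in A$, $b\in B$, it suffices to show $z_{ij}(ab,c)\in[E(n,A),E(n,B)]$ and $z_{ij}(ba,c)\in[E(n,A),E(n,B)]$; the symmetry $[E(n,A),E(n,B)]=[E(n,B),E(n,A)]$ then reduces this to the single case $x=ab$.

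To handle $z_{ij}(ab,c)$ I would use $n\ge3$ to pick a third index $h\notin\{i,j\}$ and start from the Chevalley commutator formula in the form $t_{ji}(ab)=[t_{jh}(a),t_{hi}(b)]$. Conjugating by $t_{ij}(c)$ and pushing the conjugation inside the commutator gives
$$ z_{ij}(ab,c)={}^{t_{ij}(c)}\bigl[t_{jh}(a),t_{hi}(b)\bigr]=\bigl[\,{}^{t_{ij}(c)}t_{jh}(a),\;{}^{t_{ij}(c)}t_{hi}(b)\,\bigr]. $$
A direct multiplication of elementary matrices then yields ${}^{t_{ij}(c)}t_{jh}(a)=t_{ih}(ca)t_{jh}(a)$ and ${}^{t_{ij}(c)}t_{hi}(b)=t_{hj}(-bc)t_{hi}(b)$; because $A$ and $B$ are two-sided ideals, $ca\in A$ and $bc\in B$, so the first factor lies in $E(n,A)$ and the second in $E(n,B)$. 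Hence $z_{ij}(ab,c)\in[E(n,A),E(n,B)]$, and symmetrically $z_{ij}(ba,c)\in[E(n,B),E(n,A)]=[E(n,A),E(n,B)]$, completing the argument.

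The computations involved are entirely routine, and the only place where genuine care is needed is this last step: one must check that conjugating the commutator decomposition of $t_{ji}(ab)$ by $t_{ij}(c)$ leaves the two resulting factors inside $E(n,A)$ and $E(n,B)$ respectively. This is exactly where the two-sidedness of $A$ and $B$ and the existence of the auxiliary index $h$ (that is, the hypothesis $n\ge3$) enter; bypassing it by a crude normal-closure argument would only deliver the weaker bound $E(n,R,A\circ B)\le[E(n,R,A),E(n,R,B)]$.
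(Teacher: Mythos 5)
Your argument is correct, and it is essentially the standard proof: the paper does not prove Lemma~\ref{l5} at all but cites it as a known level computation from \cite{Vavilov_Stepanov_standard, Vavilov_Stepanov_revisited, Hazrat_Vavilov_Zhang}, where the left inclusion is obtained exactly as you do --- reducing via Lemma~\ref{l2} and additivity to $z_{ij}(ab,c)$, then writing it as a commutator of ${}^{t_{ij}(c)}t_{jh}(a)\in E(n,A)$ and ${}^{t_{ij}(c)}t_{hi}(b)\in E(n,B)$ --- and the right inclusion by the matrix computation $gh-hg=\alpha\beta-\beta\alpha$. All your identities check out, so nothing further is needed.
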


However, when applying this lemma to multiple commutators, 
one should bear in mind that the symmetrised product is not 
associative. Thus, when writing something like $A\circ B\circ C$,
we have to specify the order in which products are formed.
Of course, for commutative rings this dependence on the
original bracketing disappears.

For quasi-finite rings the following result is \cite{Vavilov_Stepanov_revisited},
Theorem~5 and \cite{Hazrat_Vavilov_Zhang}, Theorem~2A, 
but for arbitrary associative rings it was only established in 
\cite{NZ3}, Theorem~2.

\begin{lemma}\label{l6}
Let $R$ be any associative ring with $1$, let $n\ge 3$, and let 
$A$ and $B$ be two-sided ideals of $R$. If $A$ and $B$ are
comaximal, $A+B=R$, then
$$ [E(n,A),E(n,B)]=E(n,R,A\circ B). $$
\end{lemma}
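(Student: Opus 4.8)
The plan is to sandwich $[E(n,A),E(n,B)]$ between $E(n,R,A\circ B)$ and itself. The inclusion $E(n,R,A\circ B)\le[E(n,A),E(n,B)]$ is supplied by Lemma~\ref{l5} and uses no hypothesis on the ideals, so everything is concentrated in the reverse inclusion $[E(n,A),E(n,B)]\le E(n,R,A\circ B)$. By Lemma~\ref{l4} I may replace $[E(n,A),E(n,B)]$ by $[E(n,R,A),E(n,R,B)]$, and Lemma~\ref{l3} then hands me a generating set: the Stein--Tits--Vaserstein generators $z_{ij}(ab,c),z_{ij}(ba,c)$ and the elementary commutators $y_{ij}(a,b)$, for $1\le i\neq j\le n$, $a\in A$, $b\in B$, $c\in R$. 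It therefore suffices to place each such generator in $E(n,R,A\circ B)$. The $z$-generators need no comaximality: $ab\in AB\subseteq A\circ B$ and $ba\in BA\subseteq A\circ B$, so $t_{ji}(ab),t_{ji}(ba)\in E(n,A\circ B)\le E(n,R,A\circ B)$, and since $E(n,R,A\circ B)$ is normal in $E(n,R)$ their conjugates $z_{ij}(ab,c)={}^{t_{ij}(c)}t_{ji}(ab)$ and $z_{ij}(ba,c)={}^{t_{ij}(c)}t_{ji}(ba)$ stay inside it.

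The comaximality is used only for the elementary commutators $y_{ij}(a,b)=[t_{ij}(a),t_{ji}(b)]$. I would fix $\alpha\in A$, $\beta\in B$ with $\alpha+\beta=1$ and first peel off a harmless factor: from $t_{ij}(a)=t_{ij}(a\alpha)t_{ij}(a\beta)$ and the identity $[xy,z]={}^x[y,z]\cdot[x,z]$ one gets
$$ y_{ij}(a,b)={}^{t_{ij}(a\alpha)}\bigl[t_{ij}(a\beta),t_{ji}(b)\bigr]\cdot\bigl[t_{ij}(a\alpha),t_{ji}(b)\bigr], $$
and because $a\beta\in AB\subseteq A\circ B$ the first factor already lies in $E(n,R,A\circ B)$ (a commutator of the normal subgroup $E(n,R,A\circ B)$ with $E(n,R)$, then conjugated inside $E(n,R)$); hence $y_{ij}(a,b)\equiv y_{ij}(a\alpha,b)\pamod{E(n,R,A\circ B)}$. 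Now I would invoke Theorem~A with inner element $c=\alpha$, which gives $y_{ij}(a\alpha,b)\equiv y_{kl}(a,\alpha b)\pamod{E(n,R,A\circ B)}$ for any $1\le k\neq l\le n$. Since $\alpha b\in AB\subseteq A\circ B$, the element $y_{kl}(a,\alpha b)=[t_{kl}(a),t_{lk}(\alpha b)]$ is again a commutator of $E(n,R,A\circ B)$ with $E(n,R)$ and so lies in $E(n,R,A\circ B)$. Chaining the two congruences puts $y_{ij}(a,b)$ in $E(n,R,A\circ B)$, which completes the reverse inclusion and hence the proof.

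I expect the only genuinely new point to be the appeal to Theorem~A in the previous paragraph. The splitting $1=\alpha+\beta$ manufactures a factor $\alpha\in A$, but in $y_{ij}(a\alpha,b)$ that $\alpha$ sits on the right of the first entry and cannot be merged with $b$ as it stands; Theorem~A is exactly what moves it across the commutator into the second entry, where $\alpha b\in AB$ collapses the level down to $A\circ B$. Everything else is routine commutator calculus together with the normality of $E(n,R,A\circ B)$ in $E(n,R)$. If one wanted to be self-contained, one could bypass Theorem~A and redo the underlying Mennicke-type computation with the relation $1=\alpha+\beta$ inserted by hand, at the cost of a visibly longer argument.
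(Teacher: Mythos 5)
Your argument is correct. Note, however, that the paper does not actually prove this lemma: it is imported verbatim from \cite{NZ3}, Theorem~2, so there is no in-paper proof to compare against. Your route is exactly the one the paper's toolkit is designed for: Lemma~\ref{l5} gives the inclusion $E(n,R,A\circ B)\le[E(n,A),E(n,B)]$; Lemmas~\ref{l4} and~\ref{l3} reduce the converse to the generators; the $z$-generators die for level reasons alone; and for the elementary commutators the splitting $1=\alpha+\beta$ together with Theorem~A moves the manufactured factor $\alpha$ across the commutator into the second slot, where $\alpha b\in AB$ collapses everything into $E(n,R,A\circ B)$. Every step checks out, including the two places where you silently use that a commutator of an element of the normal subgroup $E(n,R,A\circ B)$ with an element of $E(n,R)$ lands back in $E(n,R,A\circ B)$. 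It is worth observing that your computation is precisely the content of the last two congruences of Lemma~\ref{l15}, namely $y_{ij}(ab_1,b_2)\equiv e$ and $y_{ij}(a_1a_2,b)\equiv e \pamod{E(n,R,A\circ B)}$: comaximality gives $A=A(A+B)=A^2+AB$, so additivity of $y_{ij}$ in the first argument (also Lemma~\ref{l15}) plus those two relations finishes the $y$-generators in one line. Citing Lemma~\ref{l15} would shorten your second paragraph, but your self-contained derivation from Theorem~A is equivalent and equally valid.
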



\section{Partially relativised elementary subgroups}

Actually, the recent work by Alexei Stepanov \cite{Stepanov_calculus, Stepanov_nonabelian, Stepanov_universal, AS} makes apparent 
that in many contexts it is very useful to consider {\it partially\/}
relativised subgroups. One such context is relative localisation, as
introduced in the papers by Roozbeh Hazrat and the second author
\cite{Hazrat_Zhang, Hazrat_Zhang_multiple}, then expanded and
 developed in a series of our joint papers with Hazrat, and 
reconsidered by Stepanov, see \cite{RNZ1, RNZ2, RNZ5, 
yoga-1, Porto-Cesareo, yoga-2, Hazrat_Vavilov_Zhang, Stepanov_nonabelian, Stepanov_universal}.

Namely, for two ideals $A,B\unlhd R$
we denote by $E(n,B,A)$ the smallest subgroup containing
$E(n,A)$ and normalised by $E(n,B)$:
$$ E(n,B,A)=E(n,A)^{E(n,B)}. $$
\noindent
In particular, when $B=R$ we get the usual relative group
$E(n,R,A)$, as defined above.
Clearly, if $B\le C$, then $E(n,B,A)\le E(n,C,A)$. It follows that
$$ E(n,A)=E(n,0,A)\le E(n,B,A)\le E(n,R,A) $$
\par
On the other hand, 
$$ \big[E(n,A),E(n,B)\big]\le
E(n,B,A)\cap E(n,A,B). $$
\noindent
Thus, Lemma 5 implies the following inclusion, which is a
broad generalisation of \cite{AS}, Lemma 4.1, in the linear case.

\begin{proposition}\label{Apte-Stepa}
Let $R$ be any associative ring with $1$, let $n\ge 3$, 
and let $A,B$ be two-sided ideals of $R$.  Then one has
$$ E(n,R,A\circ B)\le E(n,B,A)\cap E(n,A,B). $$
\end{proposition}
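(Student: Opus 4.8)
The plan is to deduce this directly from Lemma~\ref{l5} together with the two elementary inclusions stated just before the proposition. First I would recall that, by definition, $E(n,B,A)=E(n,A)^{E(n,B)}$ is the smallest subgroup of $\GL(n,R)$ containing $E(n,A)$ and normalised by $E(n,B)$, and similarly with the roles of $A$ and $B$ exchanged. Hence it suffices to exhibit a single subgroup contained in $E(n,B,A)\cap E(n,A,B)$ that contains $E(n,A\circ B)$ and is normalised by $E(n,R)$, since $E(n,R,A\circ B)$ is by definition the $E(n,R)$-normal closure of $E(n,A\circ B)$ and is therefore the \emph{smallest} such subgroup; but in fact it is cleaner to argue that $E(n,R,A\circ B)$ itself lands inside the intersection, via the chain already displayed in the excerpt.

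Concretely, the key step is the observation $[E(n,A),E(n,B)]\le E(n,B,A)\cap E(n,A,B)$, which is stated in the text immediately preceding the proposition: indeed any commutator $[f,g]$ with $f\in E(n,A)$, $g\in E(n,B)$ equals $({}^{f}g)\,g^{-1}$, hence lies in $E(n,B)^{E(n,A)}\cdot E(n,B)=E(n,A,B)$ (using $E(n,B)\le E(n,A,B)$ and normality of $E(n,A,B)$ under $E(n,A)$), and symmetrically $[f,g]=f\cdot{}^{g}f^{-1}\in E(n,B,A)$; since these commutators generate $[E(n,A),E(n,B)]$, the inclusion follows. Combining this with the left-hand inclusion of Lemma~\ref{l5}, namely $E(n,R,A\circ B)\le[E(n,A),E(n,B)]$, immediately yields $E(n,R,A\circ B)\le E(n,B,A)\cap E(n,A,B)$, which is exactly the assertion.

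There is essentially no obstacle here: the statement is a formal repackaging of Lemma~\ref{l5} once one has the generator-level remark $[f,g]\in E(n,B,A)\cap E(n,A,B)$, and the only point requiring a moment's care is checking that $E(n,B,A)$ really is closed under the conjugations needed to absorb ${}^{f}g^{-1}$ for $f\in E(n,A)\le E(n,B,A)$ and $g\in E(n,B)$ — but this is automatic, since $E(n,B,A)$ is by construction normalised by $E(n,B)$ and contains $E(n,A)$, hence is normalised by $E(n,A)$ as well (any subgroup contains its own elements as normalising elements only in the sense of conjugation within itself, so more precisely: $E(n,A)\le E(n,B,A)$ gives ${}^{f}h\in E(n,B,A)$ for $f,h\in E(n,A)$ trivially, and for $g\in E(n,B)$ we use that $E(n,B,A)$ is $E(n,B)$-invariant). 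Thus the proof reduces to two lines citing the preceding discussion and Lemma~\ref{l5}, with the symmetry in $A$ and $B$ handled by interchanging their roles. I expect the author's proof to be of exactly this length.
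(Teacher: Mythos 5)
Your argument is correct and is exactly the paper's: the inclusion $[E(n,A),E(n,B)]\le E(n,B,A)\cap E(n,A,B)$ (which the paper states just before the proposition, and which you rightly verify on generators via $[f,g]=({}^{f}g)g^{-1}=f\cdot{}^{g}f^{-1}$) combined with the left-hand inclusion $E(n,R,A\circ B)\le[E(n,A),E(n,B)]$ of Lemma~\ref{l5}. The closing worries about extra normalisation are unnecessary --- both factors in each decomposition already lie in the relevant partially relativised subgroup --- but they do no harm.
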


Now, we start working towards a partially relativised 
generalisation of Lemma 2.

\begin{lemma}\label{l7}
Let $R$ be any associative ring with $1$, let $n\ge 2$, and let $A,B$ 
be two-sided ideals of $R$.  Then one has
$$ \big\langle E(n,A),E(n,B)\big\rangle=E(n,A+B). $$
\end{lemma}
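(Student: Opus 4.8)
The plan is to prove the two inclusions separately. The inclusion $E(n,A+B)\supseteq\langle E(n,A),E(n,B)\rangle$ is immediate: since $A\subseteq A+B$ and $B\subseteq A+B$, every generator $t_{ij}(a)$ with $a\in A$ and every $t_{ij}(b)$ with $b\in B$ lies in $E(n,A+B)$, hence so does the subgroup they generate. The content is therefore the opposite inclusion $E(n,A+B)\subseteq\langle E(n,A),E(n,B)\rangle$, for which it suffices to check that each standard generator $t_{ij}(c)$ with $c\in A+B$ belongs to the right-hand side. Write $c=a+b$ with $a\in A$, $b\in B$; then $t_{ij}(c)=t_{ij}(a)t_{ij}(b)$, since matrix units multiply as $e_{ij}^2=0$ for $i\neq j$ and so $t_{ij}(a)t_{ij}(b)=(e+ae_{ij})(e+be_{ij})=e+(a+b)e_{ij}$. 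The first factor lies in $E(n,A)$, the second in $E(n,B)$, so $t_{ij}(c)\in\langle E(n,A),E(n,B)\rangle$, and we are done.

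I expect no real obstacle here: the only subtlety is the hypothesis $n\ge 2$ rather than $n\ge 3$, which is harmless because the argument never uses a third index — the decomposition $t_{ij}(a+b)=t_{ij}(a)t_{ij}(b)$ is available for any fixed pair $i\neq j$ and both $E(n,A)$ and $E(n,B)$ are by definition generated by such transvections. One should perhaps remark that this lemma does \emph{not} assert anything about relative elementary subgroups: the analogous statement $\langle E(n,R,A),E(n,R,B)\rangle=E(n,R,A+B)$ also holds and follows formally, since each side is normal in $E(n,R)$ and contains the $E(n,\cdot)$-generators, but that is not what is claimed. The point of isolating the unrelative version is that it will be combined later with the normalisation calculus for $E(n,B,A)$ in the proof of Theorem~2.

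Thus the whole proof reduces to the single identity $t_{ij}(a)t_{ij}(b)=t_{ij}(a+b)$, which is the additivity of transvections in a fixed position, together with the trivial monotonicity $A,B\subseteq A+B$. No appeal to the earlier lemmas (generation of $E(n,R,A)$, commutator formulae, etc.) is needed, and the statement holds over an arbitrary associative ring with no commutativity or finiteness hypotheses.
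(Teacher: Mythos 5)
Your proof is correct and coincides with the paper's own argument: both reduce the nontrivial inclusion to the additivity identity $t_{ij}(a+b)=t_{ij}(a)t_{ij}(b)$ applied to each generator of $E(n,A+B)$. Nothing further is needed.
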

\begin{proof}
Clearly, the left hand side is contained in the right hand side. 
On the other hand $E(n,A+B)$ is generated by the elementary
transvections $t_{ij}(a+b)$, where $1\le i\neq j\le n$, $a\in A$,
$b\in B$. But every $t_{ij}(a+b)=t_{ij}(a)t_{ij}(b)\in E(n,A)E(n,B)$.
\end{proof}

In particular, even $E(n,A)E(n,B)=E(n,A+B)$, if the left hand side 
is a subgroup. But this is easy to remedy. Indeed, the above lemma 
implies that in the definition of partially relativised subgroups one can assume that $A\le B$.

\begin{Corollary}
Let $R$ be any associative ring with $1$, let $n\ge 2$, and let $A,B$ 
be two-sided ideals of $R$.  Then $E(n,B,A)=E(n,A+B,A)$.
\end{Corollary}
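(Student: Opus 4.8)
The plan is to prove the two inclusions separately; the first is immediate from monotonicity, while the second exploits the fact that $E(n,A)$ normalises any subgroup that contains it. For the easy direction, since $B\le A+B$, the monotonicity property $B\le C\Rightarrow E(n,B,A)\le E(n,C,A)$ recorded at the start of this section gives at once $E(n,B,A)\le E(n,A+B,A)$.

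For the reverse inclusion, recall that by definition $E(n,A+B,A)$ is the smallest subgroup containing $E(n,A)$ and normalised by $E(n,A+B)$, so it suffices to check that $E(n,B,A)$ enjoys both of these properties. It contains $E(n,A)$ by construction, so it remains to see that it is normalised by $E(n,A+B)$. By Lemma~\ref{l7} one has $E(n,A+B)=\langle E(n,A),E(n,B)\rangle$, so it is enough to verify that $E(n,B,A)$ is normalised by each of $E(n,A)$ and $E(n,B)$. It is normalised by $E(n,B)$ by its very definition; and it is normalised by $E(n,A)$ for the trivial reason that $E(n,A)$ is a subgroup of the \emph{group} $E(n,B,A)$, so that for $h\in E(n,A)$ and $g\in E(n,B,A)$ both $h$ and $g$ lie in $E(n,B,A)$, whence ${}^hg\in E(n,B,A)$. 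Hence $E(n,B,A)$ is normalised by $\langle E(n,A),E(n,B)\rangle=E(n,A+B)$, and the desired inclusion $E(n,A+B,A)\le E(n,B,A)$ follows.

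Combining the two inclusions yields $E(n,B,A)=E(n,A+B,A)$. There is essentially no obstacle here; the only step worth isolating is the reduction of the normalising group $E(n,A+B)$ to the pair $E(n,A)$, $E(n,B)$ via Lemma~\ref{l7}, after which the self-normalisation of $E(n,A)$ inside $E(n,B,A)$ finishes the argument.
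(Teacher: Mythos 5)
Your proof is correct and follows essentially the route the paper intends: the corollary is stated there as an immediate consequence of Lemma~\ref{l7}, $\langle E(n,A),E(n,B)\rangle=E(n,A+B)$, which is exactly the reduction you isolate, combined with the standard facts that $E(n,B,A)$ is normalised by $E(n,B)$ by definition and by $E(n,A)$ because it contains it. Nothing further is needed.
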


But since $E(n,B,A)$ is normalised by both $E(n,A)$ and $E(n,B)$,
it is normal in $E(n,A+B)$.

\begin{Corollary}
Let $R$ be any associative ring with $1$, let $n\ge 2$, and let $A,B$ 
be two-sided ideals of $R$.  Then 
$$ E(n,B,A)E(n,B)=E(n,A+B). $$
\end{Corollary}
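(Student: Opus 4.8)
The plan is to deduce the final corollary, $E(n,B,A)E(n,B)=E(n,A+B)$, directly from the two immediately preceding results, namely the second corollary to Lemma~\ref{l7} (which says $E(n,B,A)=E(n,A+B,A)$) and the remark that $E(n,B,A)$, being normalised by both $E(n,A)$ and $E(n,B)$, is normal in $E(n,A+B)$. First I would note that since $E(n,B,A)$ is normal in $E(n,A+B)=\langle E(n,A),E(n,B)\rangle$ (the latter equality being Lemma~\ref{l7}), the product $E(n,B,A)E(n,B)$ is automatically a subgroup of $E(n,A+B)$: it is the preimage under $E(n,A+B)\to E(n,A+B)/E(n,B,A)$ of the subgroup generated by the image of $E(n,B)$. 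So the containment $\subseteq$ is immediate.

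For the reverse containment, the key point is that the quotient group $E(n,A+B)/E(n,B,A)$ is generated by the images of $E(n,A)$ and $E(n,B)$, by Lemma~\ref{l7}. But $E(n,A)\le E(n,B,A)$ by the very definition of the partially relativised subgroup (indeed $E(n,A)=E(n,0,A)\le E(n,B,A)$), so the image of $E(n,A)$ in this quotient is trivial. Hence the quotient is generated by the image of $E(n,B)$ alone, which means $E(n,A+B)=E(n,B,A)E(n,B)$, i.e.\ every element of $E(n,A+B)$ can be written as a product of an element of $E(n,B,A)$ and an element of $E(n,B)$. This gives the containment $\supseteq$ and completes the proof.

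I do not expect any serious obstacle here: the statement is essentially a formal consequence of two facts already in hand — that $E(n,B,A)$ is normal in $E(n,A+B)$, and that modulo $E(n,B,A)$ the generators coming from $E(n,A)$ collapse. The only thing one must be slightly careful about is ensuring $E(n,B,A)$ really is normal in all of $E(n,A+B)$ and not just normalised by $E(n,A)E(n,B)$ as a set; but this is exactly the content of the sentence preceding the corollary, and it follows because $E(n,A+B)$ is generated by $E(n,A)\cup E(n,B)$, each of which normalises $E(n,B,A)$. One could alternatively phrase the argument without invoking normality explicitly, by a direct induction on word length: writing an element of $E(n,A+B)$ as a word in transvections $t_{ij}(a)$ and $t_{ij}(b)$ and sweeping all the $B$-transvections to the right one at a time, absorbing the resulting conjugates of $A$-transvections into $E(n,B,A)$; but the quotient-group formulation is cleaner and is the one I would write up.
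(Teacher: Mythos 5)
Your argument is correct and is essentially the paper's own: the corollary is deduced from Lemma~\ref{l7} together with the observation that $E(n,B,A)$, being normalised by $E(n,A)$ and $E(n,B)$, is normal in $E(n,A+B)$, so that $E(n,B,A)E(n,B)$ is a subgroup containing both $E(n,A)$ and $E(n,B)$ and hence all of $E(n,A+B)$. No gaps.
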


Passing to the normal closures in $E(n,R)$ we get the familiar
equality, see, in particular, \cite{Vavilov_Stepanov_standard},
Lemma 1.

\begin{Corollary}
Let $R$ be any associative ring with $1$, let $n\ge 2$, and let $A,B$ 
be two-sided ideals of $R$.  Then 
$$ E(n,R,A)E(n,R,B)=E(n,R,A+B). $$
\end{Corollary}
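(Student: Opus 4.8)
\emph{Proof proposal.} The plan is to pass to normal closures in $E(n,R)$, the one structural fact doing all the work being that the product of two subgroups, each normal in $E(n,R)$, is again a subgroup of $E(n,R)$, and is itself normal in $E(n,R)$. First I would dispose of the inclusion ``$\subseteq$''. Since $A\le A+B$ and $B\le A+B$, monotonicity of relative elementary subgroups gives $E(n,R,A)\le E(n,R,A+B)$ and $E(n,R,B)\le E(n,R,A+B)$; as the right-hand side is a group, it contains the product $E(n,R,A)E(n,R,B)$.

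For the reverse inclusion, recall that $E(n,R,A)$ and $E(n,R,B)$ are by their very definition normal subgroups of $E(n,R)$. Hence $H:=E(n,R,A)E(n,R,B)$ is a subgroup of $E(n,R)$, and in fact $H\unlhd E(n,R)$. Now $H$ contains $E(n,A)$ and $E(n,B)$, so by Lemma~\ref{l7} it contains $\langle E(n,A),E(n,B)\rangle=E(n,A+B)$. Being normal in $E(n,R)$, the group $H$ must then contain the normal closure of $E(n,A+B)$ in $E(n,R)$, which is precisely $E(n,R,A+B)$. Combining the two inclusions yields the asserted equality.

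The \emph{main obstacle} here is essentially nil: the only point requiring a moment's care is the invocation of the elementary group-theoretic lemma that a product of two normal subgroups is a subgroup (so that $H$ is a group at all, and the argument does not collapse), together with the observation that Lemma~\ref{l7} is valid already for $n\ge 2$, matching the hypothesis of the present statement. Alternatively, one could derive the claim directly from the preceding corollary $E(n,B,A)E(n,B)=E(n,A+B)$ by taking normal closures in $E(n,R)$ of both sides: the normal closure of $E(n,B,A)$ in $E(n,R)$ equals $E(n,R,A)$ (it contains $E(n,A)$, hence $E(n,R,A)$, while $E(n,R,A)$ is normal in $E(n,R)$ and contains $E(n,A)^{E(n,B)}=E(n,B,A)$), and that of $E(n,B)$ is $E(n,R,B)$, so the normal closure of the product is $\langle E(n,R,A),E(n,R,B)\rangle=E(n,R,A)E(n,R,B)$, whereas that of $E(n,A+B)$ is $E(n,R,A+B)$.
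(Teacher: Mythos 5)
Your argument is correct and is exactly what the paper intends: it derives the identity from Lemma~\ref{l7} (equivalently, from the preceding corollary) by passing to normal closures in $E(n,R)$, using that $E(n,R,A)$ and $E(n,R,B)$ are by definition normal in $E(n,R)$ so their product is a normal subgroup containing $E(n,A+B)$. No gaps; this matches the paper's one-line proof.
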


The following result is a generalisation of a classical result on
generation of relative elementary subgroups $E(n,R,A)$, 
discovered in various contexts by Stein, Tits and Vaserstein, 
see, for instance, \cite{Vaserstein_normal}. It is stated in 
terms of the {\it Stein---Tits---Vaserstein generators\/}):
$$ z_{ij}(a,c)=t_{ij}(c)t_{ji}(a)t_{ij}(-c),\qquad
1\le i\neq j\le n,\quad a\in A,\quad c\in R. $$
\par
Essentially, its proof follows the proofs of Lemma 2 (as 
reproduced in \cite{Vavilov_Stepanov_1}, Theorem~1 
or \cite{Hazrat_Vavilov_Zhang}, Lemma 3, for instance). 
But of course a posteriori we can take advantage of the 
simplifications that result from Lemma 3.

\begin{theorem}\label{t2}
Let $R$ be an associative ring with identity $1$, $n\ge 3$,
and let $A$ and $B$ be two-sided ideals of $R$. Then the 
partially relativised elementary subgroup $E(n,B,A)$ is 
generated the elements $z_{ij}(a,b)$, for all $1\le i\neq j\le n$, 
$a\in A$, $b\in B$.
\end{theorem}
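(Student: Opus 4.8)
The plan is to prove the two inclusions between $E(n,B,A)$ and the subgroup $H:=\langle z_{ij}(a,b): 1\le i\neq j\le n,\ a\in A,\ b\in B\rangle$ separately. For the inclusion $H\le E(n,B,A)$, it suffices to observe that each generator $z_{ij}(a,b)=t_{ij}(b)t_{ji}(a)t_{ij}(-b)$ is a conjugate of the elementary matrix $t_{ji}(a)\in E(n,A)$ by the elementary matrix $t_{ij}(b)\in E(n,B)$, hence lies in $E(n,A)^{E(n,B)}=E(n,B,A)$. This direction is immediate and needs no ring-theoretic input.

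The substantial direction is $E(n,B,A)\le H$. First I would check that $E(n,A)\le H$: since $t_{ij}(a)=z_{ij}(a,0)$ (taking $c=0$), all elementary generators of $E(n,A)$ already appear among the $z_{ij}(a,b)$. So it remains to show that $H$ is normalised by $E(n,B)$, i.e.\ that for every elementary transvection $t_{kl}(b)$ with $b\in B$ and every generator $z_{ij}(a,b')$ one has ${}^{t_{kl}(b)}z_{ij}(a,b')\in H$. The strategy, following the classical Stein--Tits--Vaserstein argument (as in \cite{Hazrat_Vavilov_Zhang}, Lemma 3), is a case analysis on the overlap of the index sets $\{i,j\}$ and $\{k,l\}$, using $n\ge 3$ to bring in an auxiliary third index $h$ whenever needed. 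One first reduces to conjugating the basic generators $t_{ji}(a)$ themselves: writing ${}^{t_{kl}(b)}z_{ij}(a,b')={}^{t_{kl}(b)t_{ij}(b')}t_{ji}(a)\cdot{}^{t_{kl}(b)}t_{ij}(-b')$, and noting ${}^{t_{kl}(b)}t_{ij}(-b')$ is itself (a product of elements of) $H$ since it is a conjugate of an element of $E(n,B)\le\langle z\rangle$'s generating set when $\{k,l\}\cap\{i,j\}$ is controlled — actually the cleaner route is to establish directly that the set of elements $g\in E(n,R)$ with ${}^g t_{ji}(a)\in H$ for all $i\neq j$, $a\in A$, forms a subgroup containing all $t_{kl}(b)$, $b\in B$, hence contains $E(n,B)$. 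To see $t_{kl}(b)$ belongs to this set one uses the commutator identities: for instance when $\{k,l\}\cap\{i,j\}=\emptyset$, $t_{kl}(b)$ and $t_{ji}(a)$ commute; when exactly one index coincides, say $l=j$, $k\notin\{i,j\}$, one has ${}^{t_{kj}(b)}t_{ji}(a)=t_{ji}(a)\cdot{}^{?}t_{ki}(ba)$-type relations that rewrite as products of $z$'s and shorter commutators, and the genuinely delicate overlap $\{k,l\}=\{j,i\}$ is handled by inserting a third index $h$ and using $t_{ji}(a)=[t_{jh}(a),t_{hi}(1)]$ or a similar split, reducing to the previously treated cases.

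The main obstacle, as in all proofs of this type, is precisely the case where $\{k,l\}=\{i,j\}$ — either $t_{ji}(b)$ or $t_{ij}(b)$ conjugating $z_{ij}(a,c)$ — since then the transvections genuinely interact inside the same $\SL_2$-block and one cannot separate the root subgroups. The resolution uses $n\ge 3$: pick an index $h\notin\{i,j\}$, decompose the relevant long-range transvection via a Chevalley-type commutator relation into a product of transvections involving the new index $h$ (where one of the earlier cases applies), and then reassemble, carefully tracking conjugating factors so that every intermediate term is visibly in $H$. One should also record at the outset the two useful identities $z_{ij}(a,c)z_{ij}(a',c)=z_{ij}(a+a',c)$ (additivity in the first slot, valid since $t_{ij}(c)$ conjugation is a homomorphism on the root subgroup) and the conjugation formula ${}^{t_{ij}(d)}z_{ij}(a,c)=z_{ij}(a,c+d)$, which together already handle all overlaps of the form $k=i,l=j$ trivially, leaving only finitely many genuinely new configurations. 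Throughout, one may invoke Lemma~\ref{l3} a posteriori to shorten the bookkeeping, as the excerpt suggests, since it lets us replace certain elementary commutators that arise by their known generators.
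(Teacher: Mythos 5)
Your overall architecture is sound, and both easy steps are correct: $H:=\langle z_{ij}(a,b)\rangle\le E(n,B,A)$ because $z_{ij}(a,b)={}^{t_{ij}(b)}t_{ji}(a)$ is an $E(n,B)$-conjugate of an element of $E(n,A)$, and $E(n,A)\le H$ because $t_{ji}(a)=z_{ij}(a,0)$. Showing that $H$ is normalised by $E(n,B)$ would indeed finish the proof, and this is the route the paper itself alludes to (``follows the proofs of Lemma 2''). But two technical slips and one genuine gap remain. The identity ${}^{t_{kl}(b)}z_{ij}(a,b')={}^{t_{kl}(b)t_{ij}(b')}t_{ji}(a)\cdot{}^{t_{kl}(b)}t_{ij}(-b')$ is false: since $z_{ij}(a,b')={}^{t_{ij}(b')}t_{ji}(a)$, one has ${}^{t_{kl}(b)}z_{ij}(a,b')={}^{t_{kl}(b)t_{ij}(b')}t_{ji}(a)$ with no second factor. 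More importantly, your ``cleaner route'' --- the set of $g\in E(n,R)$ with ${}^{g}t_{ji}(a)\in H$ for all $i\ne j$, $a\in A$ --- is not obviously a subgroup: ${}^{g_2}t_{ji}(a)$ is a product of $z$'s, not a single transvection, so the defining condition says nothing about applying ${}^{g_1}$ to it. What is a subgroup is the normaliser of $H$, so you must conjugate every generator $z_{ij}(a,b')$, not merely the $t_{ji}(a)$.

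The genuine gap is exactly the case you defer, $(k,l)=(j,i)$, i.e.\ ${}^{t_{ji}(b)t_{ij}(b')}t_{ji}(a)$. Your proposed fix --- insert a third index $h$ and write $t_{ji}(a)=[t_{jh}(a),t_{hi}(1)]$ --- imports the absolute parameter $1$, and when you expand the resulting commutator you inevitably produce factors of the shape $[t_{ih}(\alpha),t_{hi}(1)]=t_{ih}(\alpha)\,z_{hi}(-\alpha,1)$ with $\alpha\in BA$, whose second slot is $1\notin B$. These are \emph{not} among your generators, so the intermediate terms are not ``visibly in $H$''. They do lie in $H$, but only because $\alpha$ is a genuine product from $AB+BA$, which permits a further Chevalley splitting $t_{ih}(\alpha)=[t_{ig}(b'),t_{gh}(a)]$ and a second round of the same computation; equivalently, one must prove separately that $z_{ij}(ab,c)$ and $z_{ij}(ba,c)$ lie in $H$ for \emph{arbitrary} $c\in R$, i.e.\ that $E(n,R,A\circ B)\le H$. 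That inclusion is forced by the theorem itself (Proposition~\ref{Apte-Stepa} gives $E(n,R,A\circ B)\le E(n,B,A)$), so any correct proof must at some point manufacture $z_{ij}(ab,c)$, $c\in R$, from the stated generators --- and nothing in your outline does so. This is precisely the content of the paper's proof: it first invokes Lemma~\ref{l3} to reduce the whole theorem to that single claim, and then carries out the explicit commutator calculation starting from ${}^{t_{ji}(c)}t_{ij}(ab)={}^{t_{ji}(c)}[t_{ih}(a),t_{hj}(b)]$. You need to either perform that calculation or find a substitute for it; the case analysis as described does not close.
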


\begin{proof}
Clearly, $E(n,B,A)$ is generated by ${}^{y}x$ with $x\in E(n,A)$ 
and $y\in E(n,B)$. Since ${}^{y}x=[y,x]\cdot x$ we get 
$[y,x]\in [E(n,A), E(n,B)]$. By definition, the factor $x$  is 
a product of elementary matrices $t_{ij}(a)$ with $i\ne j$ 
and $a\in A$. By Lemma 3 the commutator subgroup 
$[E(n,A), E(n,B)]$ is generated by $E(n,R,A\circ B)$ together 
with the elementary commutators 
$$ y_{ij}(a,b)=[t_{ij}(a),t_{ji}(b)]=t_{ij}(a)\cdot 
{}^{t_{ji}(b)}t_{ij}(-a)=z_{ij}(a,0)z_{ij}(a,b), $$ 
\noindent
where $1\le i\ne j\le n$, $a\in A$ and $b\in B$.
\par
Thus, it only remains to show that the generators
$z_{ij}(ab,c)={}^{t_{ji}(c)}t_{ij}(ab)$ and 
$z_{ij}(ba,c)={}^{t_{ji}(c)}t_{ij}(ba)$ of the relative
elementary group $E(n,R,A\circ B)$ are products of 
generators listed in the statement of the lemma.
Here, as above, $1\le i\ne j\le n$, $a\in A$, $b\in B$ and
$c\in C$.
\par
We choose a $h\ne i,j$, then
\begin{multline*}
{}^{t_{ji}(c)}t_{ij}(ab)={}^{t_{ji}(c)}[t_{ih}(a), t_{hj}(b)]
=[{}^{t_{ji}(c)}t_{ih}(a), {}^{t_{ji}(c)}t_{hj}(b)]=\\
\big[[t_{ji}(c),t_{ih}(a)]t_{ih}(a), [t_{ji}(c),t_{hj}(b)]t_{hj}(b)\big]
=\big[t_{jh}(ca)t_{ih}(a), t_{hi}(-bc)t_{hj}(b)\big]=\\
{}^{t_{jh}(ca)}[t_{ih}(a), t_{hi}(-bc)t_{hj}(b)]\cdot 
[t_{jh}(ca), t_{hi}(-bc)t_{hj}(b)].
\end{multline*} 
To finish the proof, we consider the two above commutators separately 
$$ u=[t_{ih}(a), t_{hi}(-bc)t_{hj}(b)],\qquad 
v=[t_{jh}(ca), t_{hi}(-bc)t_{hj}(b)] $$ 
\noindent
The following computation shows that $u$ is a products of 
generators listed in the statement of the lemma.
\begin{multline*}
u=[t_{ih}(a), t_{hi}(-bc)t_{kj}(b)]
=[t_{ih}(a), t_{hi}(-bc)]\cdot {}^{t_{hi}(-bc)}[t_{ih}(a), t_{hj}(b)]=\\
[t_{ih}(a), t_{hi}(-bc)]\cdot {}^{t_{hi}(-bc)}t_{ij}(ab)
=[t_{ih}(a), t_{hi}(-bc)]\cdot [t_{hi}(-bc),t_{ij}(ab)]t_{ij}(ab)=\\
[t_{ih}(a), t_{hi}(-bc)]\cdot t_{hj}(-bcab)t_{ij}(ab)
=t_{ih}(a)\cdot {}^{t_{hi}(-bc)}t_{ih}(-a) t_{hj}(-bcab)t_{ij}(ab),
\end{multline*}
A similar computation shows that the same holds also for $v$,
which finishes the proof.
\end{proof}

\begin{lemma}\label{l8}
Let $R$ be an associative ring with identity $1$, and let $A$,  $B$,  $C$  and $D$ be its two-sided ideals.
Then we have the following commutator formula for partially 
relativised elementary subgroups
$$ [E(n,B,A), E(n,D,C)]=[E(n,A),E(n,C)]. $$
\end{lemma}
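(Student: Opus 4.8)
The plan is to prove the two inclusions separately. The inclusion $[E(n,A),E(n,C)]\le[E(n,B,A),E(n,D,C)]$ is immediate: since $E(n,A)\le E(n,B,A)$ and $E(n,C)\le E(n,D,C)$ by the chain $E(n,A)=E(n,0,A)\le E(n,B,A)$ noted just before the Corollary to Lemma~\ref{l7}, the commutator of the smaller subgroups is contained in the commutator of the larger ones. So the whole content is the reverse inclusion $[E(n,B,A),E(n,D,C)]\le[E(n,A),E(n,C)]$.

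For the reverse inclusion, first I would observe that it suffices to treat the case $A\le B$ and $C\le D$. Indeed, by Theorem~\ref{t2} the subgroup $E(n,B,A)$ is generated by the $z_{ij}(a,b)$ with $a\in A$, $b\in B$, and the same generating set is obtained if $B$ is replaced by $A+B$; more conceptually this is the first Corollary to Lemma~\ref{l7}, $E(n,B,A)=E(n,A+B,A)$. Replacing $B$ by $A+B$ and $D$ by $C+D$ changes neither side of the claimed identity (the right-hand side not at all, the left-hand side because the groups $E(n,B,A)$, $E(n,D,C)$ are unchanged), so we may indeed assume $A\le B$ and $C\le D$; I will keep the names $B$, $D$.

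The heart of the argument is then a generator-by-generator computation. By Theorem~\ref{t2}, $[E(n,B,A),E(n,D,C)]$ is generated by conjugates, under $\langle E(n,B,A),E(n,D,C)\rangle\le E(n,B+D)$, of basic commutators $[z_{ij}(a,b),z_{kl}(c,d)]$ with $a\in A$, $b\in B$, $c\in C$, $d\in D$. Since $[E(n,A),E(n,C)]$ is normal in $E(n,R)$ — it equals $[E(n,R,A),E(n,R,C)]$ by Lemma~\ref{l4}, which is normal in $E(n,R)$ as a commutator of normal subgroups — it is in particular normalised by $E(n,B+D)$, so it is enough to show each $[z_{ij}(a,b),z_{kl}(c,d)]$ lies in $[E(n,A),E(n,C)]$. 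Now $z_{ij}(a,b)={}^{t_{ij}(b)}t_{ji}(a)$ and $z_{kl}(c,d)={}^{t_{kl}(d)}t_{lk}(c)$; writing ${}^{t_{ij}(b)}t_{ji}(a)=[t_{ij}(b),t_{ji}(a)]\cdot t_{ji}(a)=y_{ij}(b,a)\,t_{ji}(a)$ and expanding the outer commutator by bimultiplicativity, every factor becomes a conjugate of a commutator of the shape $[y\text{-type or }t_{ji}(a),\ y\text{-type or }t_{lk}(c)]$ where the first slot carries an element of $E(n,A)$ and the second of $E(n,C)$ — after pushing all the $t_{ij}(b)$, $t_{kl}(d)$ outside via the commutator identities $[{}^{x}u,{}^{y}v]={}^{x}[u,{}^{x^{-1}y}v]$ — and such conjugates lie in $[E(n,R,A),E(n,R,C)]=[E(n,A),E(n,C)]$, again by Lemma~\ref{l4} together with its normality. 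I would carry this out cleanly by first noting that any conjugate ${}^{w}t_{ji}(a)$ with $w\in E(n,B+D)$ lies in $E(n,R,A)$, hence $E(n,B,A)=\langle{}^{w}t_{ij}(a)\rangle\le E(n,R,A)$ and symmetrically $E(n,D,C)\le E(n,R,C)$, whence $[E(n,B,A),E(n,D,C)]\le[E(n,R,A),E(n,R,C)]=[E(n,A),E(n,C)]$ directly by Lemma~\ref{l4} — this collapses the whole computation.

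The main obstacle, then, is essentially bookkeeping rather than depth: one must be careful that the passage $E(n,B,A)\le E(n,R,A)$ is valid (it is, since $E(n,B)\le E(n,R)$ so the $E(n,B)$-normal closure of $E(n,A)$ sits inside its $E(n,R)$-normal closure), and that Lemma~\ref{l4}'s identity $[E(n,R,A),E(n,R,C)]=[E(n,A),E(n,C)]$ is available for the \emph{unrelative} groups on the right, which it is. Once this is in place the lemma follows in two lines; the only real input is Lemma~\ref{l4}, and the role of Theorem~\ref{t2} in this particular proof is merely psychological — it explains why the statement is natural — so I would present the short argument via normal closures and relegate the generator computation to a remark.
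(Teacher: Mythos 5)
Your proposal is correct and, once you discard the generator-by-generator detour as you yourself suggest, it is exactly the paper's proof: sandwich $[E(n,B,A),E(n,D,C)]$ between $[E(n,0,A),E(n,0,C)]$ and $[E(n,R,A),E(n,R,C)]$ via the monotonicity $E(n,A)=E(n,0,A)\le E(n,B,A)\le E(n,R,A)$, and then invoke Lemma~\ref{l4} to identify the two ends with $[E(n,A),E(n,C)]$. The reduction to $A\le B$, $C\le D$ and the computation with the $z_{ij}$ generators are, as you note, unnecessary.
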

\begin{proof}
Combining the inclusions among partially relativised
subgroups with Lemma 4, we get
\begin{multline*}
[E(n,A),E(n,C)]=[E(n,0,A), E(n,0,C]]\le [E(n,B,A), E(n,D,C)]\le \\
[E(n,R,A), E(n,R,C)]
=[E(n,A),E(n,C)].
\end{multline*}
\end{proof}


\section{Parabolic subgroups}

In this section and the next one we collect some results 
on generation of $E(n,R,A)$ by the elements in unipotent 
radicals, or their conjugates. We start with the absolute case.

\subsection{Standard parabolic subgroups}
Denote by $R^n$ the free right $R$-module consisting of
columns of height $n$ with components from $R$. Similarly,
${}^n\!R$ denotes the free left $R$-module, consisting of
rows of length $n$ with components from $R$. The module ${}^n\! R$
is dual to $R^n$, with the pairing of ${}^n\! R$ and $R^n$
defined by the multiplication of a row by a column,
${}^n\! R\times R^n\map R$, $(v,u)\mapsto vu\in R$. The standard
based of $R^n$ and ${}^n\! R$ will be denoted by
$e_1,\ldots,e_n$ and $f_1,\ldots,f_n$, respectively.
Recall that $e_i$ is the column of height $n$, whose
$i$-th component equals 1, while all other components are
zeroes. Similarly, $f_i$ is the row of length $n$, whose
$i$-th component equals 1, while all other components are zeroes.
The base $f_1,\ldots,f_n$ is dual to $e_1,\ldots,e_n$,
with respect to the above pairing. The group $G=\GL(n,R)$
acts on $R^n$ on the left, by multiplication of a column
$u\in R^n$ by a matrix $g\in G$, $(g,u)\mapsto gu$.
By the same token, the group $G$ acts on ${}^n\! R$ on the
right by multiplication: $(v,g)\mapsto vg$ for $v\in{}^n\! R$,
$g\in G$.
\par
Denote by $P_m$ the $m$-th standard {\it maximal parabolic
subgroup\/} in $G=\GL(n,R)$. From a geometric viewpoint the
subgroup $P_i$, $m=1,\ldots,n-1$, is precisely the stabiliser
of the submodule $V_m$ in $V$, generated by
$e_1,\ldots,e_m$. In matrix form $P_m$ can be thought of as the
group of upper block triangular matrices
$$ P_m=\left\{
\begin{pmatrix} x&y\\ 0&z\\ \end{pmatrix} \mid x\in\GL(m,R), y\in
M(m,n-m,R), z\in\GL(n-m,R)\right\}. $$
\noindent
Simultaneously we consider the {\it opposite\/} maximal parabolic
subgroup $P_m^-$
$$ P_m^-=\left\{
\begin{pmatrix} x&0\\ w&z\\ \end{pmatrix} \mid x\in\GL(m,R), w\in
M(n-m,m,R), z\in\GL(n-m,R)\right\}. $$
\par
These subgroups admit Levi decompositions 
$P_m=L_m\rightthreetimes U_m$
and $P_m^-=L_m\rightthreetimes U_m^-$ with common Levi subgroup
$$ L_m=\left\{
\begin{pmatrix} x&y\\ 0&z\\ \end{pmatrix} \mid x\in\GL(i,R), 
z\in\GL(n-m,R)\right\}, $$
\noindent
and opposite unipotent radicals 
$$ U_m=\left\{
\begin{pmatrix} e&y\\ 0&e\\ \end{pmatrix} \mid y\in
M(m,n-m,R) \right\},\quad 
U_m^-=\left\{
\begin{pmatrix} e&0\\ w&e\\ \end{pmatrix} \mid w\in
M(n-m,m,R) \right\}. $$
\noindent
In particular, $L_m$, and thus all of its subgroups, normalise both
$U_m$ and $U_m^-$.

\subsection{Generation by two opposite unipotent radicals}
The following result asserts that $E(n,R)$ is generated by 
the unipotent radicals of two standard parabolic subgroups.
It is obvious from the Chevalley commutator formula, and well 
known.
Actually, in more general settings this is the {\it definition\/} 
of elementary subgroups, see the paper by Victor Petrov and
Anastasia Stavrova \cite{PS} and references 
there\footnote{Of course, with this advanced approach one 
has to prove that this definition is correct, in other words 
that various parabolic subgroups lead to the same elementary 
subgroup. This is precisely what is accomplished in \cite{PS}.}.

\begin{lemma}\label{l9}
Let $R$ be an associative ring with $1$, $n\ge 3$, and $1\le m\le n-1$. 
Then 
$$ E(n,R)=\langle U_m,U_m^-\rangle. $$
\end{lemma}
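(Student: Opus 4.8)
The plan is to establish the two inclusions separately; the inclusion $\langle U_m,U_m^-\rangle\le E(n,R)$ is immediate, since every matrix in $U_m$ or in $U_m^-$ is visibly a product of elementary transvections $t_{ij}(c)$ and hence lies in $E(n,R)$. So the real content is $E(n,R)\le\langle U_m,U_m^-\rangle$, and since $E(n,R)$ is generated by the $t_{ij}(c)$ with $1\le i\ne j\le n$ and $c\in R$, it suffices to show that each such transvection lies in $\langle U_m,U_m^-\rangle$.

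First I would note that $U_m$ is abelian and additively identified with $M(m,n-m,R)$, so that $U_m$ is generated by the transvections $t_{ij}(c)$ with $1\le i\le m<j\le n$; symmetrically $U_m^-$ is generated by the $t_{ij}(c)$ with $1\le j\le m<i\le n$. Thus $\langle U_m,U_m^-\rangle$ already contains every $t_{ij}(c)$ for which exactly one of the indices $i,j$ is $\le m$. It remains to handle the transvections $t_{ij}(c)$ with $i\ne j$ and both indices on the same side of $m$, and here the key tool is the elementary commutator identity $[t_{ih}(1),t_{hj}(c)]=t_{ij}(c)$, valid for pairwise distinct $i,j,h$ (the relevant special case of the Chevalley commutator formula).

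Concretely, if $i\ne j$ with $i,j\le m$, I would choose any index $h$ with $m<h\le n$ --- it exists because $m\le n-1$, and it is automatically distinct from $i$ and $j$; then $t_{ih}(1)\in U_m$ and $t_{hj}(c)\in U_m^-$, so $t_{ij}(c)=[t_{ih}(1),t_{hj}(c)]\in\langle U_m,U_m^-\rangle$. Symmetrically, if $i\ne j$ with $i,j>m$, choose any $h$ with $1\le h\le m$ (which exists since $m\ge 1$); then $t_{ih}(1)\in U_m^-$ and $t_{hj}(c)\in U_m$, and again $t_{ij}(c)=[t_{ih}(1),t_{hj}(c)]\in\langle U_m,U_m^-\rangle$. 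This exhausts all elementary transvections, giving $E(n,R)\le\langle U_m,U_m^-\rangle$ and hence equality. There is essentially no obstacle here; the only point requiring a moment's care is that the auxiliary index $h$ always exists and is distinct from $i$ and $j$, which is exactly what the hypothesis $1\le m\le n-1$ guarantees (and for $n\ge 3$ at least one of the two sides of $m$ contains two distinct indices, so the same-side case genuinely occurs).
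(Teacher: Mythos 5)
Your argument is correct and is precisely the one the paper has in mind: the paper gives no written proof, remarking only that the lemma ``is obvious from the Chevalley commutator formula,'' which is exactly the identity $[t_{ih}(1),t_{hj}(c)]=t_{ij}(c)$ you use to recover the same-side transvections from the two unipotent radicals. The index bookkeeping (existence of the auxiliary $h$ on the opposite side of $m$) is handled correctly.
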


What is important, is that both generators here are normalised
by the Levi subgroup $L_m$ and all of its subgroups.
\par
As usual, for $m<n$ we consider the stability embedding 
$$ \GL(m,R)\map\GL(n,R),\qquad 
g\mapsto \begin{pmatrix} g&0\\ 0&e\\ \end{pmatrix}. $$
\noindent
This embedding is compatible with elementary subgroups, 
congruence subgroups, relative elementary subgroups, etc.
When we consider $\GL(m,R,A)$, etc., as a subgroup of
$\GL(n,R)$ we always mean its image under this embedding.
\par

\par
\subsection{Unipotent radicals of level $A$}
Now, let $A\unlhd R$ be an ideal of $R$. We denote by $U_m(A)$ and $U_n^-(A)$ the intersections
of $U_m$ and $U_m^-$ with $\GL(n,R,A)$:
\begin{align*}
&U_m(A)=\left\{\begin{pmatrix} e&y\\ 0&e\\ \end{pmatrix}\mid
y\in M(m,n-m,A)\right\},\\
&U_m^-(A)=\left\{\begin{pmatrix} e&0\\ w&e\\ \end{pmatrix}\mid
w\in M(n-m,m,A) \right\}. 
\end{align*}

The following lemma is a direct corollary of the 
Levi decomposition for
$P_{m}$ and its opposite $P^-_{m}$.

\begin{lemma}\label{l10}
Let $R$ be an associative ring with $1$, $n\ge 3$, $1\le m\le n-1$, 
and let $A,C$ be two-sided ideals of $R$. Then one has
$$ [\GL(m,R,A),U_m(C)]\le U_m(AC),\qquad [
\GL(m,R,A),U_m^-(C)]\le U_m^-(CA). $$
\end{lemma}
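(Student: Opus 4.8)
The plan is to reduce the statement to a single block-matrix multiplication: the Levi decomposition $P_m=L_m\rightthreetimes U_m$ and its opposite exhibit the embedded $\GL(m,R)$ as a subgroup of $L_m$ normalising both $U_m$ and $U_m^-$, and we only need to compute the relevant commutators explicitly on elements of the prescribed levels. Under the stability embedding a typical element of $\GL(m,R,A)\le L_m$ has the block shape $\tilde g=\begin{pmatrix} g&0\\ 0&e\end{pmatrix}$ with $g\in\GL(m,R)$, $g\equiv e\pamod A$, say $g=e+\a$ with $\a\in M(m,m,A)$; a typical element of $U_m(C)$ is $u=\begin{pmatrix} e&y\\ 0&e\end{pmatrix}$ with $y\in M(m,n-m,C)$, and a typical element of $U_m^-(C)$ is $v=\begin{pmatrix} e&0\\ w&e\end{pmatrix}$ with $w\in M(n-m,m,C)$. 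I would first record two trivial but essential remarks: $M(m,n-m,AC)$ and $M(n-m,m,CA)$ are additive subgroups closed under the one-sided matrix multiplications occurring below, so that $U_m(AC)$ and $U_m^-(CA)$ are honest subgroups of $\GL(n,R)$; and, since $\GL(m,R,A)$ is a group, $g^{-1}\in\GL(m,R,A)$ too, whence $g^{-1}-e\in M(m,m,A)$.

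The core computation is then immediate. Multiplying out block-wise,
$$ [\tilde g,u]=\tilde g\,u\,\tilde g^{-1}u^{-1}
=\begin{pmatrix} e&gy\\ 0&e\end{pmatrix}\begin{pmatrix} e&-y\\ 0&e\end{pmatrix}
=\begin{pmatrix} e&(g-e)y\\ 0&e\end{pmatrix}
=\begin{pmatrix} e&\a y\\ 0&e\end{pmatrix}, $$
and $\a y\in M(m,n-m,AC)$ since each of its entries is a finite sum of products $ac$ with $a\in A$ and $c\in C$. Symmetrically,
$$ [\tilde g,v]=\tilde g\,v\,\tilde g^{-1}v^{-1}
=\begin{pmatrix} e&0\\ wg^{-1}&e\end{pmatrix}\begin{pmatrix} e&0\\ -w&e\end{pmatrix}
=\begin{pmatrix} e&0\\ w(g^{-1}-e)&e\end{pmatrix}, $$
and $w(g^{-1}-e)\in M(n-m,m,CA)$ by the remark on $g^{-1}-e$. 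Since $[\GL(m,R,A),U_m(C)]$ is generated by the elements $[\tilde g,u]$ and $[\GL(m,R,A),U_m^-(C)]$ by the elements $[\tilde g,v]$, and since the targets $U_m(AC)$, $U_m^-(CA)$ are subgroups, both inclusions follow at once.

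I do not anticipate any genuine obstacle: this is precisely the ``direct corollary of the Levi decomposition'' announced before the statement, and the computation is linear. The only steps worth a second glance are the passage from a generating set of commutators to the whole commutator subgroup (harmless once the targets are recognised as subgroups) and the role of the two-sidedness of $A$: it is exactly what forces $g^{-1}-e$ to land again in $M(m,m,A)$, so that it is $CA$, and nothing larger, that appears on the right of the second inclusion.
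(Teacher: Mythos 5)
Your computation coincides with the paper's own proof: both reduce the statement to the block-matrix identities $[\tilde g,u]=\left(\begin{smallmatrix} e&(g-e)y\\ 0&e\end{smallmatrix}\right)$ and $[\tilde g,v]=\left(\begin{smallmatrix} e&0\\ w(g^{-1}-e)&e\end{smallmatrix}\right)$ and then read off the levels $AC$ and $CA$. Your added remarks on $g^{-1}-e\in M(m,m,A)$ and on passing from generators to the whole commutator subgroup are correct and only make explicit what the paper leaves tacit.
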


\begin{proof} Let $g\in\GL(m,R,A)$, $y\in M(m,n-m,C)$, and
$w\in M(n-m,m,C)$. Then, clearly,
$$ \left[ \begin{pmatrix} g&0\\ 0&e\\ \end{pmatrix},
\begin{pmatrix} e&y\\ 0&e\\ \end{pmatrix}\right]=
\begin{pmatrix} e&(g-e)y\\ 0&e\\ \end{pmatrix},\quad
\left[ \begin{pmatrix} g&0\\ 0&e\\ \end{pmatrix},
\begin{pmatrix} e&0\\ w&e\\ \end{pmatrix}\right]=
\begin{pmatrix} e&0\\ w(g^{-1}-e)&e\\ \end{pmatrix}, $$
\noindent
where $y\equiv 0\pamod{C}$, $w\equiv 0\pamod{C}$ and
$g\equiv g^{-1}\equiv e\pamod{A}$. Then all outer-diagonal
entries of the matrices in the right hand sides are congruent
to $0$ modulo $AC$, as claimed.
\end{proof}


\section{Limiting the set of generators for $E(n,R,A)$}

Let us state a result by Wilberd van der Kallen, \cite{vdK-group}, 
Lemma 2.2. Morally, it is a trickier and mightier version of Lemma 1, 
with a smaller set of generators.

\begin{lemma}\label{l11}
Let $A\unlhd R$ be an ideal of an associative ring, $n\ge 3$. Then
as a subgroup $E(n,R,A)$ is generated by $E(n,A)$ and 
$z_{in}(a,d)$, for all $1\le i\le n-1$, $a\in A$, $d\in R$ 
\end{lemma}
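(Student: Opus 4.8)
The plan is to prove the lemma along the classical lines. Put $H$ for the subgroup of $E(n,R)$ generated by $E(n,A)$ together with all the last-column Stein--Tits--Vaserstein generators $z_{in}(a,d)$, $1\le i\le n-1$, $a\in A$, $d\in R$. Since each of these already lies in $E(n,R,A)$ --- the $z_{in}(a,d)$ by Lemma~\ref{l2}, and $E(n,A)$ by the very definition of $E(n,R,A)$ as a normal closure --- one has $H\le E(n,R,A)$ for free, and by Lemma~\ref{l2} the reverse inclusion reduces to the assertion that $z_{kl}(a,c)\in H$ for all $1\le k\neq l\le n$, $a\in A$, $c\in R$. I would treat separately the cases $l=n$; \ $k,l<n$; \ and $k=n$.

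First I would check that the unipotent radical $U_{n-1}=\langle t_{in}(c):1\le i\le n-1,\ c\in R\rangle$ normalises $H$. Conjugating a generator $t_{kl}(a)$ of $E(n,A)$ by $t_{pn}(c)$ and using $xyx^{-1}=[x,y]\,y$ together with the Chevalley commutator formula, the outcome is always $t_{kl}(a)$ times an elementary transvection of level $A$, the only exception being the $\SL_2$-block product $t_{pn}(c)\,t_{np}(a)\,t_{pn}(-c)=z_{pn}(a,c)$, which is itself a generator of $H$; conjugating $z_{in}(a,d)$ by $t_{in}(c)$ turns $d$ into $d+c$, while conjugating it by $t_{pn}(c)$ with $p\neq i$ produces $z_{in}(a,d)$ times transvections of level $A$ on the left. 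Hence $U_{n-1}$ normalises $H$. The case $l=n$ then needs no proof, and for $k,l<n$, since $n\ge 3$ the index $n$ is distinct from $k$ and $l$, so $t_{lk}(a)=[t_{ln}(1),t_{nk}(a)]$ gives
$$ z_{kl}(a,c)={}^{t_{kl}(c)}\big[t_{ln}(1),t_{nk}(a)\big]=\big[\,t_{kn}(c)t_{ln}(1),\ t_{nl}(-ac)t_{nk}(a)\,\big], $$
that is, $z_{kl}(a,c)=[w,h]$ with $w=t_{kn}(c)t_{ln}(1)\in U_{n-1}$ and $h=t_{nl}(-ac)t_{nk}(a)\in E(n,A)\le H$; since $U_{n-1}$ normalises $H$, $[w,h]={}^{w}h\cdot h^{-1}\in H$.

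The case $k=n$, $l<n$ is the crux. It is equivalent to showing $y_{ln}(a,c)=[t_{ln}(a),t_{nl}(c)]\in H$, because $z_{nl}(a,c)=y_{ln}(a,c)^{-1}\,t_{ln}(a)$ with $t_{ln}(a)\in E(n,A)$. Here I would pick a third index $h\notin\{l,n\}$, $h<n$ (available since $n\ge 3$), write $t_{ln}(a)=[t_{lh}(a),t_{hn}(1)]$, and apply the Hall--Witt identity to the left-normed commutator $[t_{lh}(a),t_{hn}(1),t_{nl}(c)]=y_{ln}(a,c)$. After the conjugations by $t_{lh}(a)\in E(n,A)\le H$ and by $t_{hn}(\pm1)\in U_{n-1}$ that the identity prescribes on its two other terms, each of those terms unwinds, through the Chevalley formula, into a product of level-$A$ transvections, of last-column generators of $H$, and of generators $z_{hl}(\ast,\ast)$ with both indices below $n$ --- the last of which belong to $H$ by the case just settled; a further conjugation by $t_{nl}(c)$ that appears in the process is checked by a direct Chevalley computation to stay within these same admissible pieces. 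Thus both other terms lie in $H$, hence so does their product, which is a conjugate of $y_{ln}(a,c)$ by $t_{lh}(a)\in H$; therefore $y_{ln}(a,c)\in H$ and $z_{nl}(a,c)\in H$, completing the proof.

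The step I expect to be the main obstacle is this last one. Conjugation by the ``lower'' transvection $t_{nl}(c)$ leads out of both $E(n,A)$ and the span of the retained last-column generators, so there is no normalisation argument to lean on; one is forced to expand through a third index via Hall--Witt (equivalently, via the three subgroups Lemma~\ref{l1}) and to keep careful account, through the Chevalley commutator formula, that every auxiliary transvection produced either carries its entry in $A$ or reassembles into one of the retained generators. This is also where --- as already in the case $k,l<n$ --- the hypothesis $n\ge 3$ is genuinely used. The resulting bookkeeping is routine but long, and it is the real content of the lemma.
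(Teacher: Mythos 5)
Your argument is sound, but note that the paper never proves Lemma~\ref{l11} directly: it is quoted from van der Kallen, and the paper instead establishes the more flexible Lemma~\ref{l12} (for any three distinct indices $i,j,h$, a subgroup $H\ge E(n,A)$ containing all $z_{ih}$ and $z_{jh}$, or all $z_{hi}$ and $z_{hj}$, contains all $z_{ij}$ and $z_{ji}$) by a single four-factor expansion of ${}^{t_{ji}(d)}[t_{ih}(a),t_{hj}(1)]$, from which Lemma~\ref{l11} and Theorem~\ref{t3} follow by iteration. Your route is organised by the position of the index $n$ instead, and for $k,l<n$ it is in fact cleaner than the paper's: once one checks, as you correctly do, that $U_{n-1}$ normalises $H$, the identity $z_{kl}(a,c)=[w,g]={}^{w}g\cdot g^{-1}$ with $w=t_{kn}(c)t_{ln}(1)\in U_{n-1}$ and $g=t_{nl}(-ac)t_{nk}(a)\in E(n,A)$ settles that case in two lines, with no commutator expansion at all. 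For $k=n$ you defer the Hall--Witt bookkeeping, but the shape you predict is right: with $x=t_{lh}(a)$, $y=t_{hn}(-1)$, $z=t_{nl}(-c)$ the two remaining Hall--Witt terms reduce, via the Chevalley commutator formula, to products of level-$A$ transvections with $z_{hn}(-ca,1)$ (a retained last-column generator) and with $z_{hl}(-a,-c)$ (covered by your previous case), and the outer conjugations by $t_{nl}(\pm c)$ and $t_{hn}(\pm1)$ contribute only further level-$A$ transvections, so the argument does close. The one thing needed before this could stand as a complete proof is to actually display that computation --- as you say yourself, it is the real content of the lemma --- or, more economically, to run the paper's single expansion from Lemma~\ref{l12}, which disposes of both of your nontrivial cases at once.
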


Generalisations of this result to unipotent radicals of parabolics
in arbitrary Chevalley groups were obtained by Alexei Stepanov in
\cite{Stepanov_calculus, Stepanov_nonabelian, Stepanov_universal},
(of course, in these papers $R$ was assumed commutative).

Below we extract the rationale behind these results by van der 
Kallen and Stepanov and prove a still stronger version of their
results, for the case of $\GL(n,R)$. Formally, it is not necessary
for the rest of the paper, Lemma 11 would suffice. Yet, it is so 
natural in itself, that it will be certainly prove useful for future
applications. Also, it is a midget version of our main result, Theorem~1,
with one ideal instead of three, and with $y_{ij}(ab,c)$ replaced 
by $z_{ij}(a,d)$. Actually, in the next section we breed it up to a 
life-size toy version of our main results. Eventually, we believe 
it should be taken as the correct definition of relative elementary 
groups in more general settings, viz., for isotropic reductive 
groups, see \cite{PS, Stavrova-Stepanov}.
\par
First, we reproduce the proof of Lemma 11 for $\GL(3,R)$.

\begin{lemma}\label{l12}
Let $A\unlhd R$ be an ideal of an associative ring, $n\ge 3$, and
let $i,j,h$ be three pair-wise distinct indices. Then if a subgroup
$E(n,A)\le H\le\GL(n,R)$ contains 
\par\smallskip
$\bullet$ either $z_{ih}(a,d)$ and $z_{jh}(a,d)$,
\par\smallskip
$\bullet$ or $z_{hi}(a,d)$ and $z_{hj}(a,d)$,
\par\smallskip\noindent
for all $a\in A$, $d\in R$, then it also contains $z_{ij}(a,d)$ 
and $z_{ji}(a,d)$, for all such $a$ and $d$.
\end{lemma}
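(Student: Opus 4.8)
The plan is to exploit the familiar conjugation formula
$z_{ij}(a,d)={}^{t_{ji}(d)}t_{ij}(a)$, so that the whole assertion reduces to the following statement: once $H\supseteq E(n,A)$ contains the listed generators, then $H$ is normalised by the transvections $t_{hi}(d)$ and $t_{hj}(d)$ (in the first case), respectively by $t_{ih}(d)$ and $t_{jh}(d)$ (in the second case), acting on the elementary generators of level $A$. Since $z_{ij}(a,d)=t_{ji}(d)t_{ij}(a)t_{ji}(-d)$ and $t_{ij}(a)\in E(n,A)\le H$, it suffices to show that $H$ is stable under conjugation by $t_{ji}(d)$; and by symmetry of the two bullets (swap the roles of the two indices in $\{i,j\}$ and replace upper by lower position, i.e. transpose), it is enough to treat one of the two cases, say the first.

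So assume $H$ contains $z_{ih}(a,d)$ and $z_{jh}(a,d)$ for all $a\in A$, $d\in R$. First I would note that the hypothesis already gives, by the case $d=0$ and by varying $d$, that $H$ contains $t_{ih}(a)$, $t_{jh}(a)$ and all their conjugates ${}^{t_{hi}(d)}t_{ih}(a)$, ${}^{t_{hj}(d)}t_{jh}(a)$ for $a\in A$. The key computation is then to express $z_{ij}(a,d)={}^{t_{ji}(d)}t_{ij}(a)$ in terms of commutators living in the column/row indexed by $h$, using $n\ge 3$ so that $h$ is available as a third index. Concretely, write $t_{ij}(a)=[t_{ih}(a),t_{hj}(1)]$ (Chevalley commutator formula in $\GL(n,R)$, valid since $a\in A$), and observe that $t_{hj}(1)$ normalises the relevant generators: $[t_{ji}(d),t_{hj}(1)]=t_{hi}(-d)\cdot(\text{already available terms})$, etc. Pushing the conjugation by $t_{ji}(d)$ through the commutator, each elementary factor that appears is, after the dust settles, either in $E(n,A)$, or of the form ${}^{t_{hi}(*)}t_{ih}(a)$ or ${}^{t_{hj}(*)}t_{jh}(b)$ with $a\in A$ or $ab\in A$ (hence in $A$), which by hypothesis lies in $H$. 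This is exactly the bookkeeping already carried out in the proof of Theorem~\ref{t2} and in Lemma~\ref{l11}; the present lemma is its $\GL(3,R)$-shaped core, with $h$ playing the role of the index $n$ there.

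The main obstacle, and the only place requiring genuine care, is the combinatorial bookkeeping in that last step: one must verify that \emph{every} elementary factor produced when commuting $t_{ji}(d)$ past the Chevalley-type expression for $t_{ij}(a)$ either already has level $A$ in a position that makes it a member of $E(n,A)$, or can be rewritten as one of the two allowed conjugated generators $z_{ih}(\cdot,\cdot)^{\pm1}$, $z_{jh}(\cdot,\cdot)^{\pm1}$ — no stray factor of the form ${}^{t_{ij}(*)}t_{ji}(a)$ may survive. Choosing the auxiliary index $h$ as the ``stable'' one (mimicking $n$ in Lemma~\ref{l11}) makes this work, because all newly created off-diagonal entries land in rows/columns $i$, $j$, $h$ in precisely the admissible configurations. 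Once the first bullet is done, the second follows by applying the first to the transpose-inverse of $H$ (equivalently, by the anti-automorphism $g\mapsto (g^{\mathrm t})^{-1}$ of $\GL(n,R)$, which swaps $t_{ij}\leftrightarrow t_{ji}$ and hence swaps $z_{ih}(a,d)\leftrightarrow z_{hi}(a,-d)$-type generators while preserving $E(n,A)$).
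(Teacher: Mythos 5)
Your overall strategy coincides with the paper's: pick the third index $h$, write $t_{ij}(a)=[t_{ih}(a),t_{hj}(1)]$, conjugate by $t_{ji}(d)$, expand the commutator by multiplicativity in both arguments, and check that every resulting factor lies in $H$. The problem is that you stop exactly where the proof begins. The entire content of the lemma is the verification you defer to ``bookkeeping'': your claim that after the dust settles every factor is either in $E(n,A)$ or of the form ${}^{t_{hi}(*)}t_{ih}(\cdot)$, ${}^{t_{hj}(*)}t_{jh}(\cdot)$ is asserted, not proved, and as stated it is not quite true. Carrying out the expansion as the paper does, one gets four factors, of which the first three do resolve into elements of $E(n,A)$ and genuine generators $z_{jh}(\cdot,1)$; but the fourth,
$$ {}^{t_{hj}(1)}[t_{ih}(a),t_{hi}(-d)]=t_{ih}(a)t_{ij}(-a)\cdot {}^{t_{hj}(1)}z_{ih}(-a,-d), $$
produces the stray term ${}^{t_{hj}(1)}z_{ih}(-a,-d)$ --- a conjugate of an allowed generator by a transvection in a third position, which is neither in $E(n,A)$ nor literally one of the hypothesised generators. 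Disposing of it needs an additional input: the paper invokes Theorem~A to show that this conjugate differs from $z_{ih}(-a,-d)$ only by a factor from $E(n,A)$. So the very worry you flag (``no stray factor may survive'') is justified, a stray factor does survive, and your proposal contains no argument for killing it. Without that step the proof is a sketch of the paper's proof, not a proof.

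Two smaller points. The opening ``reduction'' to showing that $H$ is normalised by $t_{ji}(d)$ is both stronger than what is needed and not what you subsequently do, so it should be dropped. And for the second bullet, ``the anti-automorphism $g\mapsto(g^{\mathrm t})^{-1}$ of $\GL(n,R)$'' does not exist over a noncommutative ring: transposition is an anti-homomorphism into $M(n,R^{\mathrm{op}})$, so you must either argue via the opposite ring (which works, since the first case is established for arbitrary associative rings and all hypotheses are stable under passing to $R^{\mathrm{op}}$) or, as the paper does, simply run the symmetric computation for the second configuration directly.
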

\begin{proof}
We prove the desired inclusions for the first case, the second one
is treated similarly. Indeed,
$$ z_{ij}(a,d)={}^{t_{ji}(d)}t_{ij}(a)={}^{t_{ji}(d)}[t_{ih}(a),t_{hj}(1)]=
[t_{ih}(a)t_{jh}(da),t_{hj}(1)t_{hi}(-d)]. $$
\noindent
Expanding the commutator with respect to the second factor,
we see that 
$$ z_{ij}(a,d)=[t_{ih}(a)t_{jh}(da),t_{hj}(1)]\cdot
{}^{t_{hj}(1)}[t_{ih}(a)t_{jh}(da),t_{hi}(-d)]. $$
\noindent
Now, expanding both commutators with respect to the first factor,
we see that 
\begin{multline*}
z_{ij}(a,d)={}^{t_{jh}(da)}[t_{ih}(a),t_{hj}(1)]\cdot
[t_{jh}(da),t_{hj}(1)]\cdot\\
{}^{t_{hj}(1)t_{ih}(a)}[t_{jh}(da),t_{hi}(-d)]\cdot
{}^{t_{hj}(1)}[t_{ih}(a),t_{hi}(-d)].
\end{multline*}
\noindent
Consider the four factors on the right hand side individually
\par\smallskip
$\bullet$ ${}^{t_{jh}(da)}[t_{ih}(a),t_{hj}(1)]=t_{ij}(a)t_{ih}(-ada)$,
\par\smallskip
$\bullet$ $[t_{jh}(da),t_{hj}(1)]=t_{jh}(da)\cdot z_{jh}(-da,1)$,
\par\smallskip
$\bullet$ ${}^{t_{hj}(1)t_{ih}(a)}[t_{jh}(da),t_{hi}(-d)]=
t_{ji}(-dad)t_{hi}(-dad)\cdot z_{jh}(dada,1)$,
\par\smallskip
$\bullet$ ${}^{t_{hj}(1)}[t_{ih}(a),t_{hi}(-d)]=
t_{ih}(a)t_{ij}(-a)\cdot {}^{t_{hj}(1)}z_{ih}(-a,-d)$, but by Theorem~A 
the last factor only differs from $z_{ih}(-a,-d)\in\GL(2,R,A)$ by a factor 
from $E(n,A)$.
\par\smallskip
We see that all factors only involve matrices from $E(n,A)$ and 
elementary conjugates of the form $z_{ih}(b,c)$ and $z_{jh}(b,c)$,
for some $b\in A$, $c\in R$, as claimed.
\end{proof}

\begin{theorem}\label{t3}
Let $R$ be an associative ring with identity $1$, $n\ge 3$,
and let $A$ be a two-sided ideal of $R$. Fix an $m$,
$1\le m\le n-1$. Then the relative elementary subgroup 
$E(n,R,A)$ is generated by 
$$ U_m^-(A)\qquad\text{and}\qquad 
uvu^{-1},\quad\text{where}\quad v\in U_m(A), u\in U_m^-. $$
\end{theorem}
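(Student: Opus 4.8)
The plan is to prove the two inclusions separately. Write $H$ for the subgroup of $\GL(n,R)$ generated by $U_m^-(A)$ together with all conjugates $uvu^{-1}$, $v\in U_m(A)$, $u\in U_m^-$. The inclusion $H\le E(n,R,A)$ is immediate: both $U_m(A)$ and $U_m^-(A)$ are abelian groups generated by the elementary transvections $t_{ij}(a)$, $a\in A$, contained in them (the matrix units sitting in one unipotent block multiply to zero), so $U_m(A),U_m^-(A)\le E(n,A)\le E(n,R,A)$; and since $u\in U_m^-\le E(n,R)$ normalises $E(n,R,A)$, every $uvu^{-1}$ again lies in $E(n,R,A)$. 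For the reverse inclusion it suffices, by Lemma~\ref{l2}, to show that $H$ contains $z_{ij}(a,c)$ for all $1\le i\ne j\le n$, $a\in A$, $c\in R$.

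To be able to invoke Lemma~\ref{l12} I must first check $E(n,A)\le H$. A transvection $t_{ij}(a)$ with $i\le m<j$ lies in $U_m(A)\le H$ (take $u=e$), and one with $j\le m<i$ lies in $U_m^-(A)\le H$. When $i,j$ lie in the same block, I would pick an index $h$ in the opposite block (which is nonempty since $1\le m\le n-1$) and split $t_{ij}(a)$ into two factors using the Steinberg relation $[t_{ih}(\xi),t_{hj}(\eta)]=t_{ij}(\xi\eta)$, together with the identity $[x,y]=x\cdot{}^y(x^{-1})$ when $i,j\le m$ and $[x,y]={}^xy\cdot y^{-1}$ when $i,j>m$; concretely one gets $t_{ij}(a)=t_{ih}(a)\cdot{}^{t_{hj}(1)}t_{ih}(-a)$ in the first case and $t_{ij}(a)={}^{t_{ih}(1)}t_{hj}(a)\cdot t_{hj}(-a)$ in the second. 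In each of these products one factor is a transvection from $U_m(A)\cup U_m^-(A)$ and the other a conjugate $uvu^{-1}$ with $u\in U_m^-$ and $v\in U_m(A)$, hence both factors lie in $H$; so $t_{ij}(a)\in H$ and therefore $E(n,A)\le H$.

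The key observation is that for $i>m\ge j$ one has $z_{ij}(a,c)={}^{t_{ij}(c)}t_{ji}(a)=uvu^{-1}$ with $u=t_{ij}(c)\in U_m^-$ and $v=t_{ji}(a)\in U_m(A)$, so every $z_{ij}(a,c)$ whose first index lies in the second block and whose second index lies in the first block already belongs to $H$. From this initial family I would recover all the $z_{ij}$ by three rounds of Lemma~\ref{l12}. Fixing an index $h\le m$, Lemma~\ref{l12} promotes $z_{ih},z_{jh}$ to $z_{ij},z_{ji}$ for all distinct $i,j>m$, so now every $z_{ij}(a,c)$ with $i>m$ lies in $H$. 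Fixing next an index $h_0>m$, all $z_{h_0 k}$ with $k\ne h_0$ are available, and Lemma~\ref{l12} yields $z_{ij},z_{ji}$ for every pair of indices disjoint from $\{h_0\}$. Finally the remaining generators $z_{ih_0}(a,c)$ with $i\le m$ are obtained by one more application of Lemma~\ref{l12} to the triple $(i,h_0,h_1)$, where an index $h_1\notin\{i,h_0\}$ exists because $n\ge3$. Once all the $z_{ij}(a,c)$ have been placed in $H$, Lemma~\ref{l2} gives $E(n,R,A)\le H$, and together with the first paragraph this completes the proof.

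The step I expect to be the main obstacle is this last bootstrap. Lemma~\ref{l12} needs three genuinely distinct indices, and the two blocks can degenerate: when $n-m=1$, so that the second block is just $\{n\}$, or $m=1$, so that the first block is just $\{1\}$, the auxiliary indices cannot be chosen with full freedom, and a handful of cases — for instance recovering $z_{in}(a,c)$ for $i\le n-1$ when $m=n-1$, through a triple $(i,n,h_1)$ with $h_1\le n-1$ and $h_1\ne i$ — have to be argued separately, though still with a single further use of Lemma~\ref{l12}. A minor nuisance is the bookkeeping in the second paragraph, where the two opposite forms of the commutator identity must be combined so that no transvection of level $R$ rather than $A$ escapes from $U_m$ or $U_m^-$. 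An alternative would be to show directly that $H\unlhd E(n,R)=\langle U_m,U_m^-\rangle$ (Lemma~\ref{l9}) and then combine with $E(n,A)\le H$ — invariance under the abelian group $U_m^-$ being trivial — but invariance under $U_m$ is not obvious, since $[U_m,U_m^-]$ spills out of $U_m^-$, and seems no easier.
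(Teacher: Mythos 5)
Your proposal is correct and follows essentially the same route as the paper: reduce to the Stein--Tits--Vaserstein generators via Lemma~\ref{l2}, show $E(n,A)\le H$ by exactly the same two block decompositions $t_{ij}(a)=t_{ih}(a)\cdot{}^{t_{hj}(1)}t_{ih}(-a)$ and $t_{ij}(a)={}^{t_{ih}(1)}t_{hj}(a)\cdot t_{hj}(-a)$, and then bootstrap all $z_{ij}(a,c)$ from the cross-block ones by repeated use of Lemma~\ref{l12}. The only differences are cosmetic --- you organise the Lemma~\ref{l12} bootstrap in three rounds rather than by cases on the positions of $i,j$ relative to $m$, and you read $z_{ij}(a,c)$ with the conjugator $t_{ij}(c)$ as displayed before Lemma~\ref{l2} (so your initial family is $i>m\ge j$ where the paper writes $i\le m<j$) --- and your worry about degenerate blocks is handled automatically by your scheme, just as the paper's $m\ge2$ versus $m\le n-2$ dichotomy is always resolvable for $n\ge3$.
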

\begin{proof}
Let $H$ be the group generated by the above elements. 
First, observe that $E(n,A)\le H$. Indeed, the following case analysis 
shows that $H$ contains all generators of $E(n,A)$:
\par\smallskip
$\bullet$
When $i\le m$ and $j\ge m+1$,
or when $i\ge m+1$ and $j\le m$, the generator 
$t_{ij}(a)=z_{ij}(a,0)$ belongs to $H$ by assumption. 
\par\smallskip
$\bullet$ When 
$i,j\le m$ take any $h\ge m+1$. Then
$t_{ij}(a)=t_{ih}(a)\cdot{}^{t_{hj}(1)}t_{ih}(-a)\in H$. 
\par\smallskip
$\bullet$ When $i,j\ge m+1$ take any $h\le m$. Then
$t_{ij}(a)={}^{t_{ih}(1)}t_{hj}(a)\cdot t_{hj}(-a)\in H$.
\par\smallskip
Now, we are done by repeated application of Lemma 12. Indeed,
$z_{ij}(a,d)\in H$ by assumption when $i\le m$, $j\ge m+1$.
\par\smallskip
$\bullet$ When $i,j\le m$ take any $h\ge m+1$. Then
$z_{ih}(a,d), z_{jh}(a,d)\in H$ and thus $z_{ij}(a,d),z_{ji}(a,d)\in H$
by the first item of Lemma 12.
\par\smallskip
$\bullet$ When $i,j\ge m+1$ take any $h\le m$. Then
$z_{hi}(a,d), z_{hj}(a,d)\in H$ and thus $z_{ij}(a,d),z_{ji}(a,d)\in H$
by the second item of Lemma 12.
\par\smallskip
Finally, when $i\ge m+1$ and $j\le m$, one has to distinguish two 
cases.
\par\smallskip
$\bullet$ If $m\ge 2$, one can choose $h\le m$, $h\neq j$. Then
$z_{hj}(a,d)\in H$ by assumption, whereas $z_{hi}(a,d)\in H$
by the first item above. Thus, $z_{ij}(a,d)\in H$ by the second item 
of Lemma 12.
\par\smallskip
$\bullet$ If $m\le n-2$, one can choose $h\ge m+1$, $h\neq j$.  Then
$z_{ih}(a,d)\in H$ by assumption, whereas $z_{jh}(a,d)\in H$
by the second item above. Thus, $z_{ij}(a,d)\in H$ by the first item 
of Lemma 12. 
\par\smallskip\noindent
It remains only to refer to Lemma~2 --- or Theorem~2, for that matter.
\end{proof}

\begin{corollary}
Let $R$ be an associative ring with identity $1$, $n\ge 3$,
and let $A$ be a two-sided ideal of $R$. Fix an $m$,
$1\le m\le n-1$. Then the relative elementary subgroup 
$E(n,R,A)$ is generated by the group $E(n,A)$ and the elements 
$z_{ij}(a,d)$, for all $i\neq j$, $1\le i\le m$, $m+1\le j\le n$.
$a\in A$, $d\in R$.
\end{corollary}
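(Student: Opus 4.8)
The plan is to read the statement off Theorem~\ref{t3} by keeping track of which hypotheses its proof actually consumes. Write $H$ for the subgroup of $\GL(n,R)$ generated by $E(n,A)$ together with all the elements $z_{ij}(a,d)$ with $1\le i\le m$, $m+1\le j\le n$, $a\in A$, $d\in R$; the goal is to prove $H=E(n,R,A)$, which I would establish by the two inclusions.

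The inclusion $H\le E(n,R,A)$ is immediate: $E(n,A)\le E(n,R,A)$ by the very definition of the relative elementary subgroup, and each $z_{ij}(a,d)={}^{t_{ji}(d)}t_{ij}(a)$ lies in $E(n,R,A)$ because the latter is normal in $E(n,R)$. For the reverse inclusion the key remark is that for $1\le i\le m$ and $m+1\le j\le n$ one has $t_{ij}(a)\in U_m(A)$ and $t_{ji}(d)\in U_m^-$, so $z_{ij}(a,d)={}^{t_{ji}(d)}t_{ij}(a)$ is literally one of the generators of the shape $uvu^{-1}$, $v\in U_m(A)$, $u\in U_m^-$, occurring in Theorem~\ref{t3}. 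Consequently both facts that the second half of the proof of Theorem~\ref{t3} relies on are at hand for our $H$: namely $E(n,A)\le H$ (here it is even simpler than in that proof, since $E(n,A)$ is a listed generator and no preliminary case analysis is needed) and $z_{ij}(a,d)\in H$ for $i\le m$, $j\ge m+1$.

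It then only remains to run the itemised case analysis from the proof of Theorem~\ref{t3} verbatim, with $H$ playing the role of the group there: repeated application of Lemma~\ref{l12} yields $z_{pq}(b,c)\in H$ first when $p,q\le m$, then when $p,q\ge m+1$, and finally when $p\ge m+1$ and $q\le m$, choosing in the last case the auxiliary index $h\le m$ when $m\ge 2$ and $h\ge m+1$ when $m\le n-2$ (one of these is always possible since $n\ge 3$). Thus $z_{pq}(b,c)\in H$ for all $1\le p\ne q\le n$, $b\in A$, $c\in R$, and Lemma~\ref{l2} gives $E(n,R,A)=\langle z_{pq}(b,c)\rangle\le H$. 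Combined with the first inclusion, this yields $H=E(n,R,A)$.

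There is essentially no obstacle here; the statement is a bookkeeping corollary of Theorem~\ref{t3}. The only point requiring a little care is that $H$ is \emph{not} literally the group described by the generating set of Theorem~\ref{t3}: the $z_{ij}$ with $i\le m$, $j\ge m+1$ realise only those conjugates $uvu^{-1}$ in which $u$ and $v$ are single transvections, not arbitrary elements of $U_m^-$ and $U_m(A)$. Hence one should not try to deduce $H\ge E(n,R,A)$ by a mere inclusion of generating sets; instead one reuses the Lemma~\ref{l12}-based reduction, which only ever needs the two weak facts recorded above together with Lemma~\ref{l2}. (A more computational route — expanding an arbitrary $uvu^{-1}$ via the Chevalley commutator formula and pushing the resulting commutators into $E(n,A)$ — also works but is messier and I would not pursue it.)
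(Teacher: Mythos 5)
Your proposal is correct and is exactly the intended derivation: the paper states this corollary without a separate proof precisely because the proof of Theorem~\ref{t3} only ever consumes the two facts you isolate ($E(n,A)\le H$ and $z_{ij}(a,d)\in H$ for $i\le m$, $j\ge m+1$), after which Lemma~\ref{l12} and Lemma~\ref{l2} finish the argument. Your cautionary remark that one must rerun the reduction rather than compare generating sets with Theorem~\ref{t3} verbatim is well taken and correctly placed.
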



\section{Three ideals lemma for $E(n,C,AB)$}

It is natural to ask, whether Theorem~2 admits a similar stronger
version. Unfortunately, the above proof of Theorem~3 does not 
generalise immediately to the partially relativised case.

\begin{problem}
Let $R$ be an associative ring with identity $1$, $n\ge 3$,
and let $A$, $B$ be a two-sided ideals of $R$. Fix an $m$,
$1\le m\le n-1$. Is the partially relativised elementary subgroup 
$E(n,B,A)$ generated by 
$$ U_m^-(A)\qquad\text{and}\qquad 
uvu^{-1},\quad\text{where}\quad v\in U_m(A), u\in U_m^-(B)? $$
\end{problem}

Instead, we prove the following refinement of Lemma~12, which
gives a full scale generalisation of Proposition 1, and a toy version
of our Theorems~1 and~6.

\begin{theorem}\label{t3bis}
Let $R$ be an associative ring with identity $1$, $n\ge 3$,
and let $A,B,C$ be a two-sided ideals of $R$. Then $E(n,C,AB)$
is contained in any of the following three spans:
\begin{multline*}
\big\langle E(n,BC,A), E(n,B,CA)\big\rangle,\qquad
\big\langle E(n,A,BC), E(n,CA,B)\big\rangle,\\
\big\langle E(n,BC,A), E(n,CA,B)\big\rangle. 
\end{multline*}
\end{theorem}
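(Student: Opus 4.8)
The plan is to rerun the formal computation from the proof of Theorem~2, now with the conjugating parameter restricted to lie in $C$, and then to check span by span that every factor that occurs stays inside the prescribed span.

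First I would reduce to generators. By Theorem~2 the group $E(n,C,AB)$ is generated by the elements $z_{ij}(x,c)$ with $1\le i\neq j\le n$, $x\in AB$, $c\in C$; since $z_{ij}(x+x',c)=z_{ij}(x,c)z_{ij}(x',c)$ and $AB$ is additively spanned by products $ab$ with $a\in A$, $b\in B$, it suffices to place every $z_{ij}(ab,c)$ into each of the three spans. Fixing a third index $h\neq i,j$ (possible since $n\ge 3$) and carrying out verbatim the calculation of the proof of Theorem~2 --- which was done there for $c$ ranging over all of $R$, so is a fortiori available for $c\in C$ --- I would obtain
$$ z_{ij}(ab,c)={}^{t_{jh}(ca)}u\cdot v,\qquad
u=y_{ih}(a,-bc)\, t_{hj}(-bcab)\, t_{ij}(ab),\quad
v=t_{ji}(-cabc)\,{}^{t_{hi}(-bc)}y_{jh}(ca,b), $$
where the two elementary commutators $y_{ih}(a,-bc)$ and $y_{jh}(ca,b)$ are left intact and everything else has been unfolded by multiplicativity of commutators and the Chevalley commutator formula (so that for instance ${}^{t_{jh}(ca)}t_{hj}(-bcab)=y_{jh}(ca,-bcab)\,t_{hj}(-bcab)$ and ${}^{t_{jh}(ca)}t_{ij}(ab)=t_{ih}(-abca)\,t_{ij}(ab)$).

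Then I would verify the three inclusions, which rests on three kinds of fact. (i) The inclusion preceding Proposition~1 gives $y_{ih}(a,-bc)\in[E(n,A),E(n,BC)]\le E(n,BC,A)\cap E(n,A,BC)$ and $y_{jh}(ca,b)\in[E(n,CA),E(n,B)]\le E(n,CA,B)\cap E(n,B,CA)$. (ii) Two-sidedness of $A,B,C$ gives $ab\in AB\subseteq A\cap B$, $bcab\in BCAB\subseteq A\cap B$, $cabc\in CABC\subseteq CA\cap BC$ and $abca\in ABCA\subseteq B$, so each transvection factor lies in $E(n,A)$, or in $E(n,B)$, or in $E(n,BC)$, as needed, using also $E(n,CA)\le E(n,A)$, $E(n,BC)\le E(n,B)$ and $E(n,AB)\le E(n,A)\cap E(n,B)$. (iii) A conjugating transvection is absorbed either because it already belongs to the span in question and the span is a subgroup, or --- for the outermost conjugation by $t_{jh}(ca)$ in the span $\langle E(n,A,BC),E(n,CA,B)\rangle$ --- by expanding ${}^{t_{jh}(ca)}u$ factor by factor, since $t_{jh}(ca)\in E(n,CA)\le E(n,A)$ normalises $E(n,A,BC)$ by the very definition of the latter, while the two pieces $y_{jh}(ca,-bcab)\in E(n,CA,B)$ and $t_{ih}(-abca)\in E(n,B)$ produced by that conjugation are again controlled by (i) and (ii). Going through the three spans one at a time, using throughout that $E(n,X,Y)\supseteq E(n,Y)$, that $E(n,Y)$ normalises $E(n,Y,Z)$, and that each span is a subgroup, I would conclude in each case that $u$, ${}^{t_{jh}(ca)}u$ and $v$ all lie in the span in question, which finishes the proof.

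The main obstacle is purely bookkeeping, and it comes from the asymmetry of the partially relativised subgroup: $E(n,X,Y)$ contains $E(n,Y)$ but in general not $E(n,X)$. Because of this the distribution of the individual factors between the two summands of a span is genuinely different in the three cases, and for $\langle E(n,A,BC),E(n,CA,B)\rangle$ one cannot simply assert that the conjugating element $t_{jh}(ca)$ lies in the span --- this is the one spot where an extra layer must be peeled off and the defining normalisation property of $E(n,A,BC)$ invoked. Everything else is the routine commutator calculus already used for Theorem~2.
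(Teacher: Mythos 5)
Your proposal is correct and follows essentially the same route as the paper: reduce to the generators $z_{ij}(ab,c)$ via Theorem~2, expand the conjugated commutator over a third index $h$, and check that the resulting pieces --- the two elementary commutators $y_{ih}(a,-bc)\in[E(n,A),E(n,BC)]$, $y_{jh}(ca,b)\in[E(n,CA),E(n,B)]$, the ``problematic'' factor $t_{ij}(ab)\in E(n,AB)$, and the short transvections --- land in each of the three spans. The only organizational difference is that the paper dismisses all elementary conjugations at once by invoking the normality of the double commutator subgroups in $E(n,R)$ (Lemma~4), whereas you track the conjugators $t_{jh}(ca)$ and $t_{hi}(-bc)$ by hand via the normalisation property of $E(n,X,Y)$; both work.
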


\begin{proof}
By Theorem~2 the group $E(n,C,AB)$ is generated by the
elementary commutators $z_{ij}(ab,c)$, where $a\in A$,
$b\in B$, $c\in C$. Now we imitate the proof of Lemma 12, but now 
monitor the levels of the occurring parameters of the $z_{rs}$'s in
the right hand side, 
rather than their positions. As in Lemma 12 we take and $h\neq i,j$
and rewrite the generator $z_{ij}(c,ab)$ as a commutator:
$$ z_{ij}(ab,c)={}^{t_{ji}(c)}t_{ij}(ab)=
{}^{t_{ji}(c)}[t_{ih}(a),t_{hj}(b)]=
[t_{ih}(a)t_{jh}(ca),t_{hj}(b)t_{hi}(-bc)]. $$
\noindent
Expanding the commutator with respect to the second factor,
we see that 
$$ z_{ij}(ab,c)=[t_{ih}(a)t_{jh}(ca),t_{hj}(b)]\cdot
{}^{t_{hj}(b)}[t_{ih}(a)t_{jh}(ca),t_{hi}(-bc)]. $$
\noindent
Now, expanding both commutators with respect to the first factor,
we see that 
\begin{multline*}
z_{ij}(ab,c)={}^{t_{jh}(ca)}[t_{ih}(a),t_{hj}(b)]\cdot
[t_{jh}(ca),t_{hj}(b)]\cdot\\
{}^{t_{hj}(b)t_{ih}(a)}[t_{jh}(ca),t_{hi}(-bc)]\cdot
{}^{t_{hj}(b)}[t_{ih}(a),t_{hi}(-bc)].
\end{multline*}
\noindent
Consider the four factors on the right hand side individually.
Observe that by Lemma 4 the commutator subgroup
$[E(n,A),E(n,B)]$ and other such double commutators are
normal in $E(n,R)$, so that we can ignore all occurring
elementary conjugations.
Amazingly, the only problematic factor is the first one!
\par\smallskip
$\bullet$ Clearly, ${}^{t_{jh}(ca)}[t_{ih}(a),t_{hj}(b)]=
t_{ij}(ab)t_{ih}(-abca)$ belongs to 
$$ E(n,AB)\le E(n,A)\cap E(n,B)\le E(n,BC,A)\cap E(n,CA,B). $$
\par
$\bullet$ Further, $[t_{jh}(ca),t_{hj}(b)]$ belongs to 
$$ [E(n,CA),E(n,B)]\le E(n,CA,B)\cap E(n,B,CA). $$
\par
$\bullet$ Next, ${}^{t_{hj}(b)t_{ih}(a)}[t_{jh}(ca),t_{hi}(-bc)]$
belongs to 
\begin{multline*}
[E(n,CA),E(n,BC)]\le E(n,CA,BC)\cap E(n,BC,CA)\le\\
E(n,A,BC)\cap E(n,B,CA)\cap E(n,BC,A)\cap E(n,CA,B).
\end{multline*}
\par
$\bullet$ Finally, ${}^{t_{hj}(b)}[t_{ih}(a),t_{hi}(-bc)]$
belongs to 
$$ [E(n,A),E(n,BC)]\le E(n,A,BC)\cap E(n,BC,A). $$
\par
We see that the third factor belongs to all four subgroups,
and can be discarded, whereas the other three factors are 
contained in two of the subgroups $E(n,BC,A)$, $E(n,B,CA)$, 
$E(n,A,BC)$, $E(n,CA,B)$, each. Inspecting the cases
listed in the statement, we see that all of them contain
all three factors.
\end{proof}

The other three possible combinations of the subgroups 
$E(n,BC,A)$, $E(n,B,CA)$, 
$E(n,A,BC)$, $E(n,CA,B)$, do not seem to work in general. 
Thus, for instance, $\big\langle E(n,A,BC), E(n,B,CA)\big\rangle$
does not contain the first factor, and so on.


\section{Stable version of the standard commutator formula}

We start with a slightly more general form of \cite{NZ2}, 
Lemma~3 and \cite{NZ3}. Lemma~9. Essentially, it is a
classical corollary of surjective  stability for $K_1$, but again 
we need a birelative version.

The following lemma is what stays behind \cite{NZ2}, 
Lemma 3, and \cite{NZ3}, Lemma 9. Our argument here is
both much more general, and much easier, since it avoids 
all explicit computations.

\begin{lemma}\label{l13}
Let $R$ be an associative ring with $1$, $n\ge 3$, and let $A$ 
be a two-sided ideal of $R$. Then for any  $g\in\GL(n-1,R,A)$ 
and any $x\in E(n,R)$ one has 
$$ {}^xg\equiv g\pamod{E(n,R,A)}. $$
\end{lemma}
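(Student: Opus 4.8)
The plan is to reduce the statement to the case where $x$ is a single elementary transvection $t_{ij}(c)$, and then to exploit the fact that $g$ lies in the stably embedded copy $\GL(n-1,R,A)$, so that one of its rows and one of its columns look like those of the identity. Since $E(n,R)$ is generated by the $t_{ij}(c)$ and since conjugation is multiplicative --- if ${}^{x}g\equiv g$ and ${}^{y}g\equiv g$ modulo the normal subgroup $E(n,R,A)$, then $E(n,R,A)$ being normal in $E(n,R)$ gives ${}^{xy}g\equiv {}^{x}g\equiv g$ --- it suffices to prove $[t_{ij}(c),g]\in E(n,R,A)$ for all $1\le i\ne j\le n$, $c\in R$, $g\in\GL(n-1,R,A)$. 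Here I use that $g\equiv e\pmod A$, so $[t_{ij}(c),g]\in\GL(n,R,A)$ automatically; the content is that this commutator is actually \emph{elementary} of level $A$.

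Next I would split into cases according to how $i,j$ meet the index $n$. If both $i,j\le n-1$, then $t_{ij}(c)\in\GL(n-1,R)=\E(n-1,R)\cdot(\text{diagonal})$ acts on $\GL(n-1,R,A)$, and by surjective $K_1$-stability (equivalently $\GL(n-1,R,A)=\GL(n-2,R,A)\cdot E(n-1,R,A)$ is false in general, but the relevant normality is classical) one knows $[\E(n-1,R),\GL(n-1,R,A)]\le E(n-1,R,A)\le E(n,R,A)$; more elementarily, $[t_{ij}(c),g]$ lies in $\GL(n-1,R,A)$ and a direct splitting argument inside degree $n-1$ --- writing the last index $n$ as the ``free'' coordinate --- puts it into $E(n,R,A)$. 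The two genuinely new cases are $i=n$, $j\le n-1$ and $i\le n-1$, $j=n$. Consider $x=t_{n j}(c)$ with $j\le n-1$; then ${}^{x}g$ differs from $g$ only in the last row, and one computes ${}^{x}g\cdot g^{-1}=e+c\,e_{nj}(\text{row }j\text{ of }g - \text{row }j\text{ of }e)(\ldots)$, i.e. the commutator is a product of transvections $t_{nk}(c(g_{jk}-\delta_{jk}))$ with $g_{jk}-\delta_{jk}\in A$, hence lies in $E(n,A)\le E(n,R,A)$. The case $x=t_{in}(c)$, $i\le n-1$, is symmetric, using that $g^{-1}$ also has last row and column those of $e$, so only the $n$-th column of ${}^{x}g$ is affected, and the commutator is a product of $t_{kn}((g'_{ki}-\delta_{ki})c)$ with entries of $g^{-1}-e$ in $A$.

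The main obstacle is the case $i,j\le n-1$: here the commutator $[t_{ij}(c),g]$ is a matrix in $\GL(n-1,R,A)$ that is \emph{not} visibly a short word in elementary generators, since $g$ is an arbitrary congruence matrix, not an elementary one. The clean way around this is precisely to avoid explicit computation, as the paragraph preceding the lemma advertises: one invokes the standard fact (surjective stability for $K_1$ together with normality of $E(n-1,R,A)$ in $\GL(n-1,R)$, which holds for $n-1\ge 2$ here since $n\ge 3$) that $[\GL(n-1,R),\GL(n-1,R,A)]\le E(n-1,R,A)$, and then $E(n-1,R,A)\le E(n,R,A)$ under the stability embedding finishes it. Alternatively, one reduces $g$ modulo $E(n-1,R,A)$ to a matrix of the form $\diag(h,1)$ with $h\in\GL(n-2,R,A)$ and repeats the first two cases; I would present whichever is shorter, but flag that this is the only step where one leans on a previously known stability result rather than on a bare-hands elementary manipulation.
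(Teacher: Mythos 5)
Your reduction to a single transvection and your treatment of the two ``mixed'' cases $x=t_{nj}(c)$ and $x=t_{in}(c)$ are fine and coincide in substance with the paper's Lemma~\ref{l10}: the commutator $[g,z]$ with $z\in U_{n-1}$ or $z\in U_{n-1}^-$ lands in $U_{n-1}(A)$ or $U_{n-1}^-(A)$, hence in $E(n,A)$. The genuine gap is your case $i,j\le n-1$. There you propose to invoke $[\GL(n-1,R),\GL(n-1,R,A)]\le E(n-1,R,A)$ (via surjective stability for $\K_1$ and normality of $E(n-1,R,A)$ in $\GL(n-1,R)$), or alternatively to reduce $g$ to $\diag(h,1)$ with $h\in\GL(n-2,R,A)$. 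Neither tool is available here: the lemma is asserted for an \emph{arbitrary} associative ring, where normality of the relative elementary subgroup and surjective stability both fail in general (for $n=3$ you would even need them in degree $2$), and the whole point of this section, as announced just before the lemma, is to avoid exactly these deep inputs. An arbitrary element of $\GL(n-1,R,A)$ need not lie in $E(n,R,A)$, so ``the commutator lies in $\GL(n-1,R,A)$ plus a splitting argument'' does not close the case.

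The paper sidesteps this case entirely by choosing a better generating set: by Lemma~\ref{l9}, $E(n,R)=\langle U_{n-1},U_{n-1}^-\rangle$, so one writes $x=y_1\cdots y_m$ with each $y_i$ in $U_{n-1}$ or $U_{n-1}^-$ and inducts on $m$, using only the two computations you already did correctly plus normality of $E(n,R,A)$ in $E(n,R)$. Your problematic transvections $t_{ij}(c)$ with $i,j\le n-1$ never occur as generators, since, e.g., $t_{ij}(c)=[t_{in}(c),t_{nj}(1)]$ is itself a product of elements of $U_{n-1}\cup U_{n-1}^-$. If you replace your third case by this observation, your argument becomes the paper's proof.
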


\begin{proof}
By Lemma~\ref{l9} any $x\in E(n,R)$ can be expressed as a product
$x=y_1\ldots y_m$, where $y_i$ alternatively belong to 
$U_{n-1}$ or $U^-_{n-1}$. Consider a shorts such product. We argue 
by induction on $m$. 
\par
Let $x=yz$, where $y\in E(n,R)$ is shorter than $x$, whereas
$z\in U_{n-1}$ or $z\in U^-_{n-1}$.  By Lemma~\ref{l10}
$[g,z]\in U_{n-1}(A)$ in the first case, and 
$[g,z]\in U_{n-1}^-(A)$ in the first case. Since $U_{n-1}(A),
U_{n-1}^-(A)\le E(n,A)\le E(n,R,A)$, this means that 
${}^zg\equiv g\pamod{E(n,R,A)}$.
This means that ${}^xg\equiv {}^yg\pamod{E(n,R,A)}$.
But ${}^yg\equiv g\pamod{E(n,R,A)}$ by induction hypothesis.
\end{proof}

Of course, one would love to have a similar {\it birelative} lemma,
asserting that for any $g\in\GL(n-1,R,A)$ and any $x\in E(n,R,C)$ 
one has ${}^xg\equiv g\pamod{E(n,R,A)}$. This would give 
plenty of leverage, to establish very strong results, including 
Theorem~1, with minimum direct calculations. 

Unfortunately, it seems that such a lemma does not hold. What 
we can see easily, is only the weaker congruence 
${}^xg\equiv g\pamod{[E(n,A),E(n,C)]}$. The following
result is a version of the standard commutator formula that
survives for arbitrary associative rings. Various forms of this
result are known for decades, since the groundbreaking paper 
by Hyman Bass \cite{Bass_stable}, and the refinements by Alec
Mason and Wilson Stothers \cite{Mason_Stothers}, see our 
exposition in \cite{Hazrat_Vavilov_Zhang}. However, the proofs
proceeded as follows. First, one established a more sophisticated 
double relative version of Whitehead lemma, and then invoked
deep results, such as Bass---Vaserstein injective stability for 
$\K_1$. Our proof below is entirely elementary, works for all
associative rings, and only uses the sharp generation results 
obtained in the previous sections.

\begin{theorem}\label{t4}
Let $R$ be an associative ring with $1$, $n\ge 3$, and let $A$ 
and $C$ be two-sided ideals of $R$. Then 
$$ [\GL(n-1,R,A),E(n,R,C)]=[E(n,A), E(n,C)]. $$
\end{theorem}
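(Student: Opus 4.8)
The plan is to establish the two inclusions separately, of which the inclusion $[E(n,A),E(n,C)]\le[\GL(n-1,R,A),E(n,R,C)]$ is the trivial one: indeed $E(n-1,A)\le\GL(n-1,R,A)$ and $E(n,C)\le E(n,R,C)$, so it suffices to see that $[E(n,A),E(n,C)]=[E(n-1,A),E(n,C)]$, which in turn follows because $E(n,A)$ normalises $E(n,C)$ modulo $[E(n,A),E(n,C)]$, and by Lemma~\ref{l4} this double commutator is already $[E(n-1,A),E(n,C)]$ — alternatively one simply observes that any $t_{ij}(a)$ with $i=n$ or $j=n$ can be rewritten, via $E(n,C)$, in terms of transvections avoiding the last index, exactly as in the proof of Theorem~\ref{t3}. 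So the real content is the reverse inclusion.

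For the reverse inclusion I would take a commutator $[g,x]$ with $g\in\GL(n-1,R,A)$ and $x\in E(n,R,C)$ and show it lies in $[E(n,A),E(n,C)]$. The first reduction is to use Lemma~\ref{l13} (applied with $C$ in place of $A$, or its evident variant) together with the generators of $E(n,R,C)$ from Lemma~\ref{l3}: since $[E(n,A),E(n,C)]$ is normal in $E(n,R)$ by Lemma~\ref{l4}, and since by multiplicativity of the commutator $[g,x_1x_2]=[g,x_1]\cdot{}^{x_1}[g,x_2]$, it is enough to treat $x$ running over the generators $z_{ij}(cr,r')$, $z_{ij}(rc,r')$ and $y_{ij}(c,r)$ of $E(n,R,C)$ — here I am using $C$ for both ideals, i.e.\ taking the ``$B=R$'' degenerate shape of Lemma~\ref{l3}, so the generators are just $t_{ij}(c)$ conjugated by $t_{ji}(r)$ and the elementary commutators. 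Actually the cleanest route is: by Lemma~\ref{l13} every $x\in E(n,R)$ moves $g$ only modulo $E(n,R,A)$, but we want modulo $[E(n,A),E(n,C)]$, so we must exploit that $x$ has level $C$. Concretely, write $x$ as a product of STV-generators $z_{ij}(c,r)={}^{t_{ij}(r)}t_{ji}(c)$ with $c\in C$ (Lemma~\ref{l2}), and push the conjugation through: $[g,{}^{t_{ij}(r)}t_{ji}(c)]$ is an $E(n,R)$-conjugate of $[{}^{t_{ij}(-r)}g,\,t_{ji}(c)]$, and ${}^{t_{ij}(-r)}g$ differs from a matrix in $\GL(n-1,R,A)$ by an elementary factor of level $A$ lying in $E(n,A)$ — at which point the commutator with $t_{ji}(c)$ visibly lands in $[\GL(n-1,R,A),E(n,C)]$ up to $[E(n,A),E(n,C)]$.

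So the heart of the matter, and the step I expect to be the main obstacle, is the base case: showing $[\GL(n-1,R,A),t_{ij}(c)]\le[E(n,A),E(n,C)]$ for a single transvection $t_{ij}(c)$ with $c\in C$. Here I would split on the position of $(i,j)$ relative to the block decomposition $\GL(n-1,R)\times\{n\}$. If $i,j\le n-1$, then $t_{ij}(c)\in E(n-1,C)$ normalises $\GL(n-1,R)$, and the commutator lies in $[\GL(n-1,R,A),E(n-1,C)]$; by the stable standard commutator formula applied \emph{inside} $\GL(n-1,R)$ — wait, that is circular, so instead one uses directly that $[\GL(n-1,R,A),E(n-1,C)]\le E(n-1,R,A\circ C)\le E(n,A)\cap E(n,C)$ once we know $\GL(n-1,R,A)$ normalises $E(n-1,R,C)$, together with Lemma~\ref{l5}; more carefully, one writes $g\in\GL(n-1,R,A)$ and computes $[g,t_{ij}(c)]$ using that $g$ has the block form $\left(\begin{smallmatrix} g_0 & 0\\ 0 & 1\end{smallmatrix}\right)$ with $g_0\in\GL(n-1,R,A)$, reducing to the classical fact that $[\GL(m,R,A),E(m,R,C)]\subseteq E(m,R,A\circ C)\cdot[E(m,A),E(m,C)]$ — but since that is the statement we are proving one dimension down, induction on $n$ is in fact the right framework, with the $n=3$ case handled by the explicit generator computations of \S~\ref{} (Lemma~\ref{l3} for $\GL(3,R)$). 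The cases where $i=n$ or $j=n$ are the genuinely new ones: there $t_{ij}(c)\in U_{n-1}^-(C)$ or $U_{n-1}(C)$, and Lemma~\ref{l10} gives $[\GL(n-1,R,A),U_{n-1}^{\pm}(C)]\le U_{n-1}^{\pm}(A\circ C)\le E(n,A\circ C)\le[E(n,A),E(n,C)]$ by Lemma~\ref{l5} — this is immediate. Thus the only real work is the ``$\GL(n-1)\times\GL(n-1)$'' corner, handled by induction on $n$ feeding off Lemma~\ref{l3}; everything else is Lemma~\ref{l10} plus normality from Lemma~\ref{l4}.
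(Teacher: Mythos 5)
Your reduction to single generators and your treatment of the ``off-diagonal'' cases are fine: for $t_{ij}(c)$ with $i=n$ or $j=n$ Lemma~\ref{l10} indeed puts $[g,t_{ij}(c)]$ into $U_{n-1}(AC)$ or $U_{n-1}^-(CA)$, hence into $[E(n,A),E(n,C)]$ by Lemma~\ref{l5}, and conjugations can be absorbed by the normality from Lemma~\ref{l4}. You also correctly identify the remaining corner, $[\GL(n-1,R,A),E(n-1,C)]$ with both indices $\le n-1$, as the real difficulty. But your proposed resolution of that corner --- induction on $n$ --- does not close: the inductive hypothesis one dimension down concerns $[\GL(n-2,R,A),E(n-1,R,C)]$, i.e.\ a group $\GL(n-2,R,A)$ strictly smaller than the $\GL(n-1,R,A)$ you need to commute with $E(n-1,C)$. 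There is no stability gain left to exploit, so the corner case is essentially the full standard commutator formula in degree $n-1$, which is exactly what one cannot assume. You flag the circularity yourself mid-argument but then fall back on the same induction, so the gap remains.

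The paper's proof avoids the corner altogether, and this is the idea missing from your proposal: instead of generating $E(n,R,C)$ by all Stein--Tits--Vaserstein elements $z_{ij}(c,r)$ (Lemma~\ref{l2}), it invokes Theorem~\ref{t3}, by which $E(n,R,C)$ is already generated by $U_{n-1}^-(C)$ together with the conjugates $uvu^{-1}$, $v\in U_{n-1}(C)$, $u\in U_{n-1}^-$. Every such generator is assembled from unipotent-radical pieces, so each commutator with $g\in\GL(n-1,R,A)$ is controlled by Lemma~\ref{l10}: expanding
$$ [g,uvu^{-1}]=[g,u]\cdot{}^u[g,v]\cdot{}^u[v,[g,u^{-1}]]\cdot{}^u[g,u^{-1}], $$
one finds $[g,v]\in E(n,AC)$, $[g,u^{-1}]\in E(n,A)$ so that $[v,[g,u^{-1}]]\in[E(n,A),E(n,C)]$, and the outer factors $[g,u]$ and ${}^u[g,u^{-1}]=[g,u]^{-1}$ cancel modulo the normal subgroup $[E(n,A),E(n,C)]$. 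This cancellation of the level-$A$-only terms in pairs is precisely the bookkeeping your sketch would also have to perform if you decomposed $t_{ij}(c)$ ($i,j\le n-1$) as $[t_{in}(c),t_{nj}(1)]$, since $[g,t_{nj}(1)]$ only has level $A$, not $A\circ C$. If you replace your induction by an appeal to Theorem~\ref{t3} and this expansion, your argument becomes the paper's.
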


\begin{proof}
Indeed, by Theorem~\ref{t3} the group $E(n,R,C)$ is generated by 
$w\in U_{n-1}^-(C)$ and by
$uvu^{-1}$, where $v\in U_{n-1}(C)$, $u\in U_{n-1}^-$. Take an arbitrary
$g\in\GL(n-1,R,A)$. Then $[g,w]\in E(n,CA)$ by Lemma 10. On the
other hand, for the other type of generators one has
$$ [g,uvu^{-1}]=[g,u]\cdot {}^u[g,v]\cdot {}^{uv}[g,u^{-1}]=
[g,u]\cdot {}^u[g,v]\cdot {}^{u}[v,[g,u^{-1}]]\cdot {}^u[g,u^{-1}]. $$
\noindent
Now, by Lemma 11 one has $[g,v]\in E(n,AC)$, so that 
${}^u[g,v]\in E(n,R,AC)$. 
Similarly, $[g,u^{-1}]\in E(n,A)$, so that
$[v,[g,u^{-1}]]\in [E(n,A),E(n,C)]$. It follows from Lemma 4
(but was known before, in fact), that $[E(n,A),E(n,C)]$ 
is normal in $E(n,R)$. Thus, ${}^{u}[v,[g,u^{-1}]]\in [E(n,A),E(n,C)]$.
\par
By Lemma 5, one has $E(n,R,AC)\le [E(n,A),E(n,C)]$ so that both
central factors belong to $[E(n,A),E(n,C)]$. On the other hand,
${}^u[g,u^{-1}]=[g,u]^{-1}$. Again invoking the fact that
$[E(n,A),E(n,C)]$ is normal in $E(n,R)$ we see that the commutator
$[g,uvu^{-1}]$ belongs to $[E(n,A),E(n,C)]$. 
\par
Since the elements $uvu^{-1}$ generate $E(n,R,C)$ and are
themselves elementary, the left hand side of the equality in the
statement of the theorem is contained in the right hand side. The
other inclusion is obvious.
\end{proof}

Of course, when surjective stability holds for $\K_1(n-1,R,A)$, one
has 
$$ \GL(n,R,A)=\GL(n-1,R,A)E(n,R,A), $$
\noindent
so that Theorem~4 implies the usual standard commutator formula
$$ [\GL(n,R,A),E(n,R,C)]=[E(n,A), E(n,C)]. $$
\noindent
Otherwise, we use Theorem~4 in the following form.

\begin{corollary}
Let $R$ be an associative ring with $1$, $n\ge 3$, and let $A$ 
and $C$ be two-sided ideals of $R$. Then for any  $g\in\GL(n-1,R,A)$ 
and any $x\in E(n,R,C)$ one has 
$$ {}^xg\equiv g\pamod{[E(n,A), E(n,C)]}. $$
\end{corollary}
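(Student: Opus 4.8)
The plan is to read the congruence off directly from Theorem~\ref{t4}, so that no new computation is needed. First I would use the commutator identity ${}^xy=[x,y]\cdot y$ recorded in \S2 to rewrite the left translation as
$$ {}^xg={}^xg\cdot g^{-1}\cdot g=[x,g]\cdot g, $$
so that the asserted congruence ${}^xg\equiv g\pamod{[E(n,A),E(n,C)]}$ is literally equivalent to the single membership $[x,g]\in[E(n,A),E(n,C)]$.

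Next I would observe that, by hypothesis, $g\in\GL(n-1,R,A)$ and $x\in E(n,R,C)$, so that $[x,g]$ lies in the mutual commutator subgroup $[E(n,R,C),\GL(n-1,R,A)]$. Since mutual commutator subgroups are symmetric, this subgroup coincides with $[\GL(n-1,R,A),E(n,R,C)]$, which by Theorem~\ref{t4} equals exactly $[E(n,A),E(n,C)]$. Hence $[x,g]\in[E(n,A),E(n,C)]$, and the congruence follows. One small point worth spelling out is that the notation $\pamod{[E(n,A),E(n,C)]}$ is unambiguous only because the subgroup in question is normal in the ambient group $E(n,R)$; this is guaranteed by Lemma~\ref{l4}, which identifies $[E(n,A),E(n,C)]$ with $[E(n,R,A),E(n,R,C)]$, a normal subgroup of $E(n,R)$, so that left and right cosets coincide and the statement is well posed.

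I do not expect any genuine obstacle here: the entire substance — the reduction of $E(n,R,C)$ to $U_{n-1}^-(C)$ together with the conjugated generators $uvu^{-1}$, the level bookkeeping through Lemmas~\ref{l10}, \ref{l11} and \ref{l5}, and the normality input from Lemma~\ref{l4} — has already been discharged in the proof of Theorem~\ref{t4}. The corollary is therefore a purely formal recasting of that theorem in the ``modulo'' form that is convenient for the applications in \S\S10--14, and the only care required is the trivial commutator-identity manipulation above and the normality remark.
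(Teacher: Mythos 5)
Your proposal is correct and coincides with the paper's (implicit) argument: the corollary is stated as an immediate reformulation of Theorem~4, obtained exactly as you do by writing ${}^xg=[x,g]\cdot g$ and noting $[x,g]\in[\GL(n-1,R,A),E(n,R,C)]=[E(n,A),E(n,C)]$. Your extra remark on normality (via Lemma~4) making the congruence well posed is a sensible, if routine, addition.
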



\section{Elementary commutators modulo $E(n,R,A\circ B)$}

In the present section we collect special cases of the previous 
results concerning the behaviour of elementary commutators 
modulo the level. 

Since the elementary commutator $y_{ij}(a,b)$, where 
$1\le i\neq j\le n$, $a\in A$, $b\in B$, has level $A\circ B$,
we get the following result, which is \cite{NZ2}, 
Lemma 3, and \cite{NZ3}, Lemma 9. 

\begin{lemma}\label{l14}
Let $R$ be an associative ring with $1$, $n\ge 3$, and let $A,B$ 
be two-sided ideals of $R$. Then for any  $1\le i\neq j\le n$, 
$a\in A$, $b\in B$, and any $x\in E(n,R)$ one has
$$ {}^x y_{ij}(a,b)\equiv y_{ij}(a,b) 
\pamod{E(n,R,A\circ B)}. $$
\end{lemma}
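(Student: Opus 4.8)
The plan is to reduce the statement to the special case already available, namely Lemma~\ref{l9} on generation of $E(n,R)$ by two opposite unipotent radicals, together with the conjugation formula for matrices in $U_{n-1}^{\pm}$. First I would recall that the elementary commutator $y_{ij}(a,b)=[t_{ij}(a),t_{ji}(b)]$ lies in $\GL(n,R,A\circ B)$ by Lemma~\ref{l5}; more precisely, a direct matrix computation shows that $y_{ij}(a,b)$ is, up to an elementary factor of level $A\circ B$, supported on a $2\times 2$ block, so that $y_{ij}(a,b)\in\GL(2,R,A\circ B)$ in the appropriate sense. Since $2\le n-1$ when $n\ge 3$, we may regard $y_{ij}(a,b)$ as an element of $\GL(n-1,R,A\circ B)$ sitting inside $\GL(n,R)$ via the stability embedding, after first conjugating by a suitable permutation-type elementary element to move the pair $(i,j)$ into the first $n-1$ coordinates; this conjugation only changes $y_{ij}(a,b)$ by something in $E(n,R,A\circ B)$ anyway, so it is harmless.

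Next I would apply Lemma~\ref{l13} directly: taking the ideal there to be $A\circ B$ and $g=y_{ij}(a,b)\in\GL(n-1,R,A\circ B)$, it gives ${}^x y_{ij}(a,b)\equiv y_{ij}(a,b)\pamod{E(n,R,A\circ B)}$ for every $x\in E(n,R)$, which is exactly the claim. Alternatively, and more self-containedly, one can simply rerun the induction in the proof of Lemma~\ref{l13} with $y_{ij}(a,b)$ in place of $g$: write $x=y_1\cdots y_m$ with each $y_k\in U_{n-1}\cup U_{n-1}^-$ by Lemma~\ref{l9}, induct on $m$, and for the single-factor step use Lemma~\ref{l10} to see that $[y_{ij}(a,b),z]$ lies in $U_{n-1}(A\circ B)$ or $U_{n-1}^-(A\circ B)$, hence in $E(n,A\circ B)\le E(n,R,A\circ B)$.

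The only real point requiring care — and the step I expect to be the main obstacle — is the bookkeeping that places $y_{ij}(a,b)$ inside $\GL(n-1,R,A\circ B)$: one must check that, after possibly conjugating by an elementary element to arrange $i,j\le n-1$, the commutator $y_{ij}(a,b)$ genuinely has all its ``moving'' entries in rows and columns indexed by $1,\dots,n-1$ and that these entries lie in $A\circ B=AB+BA$. This is the content of the standard level computation behind Lemma~\ref{l5}, and it is routine: expanding $[t_{ij}(a),t_{ji}(b)]=(e+ae_{ij})(e+be_{ji})(e-ae_{ij})(e-be_{ji})$ one sees the off-diagonal entries involve only products $ab$ and $ba$ and the diagonal entries lie in $1+A\circ B$, all within the $(i,j)$-block. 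Once this is in hand, the reduction to Lemma~\ref{l13} is immediate, so the proof is short. In the paper's own style, one would just cite the relevant earlier results: this lemma is an instance of Lemma~\ref{l13} applied with ideal $A\circ B$ to the element $y_{ij}(a,b)\in\GL(n-1,R,A\circ B)$.
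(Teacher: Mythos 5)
Your proposal is correct and is essentially the paper's own argument: Lemma~\ref{l14} is obtained by combining the level computation showing $y_{ij}(a,b)\in\GL(n,R,A\circ B)$ with support in the $\{i,j\}$-block, and the conjugation Lemma~\ref{l13} for $\GL(n-1,R,A\circ B)$, whose proof via Lemmas~\ref{l9} and~\ref{l10} is exactly the induction you describe. One caution: your justification that the preliminary permutation conjugation ``only changes $y_{ij}(a,b)$ by something in $E(n,R,A\circ B)$'' is circular, since that is an instance of the lemma being proved; the clean fix is to note that Lemma~\ref{l13} (and Lemmas~\ref{l9},~\ref{l10}) hold verbatim for the copy of $\GL(n-1,R)$ supported on the coordinates $\{1,\dots,n\}\setminus\{h\}$ for any $h\neq i,j$, so no conjugation is needed at all.
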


It is well known that the absolute elementary group $E(n,R)$
contains all permutation matrices, maybe after correcting the
sign of one entry. Thus, already this lemma implies that 
elementary commutators $y_{ij}(a,b)$ and $y_{hk}(a,b)$
are congruent modulo $E(n,R,A\circ B)$. Of course, we still
need Theorem~A, since we need to move around not only
the indices, but also the parameters. 

The following result is \cite{NZ3}, Lemmas 10 and 12.

\begin{lemma}\label{l15}
Let $R$ be an associative ring with $1$, $n\ge 3$, and let $A,B$ 
be two-sided ideals of $R$. Then for any  $1\le i\neq j\le n$, 
$a,a_1,a_2\in A$, $b,b_1,b_2\in B$ one has
\begin{align*}
&y_{ij}(a_1+a_2,b)\equiv  y_{ij}(a_1,b)\cdot y_{ij}(a_1,b) 
\pamod{E(n,R,A\circ B)},\\
&y_{ij}(a,b_1+b_2)\equiv  y_{ij}(a,b_1)\cdot y_{ij}(a,b_2) 
\pamod{E(n,R,A\circ B)},\\
&y_{ij}(a,b)^{-1}\equiv  y_{ij}(-a,b)\equiv y_{ij}(a,-b) 
\pamod{E(n,R,A\circ B)},\\
&y_{ij}(ab_1,b_2)\equiv y_{ij}(a_1,a_2b)\equiv e
\pamod{E(n,R,A\circ B)},\\
&y_{ij}(a_1a_2,b)\equiv y_{ij}(a,b_1b_2)\equiv e
\pamod{E(n,R,A\circ B)}. 
\end{align*}
\end{lemma}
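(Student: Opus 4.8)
The plan is to derive all five families of congruences from four ingredients only: the multiplicativity of commutators, the conjugation invariance of the elementary commutators modulo the level (Lemma~\ref{l14}), the description of $[E(n,A),E(n,B)]$ in Lemma~\ref{l3}, and the trirelative-free congruence of Theorem~A. Apart from the Chevalley commutator formula $[t_{ih}(x),t_{hj}(y)]=t_{ij}(xy)$ (available because $n\ge 3$), no matrix calculation is needed.

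First the bilinearity relations. Writing $t_{ij}(a_1+a_2)=t_{ij}(a_1)t_{ij}(a_2)$ and expanding via $[xy,z]={}^x[y,z]\cdot[x,z]$ gives $y_{ij}(a_1+a_2,b)={}^{t_{ij}(a_1)}y_{ij}(a_2,b)\cdot y_{ij}(a_1,b)$, and by Lemma~\ref{l14} the conjugation by $t_{ij}(a_1)\in E(n,R)$ can be dropped modulo $E(n,R,A\circ B)$; additivity in the second argument is the mirror computation with $[x,yz]=[x,y]\cdot{}^y[x,z]$. Setting $a_2=-a_1$, resp. $b_2=-b_1$, then yields the two descriptions of $y_{ij}(a,b)^{-1}$.

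The substance is the degeneracy relations, and the efficient way in is to strengthen Lemma~\ref{l14}. By Lemma~\ref{l3} the group $[E(n,A),E(n,B)]$ is generated by the elementary commutators $y_{ij}(a,b)$ together with the generators $z_{ij}(ab,c)$ and $z_{ij}(ba,c)$; but $ab\in AB$ and $ba\in BA$ lie in $A\circ B=AB+BA$, so these $z$-generators already belong to $E(n,R,A\circ B)$. Hence, modulo $E(n,R,A\circ B)$ (a normal subgroup of $E(n,R)$ contained in $[E(n,A),E(n,B)]$ by Lemma~\ref{l5}), the group $[E(n,A),E(n,B)]$ is generated by the images of the $y_{ij}(a,b)$, each of which is central in $E(n,R)/E(n,R,A\circ B)$ by Lemma~\ref{l14}; since the elements of that quotient commuting with everything form a subgroup, this forces
\[
\big[[E(n,A),E(n,B)],\,E(n,R)\big]\ \le\ E(n,R,A\circ B).
\]
Now fix $h\ne i,j$. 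Then $t_{ij}(a_1a_2)=[t_{ih}(a_1),t_{hj}(a_2)]$ with $t_{ih}(a_1),t_{hj}(a_2)\in E(n,A)$, so $y_{ij}(a_1a_2,b)=[t_{ij}(a_1a_2),t_{ji}(b)]$ lies in $\big[[E(n,R,A),E(n,R,A)],E(n,R,B)\big]$. Applying the three subgroups lemma (Lemma~\ref{l1}) with $F=H=E(n,R,A)$, $K=E(n,R,B)$ — all normal in $G=E(n,R)$ — bounds this span by $\big[[E(n,R,A),E(n,R,B)],E(n,R,A)\big]$, which by Lemma~\ref{l4} equals $\big[[E(n,A),E(n,B)],E(n,R,A)\big]$ and hence lies in $E(n,R,A\circ B)$ by the centrality just shown; thus $y_{ij}(a_1a_2,b)\equiv e$. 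The mirror argument, with $t_{ji}(b_1b_2)=[t_{jh}(b_1),t_{hi}(b_2)]\in[E(n,B),E(n,B)]$ and $F=H=E(n,R,B)$, $K=E(n,R,A)$, gives $y_{ij}(a,b_1b_2)\equiv e$. Finally Theorem~A moves a ring factor across the comma: $y_{ij}(ab_1,b_2)\equiv y_{ij}(a,b_1b_2)\equiv e$ and $y_{ij}(a_1,a_2b)\equiv y_{ij}(a_1a_2,b)\equiv e$ modulo $E(n,R,A\circ B)$.

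The main obstacle is the passage, in the degeneracy relations, from levels to relative elementary subgroups: a crude level count only places $y_{ij}(a_1a_2,b)$ in $\GL(n,R,A\circ B)$, which is in general strictly larger than $E(n,R,A\circ B)$. What disposes of this is the observation that Lemma~\ref{l14} combined with Lemma~\ref{l3} — the $z$-generators being absorbed by $E(n,R,A\circ B)$ — is already equivalent to the stronger inclusion $\big[[E(n,A),E(n,B)],E(n,R)\big]\le E(n,R,A\circ B)$; granting this, the three subgroups lemma finishes the job. (Alternatively one can check all five families directly by the same transvection manipulations used elsewhere in the paper, but the argument above avoids them.)
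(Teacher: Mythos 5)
The paper does not actually prove this lemma: it is quoted from \cite{NZ3}, Lemmas 10 and 12, where it is obtained by direct transvection computations in the same style as the proof of Theorem~1 in \S~10. Your argument is correct and takes a genuinely different, softer route, so a comparison is in order. The bilinearity and inversion relations via $t_{ij}(a_1+a_2)=t_{ij}(a_1)t_{ij}(a_2)$, multiplicativity of commutators, and Lemma~\ref{l14} are the standard computation (note your expansion produces $y_{ij}(a_2,b)\cdot y_{ij}(a_1,b)$; the order is immaterial precisely because of Lemma~\ref{l14}, and incidentally your proof gives the intended $y_{ij}(a_1,b)\cdot y_{ij}(a_2,b)$ rather than the obvious typo $y_{ij}(a_1,b)\cdot y_{ij}(a_1,b)$ in the statement). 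The real divergence is in the degeneracy relations: instead of computing, you first upgrade Lemmas~\ref{l3}, \ref{l4}, \ref{l5} and \ref{l14} to the centrality statement $\big[[E(n,A),E(n,B)],E(n,R)\big]\le E(n,R,A\circ B)$ --- valid because in the quotient the $z$-generators die and the surviving generators $y_{ij}(a,b)$ are central --- and then feed the decomposition $t_{ij}(a_1a_2)=[t_{ih}(a_1),t_{hj}(a_2)]$ into the Hall--Witt identity (your appeal to Lemma~\ref{l1} is legitimate since with $F,H,K$ normal all three triple commutators are normal). This cleanly places $y_{ij}(a_1a_2,b)$ and $y_{ij}(a,b_1b_2)$ in $E(n,R,A\circ B)$, and Theorem~A then handles the mixed degeneracies; it is the same centrality observation the paper itself exploits at the end of the proof of Theorem~1. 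The one caveat is logical rather than mathematical: within the present paper Theorem~A and Lemma~\ref{l14} are imported as black boxes, so your use of Theorem~A is fine here; but in the source \cite{NZ3} the present relations precede Theorem~A, so a fully self-contained treatment would have to verify that the proof of Theorem~A does not already rely on them.
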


Together with Theorem~A this lemma asserts that modulo
$E(n,R,A\circ B)$ the elementary commutators $y_{ij}(a,b)$
do not depend on the choice of a pair $(i,j)$, $i\neq j$, and
can be considered as symbols
\begin{align}
\sigma:A/A(A+B)\otimes_R B/B(A+B)\map {}&[E(n,A),E(n,B)]/E(n,R,A\circ B),\\
(a+A(A+B))\otimes (b+B(A+B))\mapsto {}&y_{12}(a,b)\pamod{E(n,R,A\circ B)}. 
\end{align}

Let us reiterate \cite{NZ2}, Problem 1, and \cite{NZ3}, Problem 2.

\begin{problem}
Give a presentation of
$$ \big[E(n,A),E(n,B)\big]/E(n,R,A\circ B) $$
\noindent
by generators and relations. Does this presentation depend on 
$n\ge 3$? 
\end{problem}

The following lemma is classically known and obvious. It follows 
from the fact that in a Dedekind ring $R$ for any two ideals $A$ 
and $B$ there exists an ideal $C\cong A$ such that $C+B=R$.

\begin{lemma}\label{l16}
Let $R$ be a Dedekind ring, $A$ and $B$ be ideals of $R$. Then
$$ A/A(A+B)\cong B/B(A+B)\cong R/(A+B). $$
\end{lemma}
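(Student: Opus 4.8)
The plan is to follow the hint given right after the statement. Write $D=A+B$; by the evident symmetry it will be enough to show $A/AD\cong R/D$, and then the same argument with the roles of $A$ and $B$ interchanged yields $B/BD\cong R/D$.

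\textbf{Step 1: reduce to a coprime ideal.} First I would invoke the classical fact that every ideal class of a Dedekind ring contains an integral ideal comaximal with $B$: choose $\alpha\in K^{*}$, where $K$ is the field of fractions of $R$, with $C:=\alpha A\subseteq R$ and $C+B=R$ (a standard consequence of unique factorisation of ideals together with approximation at the finitely many primes dividing $B$). Multiplication by $\alpha$ is then an $R$-module isomorphism $A\cong C$ carrying the submodule $AD=A(A+B)$ onto $C(A+B)$, so that $A/AD\cong C/C(A+B)$, and it remains only to identify the latter with $R/D$.

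\textbf{Step 2: the identity $C\cap D=CD$, and the conclusion.} Since $C+B=R$, write $1=c+b$ with $c\in C$ and $b\in B$. The inclusion $CD\subseteq C\cap D$ is immediate, since $CD\subseteq C$ and $CD=CA+CB\subseteq A+B=D$. Conversely, for $x\in C\cap D$ one has $x=xc+xb$, where $xb\in CB$ because $x\in C$ and $b\in B$, and $xc\in DC=CA+CB$ because $x\in D$ and $c\in C$; hence $x\in CD$. Thus $C\cap D=CD=C(A+B)$. Now $C+D=C+(A+B)=(C+B)+A=R$, so the second isomorphism theorem gives
$$ C/C(A+B)=C/(C\cap D)\cong (C+D)/D=R/D=R/(A+B), $$
and combining this with Step 1 completes the proof.

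\textbf{On the difficulty, and an alternative route.} I do not expect any real obstacle: once the existence of a coprime representative in the ideal class of $A$ is granted, the argument is pure bookkeeping, the only genuinely computational point being the equality $C\cap D=CD$ of Step 2, which is precisely where comaximality of $C$ and $B$ is used. If one prefers to avoid Step 1, there is a slicker alternative: $A$ is an invertible, hence flat, $R$-module, so tensoring the exact sequence $0\to A+B\to R\to R/(A+B)\to 0$ by $A$ gives $A/A(A+B)\cong A\otimes_R R/(A+B)$, which is a rank-one projective module over the semilocal ring $R/(A+B)$ and is therefore free of rank one.
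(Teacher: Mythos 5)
Your proof is correct and follows exactly the route the paper indicates: it only states that the lemma ``follows from the fact that in a Dedekind ring for any two ideals $A$ and $B$ there exists an ideal $C\cong A$ with $C+B=R$,'' and your Steps 1--2 are precisely the bookkeeping (reduction to a comaximal representative, then $C\cap D=CD$ and the second isomorphism theorem) that this hint leaves implicit. The only tacit convention, shared with the paper, is that $A$ and $B$ are nonzero.
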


Thus, in this case the above symbols $\sigma$ can be considered as 
symbols
$$ \sigma:R/(A+B)\otimes_R R/(A+B)\map 
{}[E(n,A),E(n,B)]/E(n,R,A\circ B), $$
\noindent
closely related to the usual Mennicke symbols. We intend to address
Problem 2 for Dedekind rings in a subsequent paper.


\section{Proof of Theorem~1}

Now, we are all set to prove the technical heart of the present
paper, Theorem~1.

\begin{proof}
We take any $ h\neq i,j$ and rewrite the elementary commutator
$ y_{ij}(ab,c)=\big[t_{ij}(ab),t_{ji}(c)\big]$ as
$$  y_{ij}(ab,c)=t_{ij}(ab)\cdot{}^{t_{ji}(c)}t_{ij}(-ab)=
t_{ij}(ab)\cdot{}^{t_{ji}(c)}\big[t_{ih}(a),t_{hj}(-b)\big]. $$
\noindent
Expanding the conjugation by $t_{ji}(b)$, we see that 
$$  y_{ij}(ab,c)=t_{ij}(ab)\cdot\big[{}^{t_{ji}(c)}t_{ih}(a),{}^{t_{ji}(c)}t_{hj}(-b)\big] = 
t_{ij}(ab)\cdot[t_{jh}(ca)t_{ih}(a),t_{hj}(-b)t_{hi}(bc)\big]. $$
\noindent
Expanding the commutator in the right hand side, using 
multiplicativity of the commutator w.r.t. the second argument,
we get
$$  y_{ij}(ab,c)=t_{ij}(ab)\cdot
\big[t_{jh}(ca)t_{ih}(a),t_{hj}(-b)\big]\cdot
{}^{t_{hj}(-b)}\big[t_{jh}(ca)t_{ih}(a),t_{hi}(bc)\big]. $$
\noindent
Expanding the first commutator in the right hand side, and using
multiplicativity of the commutator w.r.t. the first argument,
we get
\begin{multline*}
\big[t_{jh}(ca)t_{ih}(a),t_{hj}(-b)\big]=
{}^{t_{jh}(ca)}\big[t_{ih}(a),t_{hj}(-b)\big]\cdot
\big[t_{jh}(ca),t_{hj}(-b)\big]=\\
t_{ij}(-ab)\cdot t_{ih}(abca)\cdot y_{jh}(ca,-b)
\end{multline*}
\noindent
Now, the first factor cancels with $t_{ij}(ab)$, the second
factor belongs to $E(n,ABC)$, and can be discarded, so that
the first commutator is congruent modulo $E(n,R,ABC)$ to
$y_{jh}(ca,-b)$. By Lemma~14 one has 
$$ y_{jh}(ca,-b)\equiv y_{jh}(ca,b)^{-1}
\pamod{E(n,R,CA\circ B)}. $$
\par
Next, we look at the second commutator in the right hand side
of the formula for $y_{ij}(ab,c)$, and using 
multiplicativity of the commutator w.r.t. the first argument,
we get
\begin{multline*}
{}^{t_{hj}(-b)}\big[t_{jh}(ca)t_{ih}(a),t_{hi}(bc)\big]=
{}^{t_{hj}(-b)t_{jh}(ca)}\big[t_{ih}(a),t_{hi}(bc)\big]\cdot
{}^{t_{hj}(-b)}\big[t_{jh}(ca),t_{hi}(bc)\big]=\\
{}^{t_{hj}(-b)t_{jh}(ca)}y_{ih}(a,bc)\cdot
{}^{t_{hj}(-b)}t_{ji}(cabc).
\end{multline*}
\noindent
Now, the second factor belongs to $E(n,ABC)$, and stays 
there after an elementary conjugation, so it can be 
discarded. The first factor is congruent to $y_{ih}(a,bc)$ 
modulo $E(n,R,A\circ BC)$ by Lemma~\ref{l14}. 
Again by Lemma~\ref{l14} one has
$$ y_{ih}(a,bc)\equiv y_{hi}(bc,-a)\equiv y_{hi}(bc,a)^{-1}
\pamod{E(n,R,A\circ BC)}. $$
\par
Summarising the above, we see that
$$ y_{ij}(ab,c) y_{jh}(ca,b) y_{hi}(bc,a)\equiv 1
\pamod{E(n,R,ABC+BCA+CAB)}, $$
\noindent
as claimed. Observe that since all factors are central in $E(n,R)$ 
modulo the normal subgroup $E(n,R,ABC+BCA+CAB)$, which
equals
$$ E(n,R,A\circ BC)\cdot E(n,R,B\circ CA)\cdot E(n,R,C\circ AB), $$
\noindent
their order does not matter.
\end{proof}

By the last remark in the proof of Theorem~1, the levels of all three commutators in the next result are contained in the normal subgroup $E(n,R,ABC+BCA+CAB)$. Thus, Theorem~1 immediately implies
the following result that can be interpreted as a {\it three ideals
lemma\/}.

\begin{theorem}\label{t5}
Let $R$ be an associative ring with $1$, $n\ge 3$, and let $A,B,C$ 
be two-sided ideals of $R$. Then
$$ [E(n,AB),E(n,C)] \le [E(n,BC),E(n,A)]\cdot [E(n,CA),E(n,B)]. $$
\end{theorem}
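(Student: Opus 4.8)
The plan is to deduce Theorem~\ref{t5} directly from Theorem~\ref{t1} together with Lemma~\ref{l3}, exactly as the text announces (``Thus, Theorem~1 immediately implies the following result''). First I would recall that by Lemma~\ref{l3} the commutator subgroup $[E(n,AB),E(n,C)]$ is generated, as a group, by three families: the Stein---Tits---Vaserstein generators $z_{ij}((ab)c,d)$ and $z_{ij}(c(ab),d)$, the elementary commutators $y_{ij}(ab,c)$ with $a\in A$, $b\in B$, $c\in C$ (and here one pair $(i,j)$ suffices), together with the relative elementary subgroup $E(n,R,(AB)\circ C)$ which sits inside the product anyway. So it suffices to show that each such generator lies in the right-hand side $H:=[E(n,BC),E(n,A)]\cdot[E(n,CA),E(n,B)]$.

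The key observation is the level bookkeeping highlighted just after the statement of Theorem~\ref{t1}: the ``congruence modulus'' $E(n,R,ABC+BCA+CAB)$ equals $E(n,R,A\circ BC)\cdot E(n,R,B\circ CA)\cdot E(n,R,C\circ AB)$, and by Lemma~\ref{l5} each of these three pieces is contained in the corresponding double commutator: $E(n,R,A\circ BC)\le[E(n,A),E(n,BC)]=[E(n,BC),E(n,A)]$, and likewise $E(n,R,B\circ CA)\le[E(n,CA),E(n,B)]$, and $E(n,R,(AB)\circ C)\le[E(n,AB),E(n,C)]$. Since by Lemma~\ref{l4} all these double commutators are normal in $E(n,R)$, the product $H$ is a normal subgroup of $E(n,R)$ containing $E(n,R,ABC+BCA+CAB)$; in particular the Stein---Tits---Vaserstein generators of level $(AB)\circ C$ already lie in $H$, and we may freely work modulo $H$ and ignore elementary conjugations.

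It remains to treat the generator $y_{ij}(ab,c)=[t_{ij}(ab),t_{ji}(c)]$. Applying Theorem~\ref{t1} with the indices $i,j,h$ (any $h\ne i,j$, possible since $n\ge3$) gives
$$ y_{ij}(ab,c)\,y_{jh}(ca,b)\,y_{hi}(bc,a)\equiv 1\pamod{E(n,R,ABC+BCA+CAB)}, $$
hence $y_{ij}(ab,c)\equiv y_{hi}(bc,a)^{-1}\,y_{jh}(ca,b)^{-1}$ modulo $H$. Now $y_{hi}(bc,a)=[t_{hi}(bc),t_{ih}(a)]\in[E(n,BC),E(n,A)]$ and $y_{jh}(ca,b)=[t_{jh}(ca),t_{hj}(b)]\in[E(n,CA),E(n,B)]$; since both of these are normal in $E(n,R)$ their inverses lie in them and the product lies in $H$. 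Therefore $y_{ij}(ab,c)\in H$, and combining with the previous paragraph all generators of $[E(n,AB),E(n,C)]$ lie in $H$, which is the claimed inclusion. There is no real obstacle here: the only point requiring a moment's care is the normality used to absorb conjugations and to pass between $[E(n,X),E(n,Y)]$ and $[E(n,Y),E(n,X)]$, and that is supplied verbatim by Lemmas~\ref{l4} and~\ref{l5}.
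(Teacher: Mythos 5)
Your argument is correct and is essentially the paper's own derivation: the paper proves this in one sentence by observing that the congruence modulus $E(n,R,ABC+BCA+CAB)$ of Theorem~1 is contained in the right-hand side, and you have merely made the generator-by-generator bookkeeping via Lemma~3 explicit. The only step you (like the paper) leave tacit is that the second type of generators of $[E(n,AB),E(n,C)]$ are $y_{ij}(x,c)$ for arbitrary $x\in AB$, i.e.\ sums $\sum a_kb_k$, so one must first use the additivity congruences of Lemma~15 modulo $E(n,R,(AB)\circ C)\le H$ to reduce to single products $ab$ before Theorem~1 can be applied.
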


Modulo Lemma~\ref{l14} on triple commutator subgroups, it becomes
a special case of the three subgroups lemma, at least in the {\it commutative\/} case. However, the proof of Lemma~\ref{l14}  itself
crucially depends on a version of Theorem~A, Theorem~1, or
a similar calculation.


\section{Applications to multiple commutators}

Our proof of elementary multiple commutator formulas in
\cite{NZ3} is an easy induction that proceeds from the 
following two special cases, triple commutators, and quadruple 
commutators. The following results are \cite{NZ3}, Lemma 7
and Lemma 8, respectively.

\begin{lemma}\label{l17}
Let $R$ be an associative ring with $1$, $n\ge 3$, and let $A,B,C$ 
be two-sided ideals of $R$. Then
$$ \big[\big[E(n,A),E(n,B)\big],E(n,C)\big]=
\big[E(n,A\circ B),E(n,C)\big]. $$
\end{lemma}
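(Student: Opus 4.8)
The plan is to prove the two inclusions separately. The inclusion $\supseteq$ is immediate: by Lemma~\ref{l5} one has $E(n,A\circ B)\le E(n,R,A\circ B)\le[E(n,A),E(n,B)]$, so every generator $[g,u]$ of $[E(n,A\circ B),E(n,C)]$ (with $g\in E(n,A\circ B)$ and $u\in E(n,C)$) already lies in $[[E(n,A),E(n,B)],E(n,C)]$. For the reverse inclusion, write $N:=[E(n,A\circ B),E(n,C)]$; by Lemma~\ref{l4} this equals $[E(n,R,A\circ B),E(n,R,C)]$, a commutator of two subgroups normal in $E(n,R)$, hence $N$ is normal in $E(n,R)$. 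By Lemma~\ref{l3}, $[E(n,A),E(n,B)]$ is generated by the elements $z_{ij}(ab,c')$ and $z_{ij}(ba,c')$ ($1\le i\ne j\le n$, $a\in A$, $b\in B$, $c'\in R$) together with the elementary commutators $y_{ij}(a,b)$ for one fixed pair of indices. By the standard commutator calculus (multiplicativity of the commutator in each of its arguments), $[[E(n,A),E(n,B)],E(n,C)]$ is generated by the $E(n,R)$-conjugates of the commutators $[s,t_{kl}(c)]$, where $s$ runs over this generating set and $1\le k\ne l\le n$, $c\in C$; since $N$ is normal in $E(n,R)$, it therefore suffices to show $[s,t_{kl}(c)]\in N$ for every such $s$.

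When $s=z_{ij}(ab,c')$ or $s=z_{ij}(ba,c')$, we apply Lemma~\ref{l2} to the ideal $A\circ B$ (noting that $ab,ba\in A\circ B$): this gives $s\in E(n,R,A\circ B)$, whence $[s,t_{kl}(c)]\in[E(n,R,A\circ B),E(n,C)]\le[E(n,R,A\circ B),E(n,R,C)]=N$ by Lemma~\ref{l4}. When $s=y_{ij}(a,b)$ we choose an index $h\notin\{k,l\}$, which exists because $n\ge 3$. By Theorem~A together with Lemma~\ref{l15}, the elementary commutators do not depend, modulo $E(n,R,A\circ B)$, on the chosen pair of indices, so $y_{ij}(a,b)=y_{hk}(a,b)\,e_0$ for some $e_0\in E(n,R,A\circ B)$. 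Since $[e_0,t_{kl}(c)]\in[E(n,R,A\circ B),E(n,C)]\le N$ and $N$ is normal in $E(n,R)$, expanding $[y_{hk}(a,b)\,e_0,t_{kl}(c)]$ shows that $[y_{ij}(a,b),t_{kl}(c)]\equiv[y_{hk}(a,b),t_{kl}(c)]$ modulo $N$, and we are reduced to the case of three pairwise distinct indices $h,k,l$.

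For three distinct indices one expands $y_{hk}(a,b)=[t_{hk}(a),t_{kh}(b)]$ as a product of four elementary transvections and applies the Chevalley commutator formula and the identity ${}^g x=[g,x]\,x$ a few times; the outcome is that $[y_{hk}(a,b),t_{kl}(c)]$ is a product of two elementary transvections, $t_{hl}(\cdot)$ and $t_{kl}(\cdot)$, whose parameters are (up to sign) $abac$ and $bac$. Now $bac=(ba)c\in(BA)C\subseteq(A\circ B)\circ C$, and since $(A\circ B)\circ C=ABC+BAC+CAB+CBA$ is a two-sided ideal, $abac=a\cdot(bac)$ also lies in $(A\circ B)\circ C$. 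Hence $[y_{hk}(a,b),t_{kl}(c)]\in E(n,(A\circ B)\circ C)\le E(n,R,(A\circ B)\circ C)\le[E(n,A\circ B),E(n,C)]=N$ by Lemma~\ref{l5}, which completes the reduction and the proof.

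The only real computation is the explicit evaluation of $[y_{hk}(a,b),t_{kl}(c)]$ for three distinct indices; everything else is bookkeeping with the generation results of Lemmas~\ref{l2} and \ref{l3} and with the normality of $N$. I expect the main point requiring care --- rather than the main difficulty --- to be that, because $\circ$ is not associative, the parameters appearing must be shown to lie in the \emph{specific} symmetrised triple product $(A\circ B)\circ C$ and not in some other bracketing: this works precisely because the word $bac$ already contains the factor $ba\in BA\subseteq A\circ B$, after which the two-sidedness of $(A\circ B)\circ C$ absorbs the remaining left $A$-factor in $abac$.
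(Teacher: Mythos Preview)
Your argument is correct. The inclusion $\supseteq$ via Lemma~\ref{l5} is immediate, and for $\subseteq$ your reduction to commutators $[s,t_{kl}(c)]$ of the Lemma~\ref{l3} generators with elementary transvections is sound (both $[E(n,A),E(n,B)]$ and $E(n,C)$ sit inside $E(n,R)$, so the conjugating elements are elementary and normality of $N$ in $E(n,R)$ absorbs them). The explicit computation is also right: for pairwise distinct $h,k,l$ one gets
\[
[y_{hk}(a,b),t_{kl}(c)]=t_{hl}(-abac)\,t_{kl}(-bac),
\]
and, as you observe, $bac\in BAC\subseteq(A\circ B)C\subseteq(A\circ B)\circ C$, whence $abac=a(bac)$ lies in the same two-sided ideal. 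Your use of Theorem~A (with $c=1$) to relocate $y_{ij}(a,b)$ to a convenient pair of indices modulo $E(n,R,A\circ B)$ is exactly the mechanism the paper uses elsewhere.

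As for comparison: the present paper does not prove Lemma~\ref{l17} at all; it simply quotes it as \cite{NZ3}, Lemma~7. Your proof is the natural direct argument one would give from the toolkit assembled here (Lemmas~\ref{l2}--\ref{l5}, Theorem~A, Lemma~\ref{l14}), and it is in the same spirit as the proof of Lemma~\ref{l18} sketched in \S11 --- reduce to generators, move the elementary commutator to a disjoint or almost-disjoint position, and compute.
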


\begin{lemma}\label{l18}
Let $R$ be an associative ring with $1$, $n\ge 4$, and let $A,B,C,D$ 
be two-sided ideals of $R$. Then
$$ \big[\big[E(n,A),E(n,B)\big],\big[E(n,C),E(n,D)\big]\big]=
\big[E(n,A\circ B),E(n,C\circ D)\big]. $$
\end{lemma}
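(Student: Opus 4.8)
\textbf{Proof proposal for Lemma~\ref{l18}.}
The plan is to deduce the quadruple commutator formula from the triple commutator formula, Lemma~\ref{l17}, together with the three ideals lemma, Theorem~\ref{t5}, and the level computation of Lemma~\ref{l5}. One inclusion is immediate: since $\bigl[E(n,C),E(n,D)\bigr]\le E(n,R,C\circ D)$ and, by Lemma~\ref{l4}, the mixed commutator of relative elementary subgroups equals the mixed commutator of unrelative ones, we get $\bigl[[E(n,A),E(n,B)],[E(n,C),E(n,D)]\bigr]\le\bigl[E(n,R,A\circ B),E(n,R,C\circ D)\bigr]=\bigl[E(n,A\circ B),E(n,C\circ D)\bigr]$; here one also uses $\bigl[E(n,A),E(n,B)\bigr]\le E(n,R,A\circ B)$ again from Lemma~\ref{l5}. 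So the content is the reverse inclusion
$$ \bigl[E(n,A\circ B),E(n,C\circ D)\bigr]\le\bigl[\bigl[E(n,A),E(n,B)\bigr],\bigl[E(n,C),E(n,D)\bigr]\bigr]. $$

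For this I would first record, from Lemma~\ref{l17} applied with the roles of the ideals permuted, that
$$ \bigl[E(n,A\circ B),E(n,C)\bigr]=\bigl[[E(n,A),E(n,B)],E(n,C)\bigr]\le\bigl[[E(n,A),E(n,B)],E(n,R,C)\bigr], $$
and that this last group is normal in $E(n,R)$ (the commutator of two subgroups normal in $E(n,R)$ is normal, using that $[E(n,A),E(n,B)]$ is normal by Lemma~\ref{l4}). Next apply Lemma~\ref{l17} once more, now with $C\circ D$ in the last slot and $A\circ B$ split off: since $\bigl[E(n,A\circ B),E(n,C\circ D)\bigr]$ has level contained in $(A\circ B)\circ(C\circ D)$, and since both $[E(n,A),E(n,B)]$ and $[E(n,C),E(n,D)]$ contain the respective relative elementary subgroups $E(n,R,A\circ B)$, $E(n,R,C\circ D)$ by Lemma~\ref{l5}, the strategy is to rewrite $\bigl[E(n,A\circ B),E(n,C\circ D)\bigr]$ as $\bigl[[E(n,A),E(n,B)],E(n,C\circ D)\bigr]$ via Lemma~\ref{l17}, and then to split the second argument: by Lemma~\ref{l17} again (applied to the triple $C,D$ and the ``outer'' group $[E(n,A),E(n,B)]$, which is a relative elementary-type subgroup sandwiched between $E(n,A\circ B)$ and $E(n,R,A\circ B)$), one obtains $\bigl[[E(n,A),E(n,B)],E(n,C\circ D)\bigr]=\bigl[[E(n,A),E(n,B)],[E(n,C),E(n,D)]\bigr]$ possibly up to the subgroup $E(n,R,(A\circ B)\circ(C\circ D))$.

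The remaining gap is absorbing that error term $E(n,R,(A\circ B)\circ(C\circ D))$ into the quadruple commutator. Here I would use $n\ge 4$: write $(A\circ B)(C\circ D)\subseteq A\cdot(B\circ(C\circ D))$-type containments and produce generators $t_{ij}$ of $E(n,(A\circ B)\circ(C\circ D))$ as explicit nested elementary commutators $[[t_{ih}(a),t_{hk}(b)],[t_{kl}(c),t_{lj}(d)]]$ using four distinct indices $i,h,k,l$ (or a fifth auxiliary one), which is exactly where $n\ge 4$ enters and where $n=3$ fails. Then a normal-closure argument — every generator lies in the quadruple commutator, which is normal in $E(n,R)$, hence so is the whole relative elementary subgroup it generates — closes the chain.

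\textbf{Main obstacle.} The delicate point is the bookkeeping when invoking Lemma~\ref{l17} with a group other than an honest $E(n,I)$ in one of the slots: one must check that Lemma~\ref{l17} (or the three ideals lemma, Theorem~\ref{t5}) applies with $[E(n,A),E(n,B)]$ playing the role formerly played by $E(n,A\circ B)$, which requires knowing $E(n,A\circ B)\le[E(n,A),E(n,B)]\le E(n,R,A\circ B)$ (Lemma~\ref{l5}) and that the conclusions are insensitive to replacing $E(n,A\circ B)$ by anything in this interval — this is true because all the relevant commutator subgroups are trapped between $[E(n,\cdot),E(n,\cdot)]$ and $[E(n,R,\cdot),E(n,R,\cdot)]$, which coincide by Lemma~\ref{l4}. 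Making this squeeze rigorous, rather than the generator computation with $n\ge4$, is where the real care is needed.
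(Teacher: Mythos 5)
Your argument for what you call the ``immediate'' inclusion is where the lemma actually lives, and it rests on a false premise. You assert $[E(n,C),E(n,D)]\le E(n,R,C\circ D)$ and attribute it to Lemma~\ref{l5}; but Lemma~\ref{l5} gives $E(n,R,C\circ D)\le[E(n,C),E(n,D)]\le\GL(n,R,C\circ D)$ --- the elementary inclusion goes the other way, and the gap between $E(n,R,C\circ D)$ and $[E(n,C),E(n,D)]$ is exactly the nontrivial object this series of papers studies (see \S 9, and the Gaussian-integer example in \S 12, where $[E(n,{\frak p}^3),E(n,{\frak p})]$ strictly contains $E(n,R,{\frak p}^4)$). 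So your one-line derivation of $\bigl[[E(n,A),E(n,B)],[E(n,C),E(n,D)]\bigr]\le[E(n,A\circ B),E(n,C\circ D)]$ collapses, and this is the hard direction. The route actually taken (sketched in \S 11, details in \cite{NZ3}): by Lemma~\ref{l3} and Theorem~A each factor is generated by the Stein---Tits---Vaserstein elements $z$ (which do lie in the relative elementary subgroup of the symmetrised product) together with the elementary commutators $y_{ij}(a,b)$, so everything reduces to showing $[y_{ij}(a,b),y_{hk}(c,d)]\in[E(n,A\circ B),E(n,C\circ D)]$, conjugations being harmless by Lemma~\ref{l14}. This is where $n\ge4$ enters: modulo $E(n,R,C\circ D)$ one can move $y_{hk}(c,d)$ to a pair of indices disjoint from $\{i,j\}$, where it commutes with $y_{ij}(a,b)$ outright. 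None of this appears in your proposal, and your use of $n\ge 4$ occurs elsewhere, for a step that is not needed.

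Conversely, the direction you treat as ``the content'' is one line: Lemma~\ref{l5} gives $E(n,A\circ B)\le E(n,R,A\circ B)\le[E(n,A),E(n,B)]$ and likewise for $C,D$, so $[E(n,A\circ B),E(n,C\circ D)]$ sits inside the quadruple commutator by monotonicity of mutual commutator subgroups. Your multi-stage machinery --- invoking Lemma~\ref{l17} with $[E(n,A),E(n,B)]$ ``sandwiched between $E(n,A\circ B)$ and $E(n,R,A\circ B)$'' in a slot it was not stated for, and then absorbing an error term $E(n,R,(A\circ B)\circ(C\circ D))$ via an $n\ge4$ generator computation --- is solving a problem that is not there. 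In short: the two directions are swapped in difficulty, the genuinely hard one is disposed of with an inclusion that is false in general, and the key idea (for $n\ge4$, shift one elementary commutator to disjoint support modulo the relative elementary subgroup so that the two commute) is missing.
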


These results were first proven for quasi-finite rings by
Roozbeh Hazrat and the second author, under assumption
$n\ge 3$, see \cite{Hazrat_Zhang_multiple}. However, in that 
paper the proof was based on (a weaker version of) Theorem~A
and the usual (commutative!) localisation, so that there is no 
chance to make it work over arbitrary associative rings. Here, 
for quadruple commutators we assume 
that $n\ge 4$. The reason was that in \cite{NZ3} the proof of 
Lemma proceeds as follows. By Theorem~A and Lemma~3 one
only has to prove that 
$$ [y_{ij}(a,b),y_{hk}(c,d)]\in \big[E(n,A\circ B),E(n,C\circ D)\big], $$
\noindent
for $1\le i\neq j\le n$, $1\le h\neq k\le n$, $a\in A$, 
$b\in B$, $c\in C$, $d\in D$. By Lemma~13 conjugations by 
elements $x\in E(n,R)$ do not matter, since they amount to 
extra factors from the above triple commutators, which are
already accounted for. Now, for $n\ge 4$ this finishes the 
proof, since in this case modulo $E(n,R,C\circ D)$ 
we can move $y_{hk}(c,d)$ to a position, where it commutes 
with $y_{ij}(a,b)$, by Lemma~13 or Theorem~A. 

\begin{problem}
Prove that Lemma $18$ holds also for $n=3$, or construct a 
counter-example.
\end{problem}

Actually, it seems to us that either way it will be non-trivial. 
To prove the lemma one will have to verify that the commutator
$[y_{ij}(a,b),y_{ih}(c,d)]$ of two interlaced elementary 
commutators belongs where it should, and that's a non-trivial
calculation. On the other hand, since Lemma~\ref{l18} holds for 
quasi-finite rings, none of the usual counter-examples will
work, so that one will have to construct a truly non-commutative
counter-example. But to imitate Gerasimov's universal
counter-example would be an extremely troublesome business.

Observe that in fact the three subgroups lemma and 
Lemma~\ref{l17} imply the following poor man's version of our 
Theorem~1. In the commutative case it is essentially a
slight generalisation of a result by Himanee Apte and Alexei
Stepanov \cite{AS}, Lemma 3.4. 

\begin{proposition}\label{p1}
Let $R$ be an associative ring with identity $1$, $n\ge 3$, and 
let $A$,  $B$ and $C$ be its two-sided ideals. Then 
$$ [E(n,A\circ B),E(n,C)]\le [E(n,A\circ C),E(n,B)]\cdot
[E(n,A),E(n,B\circ C)]. $$
\end{proposition}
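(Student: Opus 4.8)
The statement to prove is
$$ [E(n,A\circ B),E(n,C)]\le [E(n,A\circ C),E(n,B)]\cdot
[E(n,A),E(n,B\circ C)], $$
and the natural route, as the preamble announces, is to combine the three subgroups lemma (Lemma~\ref{l1}) with the triple commutator identification (Lemma~\ref{l17}). First I would set $F=E(n,A)$, $H=E(n,B)$, $K=E(n,C)$ inside $G=E(n,R)$. By Lemma~\ref{l17} one has $[F,H,K]=[[E(n,A),E(n,B)],E(n,C)]=[E(n,A\circ B),E(n,C)]$, and symmetrically $[H,K,F]=[E(n,B\circ C),E(n,A)]=[E(n,A),E(n,B\circ C)]$ and $[K,F,H]=[E(n,C\circ A),E(n,B)]=[E(n,A\circ C),E(n,B)]$ (using that $C\circ A=A\circ C$ as symmetrised products of two-sided ideals, and that $[\cdot,\cdot]$ is symmetric). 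So the desired inclusion is literally the conclusion $[F,H,K]\le[H,K,F]\cdot[K,F,H]$ of Lemma~\ref{l1}.

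To apply Lemma~\ref{l1} I must check its hypothesis: that two of the three subgroups $[F,H,K]$, $[H,K,F]$, $[K,F,H]$ are normal in $G=E(n,R)$. This is exactly where Lemma~\ref{l4} enters: it tells us that every double commutator $[E(n,X),E(n,Y)]$ coincides with $[E(n,R,X),E(n,R,Y)]$, which is a product of subgroups normal in $E(n,R)$, hence itself normal in $E(n,R)$. Applying this with $(X,Y)$ equal to $(A\circ B,C)$, $(B\circ C,A)$, and $(C\circ A,B)$ shows that in fact \emph{all three} of the subgroups $[F,H,K]$, $[H,K,F]$, $[K,F,H]$ are normal in $E(n,R)$, so the hypothesis of Lemma~\ref{l1} is comfortably satisfied. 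Then Lemma~\ref{l1} delivers the inclusion directly.

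The only genuine subtlety — and the step I would flag as the ``main obstacle'', though it is a bookkeeping obstacle rather than a deep one — is the non-associativity of $\circ$ noted after Lemma~\ref{l5}. In $[F,H,K]=[[F,H],K]$ the inner commutator $[E(n,A),E(n,B)]$ gets replaced by $E(n,A\circ B)$ via Lemma~\ref{l17}, and one must be careful that the symmetrised products appearing in the three terms are the ones written in the statement: $A\circ B$, $B\circ C$ (paired with $A$), and $C\circ A$ (paired with $B$), with no triple symmetrised product ever formed. Since each term only involves one application of $\circ$ to two of the three ideals, and $\circ$ is commutative in its two arguments, there is no ambiguity; one just has to match $[K,F,H]=[[E(n,C),E(n,A)],E(n,B)]=[E(n,C\circ A),E(n,B)]=[E(n,A\circ C),E(n,B)]$ to the first factor on the right and $[H,K,F]=[[E(n,B),E(n,C)],E(n,A)]=[E(n,B\circ C),E(n,A)]=[E(n,A),E(n,B\circ C)]$ to the second. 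With that matching in place the proof is complete. I would also remark, as the preamble does, that this argument is strictly weaker than Theorem~\ref{t5}, since there the inner products $AB$, $BC$, $CA$ are genuine (one-sided) products rather than symmetrised ones, which is precisely the improvement that Theorem~\ref{t1} buys and that the three subgroups lemma cannot see.
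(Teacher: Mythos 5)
Your proposal is correct and follows essentially the same route as the paper: rewrite $[E(n,A\circ B),E(n,C)]$ as the triple commutator $[[E(n,A),E(n,B)],E(n,C)]$ via Lemma~\ref{l17}, apply the three subgroups lemma, and convert the two resulting factors back via Lemma~\ref{l17}. Your explicit verification of the normality hypothesis through Lemma~\ref{l4} is a detail the paper leaves implicit, but it is the intended justification.
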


\begin{proof}
By the triple commutator formula of elementary subgroups
$$ [E(n,A\circ B),E(n,C)]=\big[[E(n,A),E(n,B)],E(n,C)\big]. $$
\noindent
By the three subgroups lemma 
\begin{multline*}
\big[[E(n,A),E(n,B)],E(n,C)\big]\le \\
\big[[E(n,A),E(n,C)],E(n,B)\big]\cdot \big[E(n,A),[E(n,B),E(n,C)]\big]. 
\end{multline*}
\noindent
Now, applying the triple commutator formula of elementary 
subgroups to the factors in the right hand side, we get
\begin{align*}
&\big[[E(n,A),E(n,C)],E(n,B)\big]=\big[E(n,A\circ C),E(n,B)\big],\\ 
&\big[E(n,A),[E(n,B),E(n,C)]\big]=\big[E(n,A),E(n,B\circ C)]\big]. 
\end{align*}
\end{proof}

By analogy, we can do the same applying the three subgroup
lemma to a quadruple commutator, and then combine it
with Lemma 15 again. Of course, this might be interesting
only for the case $n=3$.

\begin{proposition}\label{p2}
Let $R$ be an associative ring with $1$, $n\ge 3$, and let $A,B,C,D$ 
be two-sided ideals of $R$. Then
\begin{multline*}
\big[[E(n,A),E(n,B)], [E(n,C),E(n,D)] \big] \le\\
[E(n,(A\circ B)\circ C),E(n,D)]\cdot [E(n,(A\circ B)\circ D),E(n,C)]. 
\end{multline*}
\end{proposition}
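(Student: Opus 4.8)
The plan is to imitate the proof of Proposition~\ref{p1}, replacing the triple commutator there by a quadruple one: apply the three subgroups lemma to $[X,Y]$, where $X=[E(n,A),E(n,B)]$ and $Y=[E(n,C),E(n,D)]$, and then flatten the resulting factors with the triple commutator formula of Lemma~\ref{l17}. Note that, just as for Proposition~\ref{p1} and in contrast with Lemma~\ref{l18}, this uses only Lemma~\ref{l17}, which is valid already for $n\ge 3$. By Lemma~\ref{l4} the subgroups $X$ and $Y$ are normal in $E(n,R)$, hence so is $[X,Y]=[Y,X]$.

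Concretely, I would apply Lemma~\ref{l1} with $F=E(n,C)$, $H=E(n,D)$, $K=X$, so that $[F,H,K]=\big[[E(n,C),E(n,D)],X\big]=[Y,X]$. To invoke Lemma~\ref{l1} one must first check that two of the three triple commutators $[F,H,K]$, $[H,K,F]$, $[K,F,H]$ are normal in $E(n,R)$. The first, $[Y,X]$, is normal by the previous paragraph. For the other two, the symmetry of commutator subgroups together with two applications of Lemma~\ref{l17} gives
\[ [H,K,F]=\big[[E(n,D),X],E(n,C)\big]=\big[[E(n,A\circ B),E(n,D)],E(n,C)\big]=\big[E(n,(A\circ B)\circ D),E(n,C)\big] \]
and likewise
\[ [K,F,H]=\big[[X,E(n,C)],E(n,D)\big]=\big[[E(n,A\circ B),E(n,C)],E(n,D)\big]=\big[E(n,(A\circ B)\circ C),E(n,D)\big]. \]
Both right-hand sides are of the form $[E(n,-),E(n,-)]$, hence normal in $E(n,R)$ by Lemma~\ref{l4}.

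Thus Lemma~\ref{l1} applies and yields $[Y,X]\le [H,K,F]\cdot[K,F,H]$, which, by the two displayed identities and $[X,Y]=[Y,X]$, is exactly
\[ \big[[E(n,A),E(n,B)],[E(n,C),E(n,D)]\big]\le \big[E(n,(A\circ B)\circ C),E(n,D)\big]\cdot\big[E(n,(A\circ B)\circ D),E(n,C)\big], \]
the right-hand side being a subgroup, and the order of its two factors immaterial, since each factor is normal in $E(n,R)$. Unlike in Theorem~\ref{t1}, no computation at the level of elementary generators is needed: everything reduces to Lemma~\ref{l1} and Lemma~\ref{l17}. The only point requiring a little care --- the main, if mild, obstacle --- is that a subgroup such as $[E(n,D),X]=\big[E(n,D),[E(n,A),E(n,B)]\big]$ is not visibly normal in $E(n,R)$, so that the normality hypotheses of Lemma~\ref{l1} become available only after it has been rewritten, via Lemma~\ref{l17}, in the flattened form $[E(n,A\circ B),E(n,D)]$ to which Lemma~\ref{l4} applies.
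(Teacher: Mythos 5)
Your argument is correct and is essentially the paper's own proof: apply the three subgroups lemma to $\big[[E(n,C),E(n,D)],[E(n,A),E(n,B)]\big]$ and then flatten each resulting factor by two applications of the triple commutator formula of Lemma~\ref{l17}. Your explicit verification of the normality hypotheses of Lemma~\ref{l1} (via Lemma~\ref{l17} and Lemma~\ref{l4}) is a point the paper leaves implicit, but the route is the same.
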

\begin{proof}
Indeed, by the three subgroups lemma one has 
\begin{multline*}
\big[ [E(n,A),E(n,B)], [E(n,C),E(n,D)]\big] \le \\
\big[ \big[ [E(n,A),E(n,B)], E(n,C)\big],E(n,D)\big]\cdot
\big[ \big[ [E(n,A),E(n,B)], E(n,D)\big],E(n,D)\big].
\end{multline*}
\noindent
It remains to twice apply Lemma 15 to each factor.
\end{proof}

However, it is not feasible to prove Lemma 16 using calculations
at the level of subgroups. Rather, one should go to the level of 
individual elements. We have a blueprint how to do that by 
first applying the Hall---Witt identity, and then the identity in 
Theorem~1 to both resulting factors. However, the calculations
seem to be formidable, and as of today we have not succeeded.


\section{Inclusions among commutators for powers of one ideal}

Let us state the following exciting special case of Theorem 6.

\begin{proposition}\label{p3}
Let $I$ be an ideal of an associative ring $R$. Then
$$ [E(n,I^{r+s}),E(n,I^t)]\le 
[E(n,I^{r}),E(n,I^{s+t})]\cdot [E(n,I^{s}),E(n,I^{r+t})]. $$
\end{proposition}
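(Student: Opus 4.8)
The plan is to apply Theorem~6 (the three ideals lemma) directly, choosing the three ideals so that their pairwise symmetrised products collapse to powers of $I$. The key observation is that for powers of a single ideal $I$, all the symmetrised products become honest products: $I^a\circ I^b=I^aI^b+I^bI^a=I^{a+b}$, so the non-associativity issues that usually plague $A\circ B\circ C$ simply disappear here. Concretely, I would set $A=I^r$, $B=I^s$ and $C=I^t$. Then $AB=I^rI^s\subseteq I^{r+s}$, and more to the point $E(n,I^{r+s})\le E(n,AB)$ is false in general but $E(n,AB)\le E(n,I^{r+s})$; one needs the inclusion in the convenient direction, so I would instead exploit that $E(n,I^{r+s})$ is generated by the $t_{ij}(x)$ with $x\in I^{r+s}=I^r\circ I^s$ together with Lemma~\ref{l5}, which gives $E(n,R,I^r\circ I^s)\le [E(n,I^r),E(n,I^s)]$.

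Here is the cleaner route. First I would recall that Theorem~6 states
$$ [E(n,AB),E(n,C)]\le [E(n,BC),E(n,A)]\cdot[E(n,CA),E(n,B)] $$
for arbitrary two-sided ideals $A,B,C$. Feeding in $A=I^r$, $B=I^s$, $C=I^t$, the left side involves $E(n,I^rI^s)$, and since $I^rI^s$ need not equal $I^{r+s}$ as an ideal on the nose one should be a little careful; but in fact $I^{r+s}=I^r\cdot I^s$ already holds as ideals when $I$ is a two-sided ideal of an associative ring (every generator of $I^{r+s}$ is a product of $r+s$ elements of $I$, hence lies in $I^r\cdot I^s$, and conversely). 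So $E(n,I^rI^s)=E(n,I^{r+s})$, $E(n,I^sI^t)=E(n,I^{s+t})$, and $E(n,I^tI^r)=E(n,I^{t+r})=E(n,I^{r+t})$. Substituting these into Theorem~6 yields exactly
$$ [E(n,I^{r+s}),E(n,I^t)]\le [E(n,I^{s+t}),E(n,I^r)]\cdot[E(n,I^{r+t}),E(n,I^s)], $$
which, after reordering the factors within each commutator (using $[E(n,X),E(n,Y)]=[E(n,Y),E(n,X)]$), is the claimed inclusion.

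I do not anticipate a serious obstacle here: the entire content is the substitution into Theorem~6 together with the elementary ideal identity $I^{a+b}=I^a I^b$. The one point requiring a sentence of justification is precisely that identity, and the fact that it makes the various $\circ$-products unambiguous, so that the bracketing ambiguity flagged after Lemma~\ref{l5} is harmless in this special case. If one wanted to be scrupulous, one could also note that all three commutator subgroups appearing are normal in $E(n,R)$ by Lemma~\ref{l4}, so the product on the right-hand side is again a subgroup and the inclusion is between genuine subgroups; but this is already built into the statement of Theorem~6.
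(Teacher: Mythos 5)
Your proof is correct and follows exactly the route the paper intends: the paper offers no argument beyond calling Proposition~\ref{p3} a special case of Theorem~6, and your substitution $A=I^r$, $B=I^s$, $C=I^t$ together with the identity $I^aI^b=I^{a+b}$ and the symmetry $[F,H]=[H,F]$ is precisely that specialisation. The brief false start in your first paragraph is harmless, since the second paragraph supplies the clean and complete argument.
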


In the case of an even exponent this gives an inclusion among 
two such commutators.

\begin{corollary}
Let $I$ be an ideal of an associative ring $R$. Then
$$ [E(n,I^{2r}),E(n,I^t)]\le 
[E(n,I^{r}),E(n,I^{r+t})]. $$
\end{corollary}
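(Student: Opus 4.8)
The final statement to prove is the Corollary: for an ideal $I$ of an associative ring $R$,
$$ [E(n,I^{2r}),E(n,I^t)]\le [E(n,I^{r}),E(n,I^{r+t})]. $$

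The plan is to deduce this as a direct specialisation of Proposition~\ref{p3}. First I would set $s = r$ in the statement of Proposition~\ref{p3}, which gives
$$ [E(n,I^{2r}),E(n,I^t)]\le [E(n,I^{r}),E(n,I^{r+t})]\cdot [E(n,I^{r}),E(n,I^{r+t})]. $$
Now the two factors on the right-hand side are literally the same subgroup $[E(n,I^{r}),E(n,I^{r+t})]$, so their product is just that subgroup itself, and the inclusion follows immediately.

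The only point that deserves a word of care is that $[E(n,I^r),E(n,I^{r+t})]$ really is a \emph{subgroup} (so that $H\cdot H = H$ makes sense and equals $H$), not merely a subset generated by commutators; but by definition the mixed commutator subgroup $[F,H]$ of two subgroups is the subgroup they generate via commutators, so this is automatic. No genuine obstacle arises here — the content is entirely in Proposition~\ref{p3}, which in turn rests on Theorem~\ref{t5}, the three ideals lemma, applied with $A = I^r$, $B = I^s$, $C = I^t$ together with the elementary inclusions $I^a I^b \subseteq I^{a+b}$ and hence $E(n, I^{a+b}) \le E(n, I^a \circ I^b)$ controlling the levels. So the proof is a single line.

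\begin{proof}
Apply Proposition~\ref{p3} with $s=r$. This yields
$$ [E(n,I^{2r}),E(n,I^t)]\le [E(n,I^{r}),E(n,I^{r+t})]\cdot [E(n,I^{r}),E(n,I^{r+t})]
= [E(n,I^{r}),E(n,I^{r+t})], $$
the last equality holding because $[E(n,I^{r}),E(n,I^{r+t})]$ is a subgroup.
\end{proof}
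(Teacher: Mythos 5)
Your proposal is correct and is exactly the argument the paper intends: the Corollary is stated as an immediate specialisation of Proposition~3, obtained by setting $s=r$ so that the two factors on the right-hand side coincide and their product collapses to the single subgroup $[E(n,I^{r}),E(n,I^{r+t})]$. Nothing further is needed.
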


Iterated application of this proposition allows 
to establish all inclusions among the commutator subgroups
$[E(n,I^{r}),E(n,I^s)]$ of a given level. The following
remarkably easy argument was suggested to the authors
by Fedor Petrov. In the following theorem we call inclusions 
that result from Proposition~\ref{p3}, generic. It is not only 
interesting in itself, but also very relevant to obtain definitive 
results for Dedekind rings. These inclusions hold for
arbitrary associative rings. Of course, for a specific ring 
some of them may become equalities.

\begin{theorem}\label{t6}
Let $I$ be an ideal of an associative ring $R$, $m\ge 1$. 
Then the generic lattice of elementary commutator subgroups
$$ H(r)=[E(n,I^{r}),E(n,I^{m-r})]\le E(n,R,I^m),\qquad
0\le r\le m, $$
\noindent
of level $I^m$ is isomorphic to the lattice of divisors of $m$. 
In other words, generically,
$$ [E(n,I^{r}),E(n,I^{m-r})]\le [E(n,I^{s}),E(n,I^{m-s})]
\quad\Longleftrightarrow\quad \gcd(s,m) | \gcd(r,m). $$
\end{theorem}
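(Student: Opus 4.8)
The plan is to extract the combinatorics of the divisor lattice from Proposition~\ref{p3} by a two-sided argument. Write $H(r) = [E(n,I^r),E(n,I^{m-r})]$ for $0 \le r \le m$, and set $\delta(r) = \gcd(r,m)$. I claim $H(r) \le H(s)$ whenever $\delta(s) \mid \delta(r)$, and that these are all the generic inclusions. The easy half is symmetry and the trivial bounds: $H(r) = H(m-r)$ and, since $\delta(r) = \delta(m-r)$, the claimed criterion is symmetric in $r \mapsto m-r$ as it must be; moreover $H(0) = H(m) = [E(n,R),E(n,I^m)] = E(n,R,I^m)$ is the top, consistent with $\delta(0) = \delta(m) = m$ and $m \mid m$.

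For the \emph{positive} direction, I would first record the ``halving'' move from the Corollary: $H(2r) \le H(r)$ whenever $0 \le 2r \le m$ (apply Proposition~\ref{p3} with exponents $(r,r,m-2r)$). More generally, feeding exponents $(r,r,m-2r)$ is the special case; the full Proposition~\ref{p3} with exponents $(a,b,m-a-b)$ gives $H(a+b) \le H(a)\cdot H(b)$ whenever $a,b \ge 1$ and $a+b \le m$. This is the key engine. From it one gets, by an easy induction on $r$, that $H(r) \le H(\delta(r))$: indeed if $d = \delta(r)$ then $r = qd$ with $\gcd(q,m/d)=1$, and iterating $H(qd) \le H((q-1)d)\cdot H(d)$ (valid as long as the exponents stay in range, using $H(m-r) = H(r)$ to fold back any exponent exceeding $m/2$) collapses everything into products of copies of $H(d)$. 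Then, to show $H(d) \le H(d')$ whenever $d' \mid d \mid m$, I would use Petrov's argument: since $d' \mid d$, Bézout in $\mathbb Z/(m/d')$ — or rather a direct greedy decomposition — lets one write a chain from $d$ down to $d'$ where each step is an application of $H(a+b) \le H(a)H(b)$ with both $a,b$ multiples of $d'$ and $\le m/2$; the point is that the sub-semigroup of $\{1,\dots,m\}$ generated under the operation $a,b \mapsto a+b$ (mod folding by $r \mapsto m-r$), starting from $d$, reaches exactly the multiples of $d' = \gcd(d,m)$-refinements, and in particular reaches $d'$. Composing, $H(r) \le H(\delta(r)) \le H(\delta(s)) \le H(s)\cdot(\text{product of }H(\delta(s))\text{'s}) = H(s)$ when $\delta(s) \mid \delta(r)$, where the last equality holds because $H(\delta(s))$ is a subgroup once we know $H(\delta(s)) \le H(s)$ and $H(s) \le H(\delta(s))$ forces $H(s) = H(\delta(s))$.

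The \emph{main obstacle}, and the part requiring care, is the ``all generic inclusions are these'' direction — i.e.\ that when $\delta(s) \nmid \delta(r)$ one should \emph{not} expect $H(r) \le H(s)$ generically. This cannot be proved by group theory over a fixed ring (a specific ring may satisfy extra collapses); rather, ``generic'' must be pinned down as: the inclusion is a formal consequence of Lemma~\ref{l5}, Lemma~\ref{l17}, and Proposition~\ref{p3}. I would make this precise by exhibiting a ``universal'' witness — e.g.\ work in a free-enough associative ring with a single ideal $I$ such that the subquotients $E(n,I^j,I^k)/E(n,R,I^{j+k})$ are as large as the general theory permits, so that $H(r) \le H(s)$ would force a containment of levels $I^{?}$-symbols that fails unless $\delta(s)\mid\delta(r)$. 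Concretely, Lemma~\ref{l6} and the symbol map $\sigma$ of \S 9 suggest grading by ``total $I$-weight'' is not enough; one needs the finer invariant $\gcd(r,m)$, which I expect to read off from a $\mathbb Z/m$-grading on a suitable universal quotient. This non-collapse verification is where the real work lies; the positive inclusions are a routine, if slightly fiddly, induction on the integer lattice.
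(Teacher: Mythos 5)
Your positive direction is, modulo some roundabout phrasing, exactly the paper's argument (suggested by Fedor Petrov): read the index mod $m$, note $H(r)=H(m-r)$ and $H(r+s)\le H(r)H(s)$ from Proposition~\ref{p3} (folding an exponent by $r\mapsto m-r$ whenever it exceeds $m/2$), deduce $H(kr)\le H(r)$ for all $k\in\Int/m\Int$ by induction, and conclude via B\'ezout that $H(r)\le H(s)$ whenever $\gcd(s,m)\mid\gcd(r,m)$. The ``only if'' direction that you flag as the main obstacle is not proved in the paper either: there ``generic'' is taken by definition to mean ``a formal consequence of Proposition~\ref{p3}'', so the converse reduces to the purely combinatorial fact that the subgroup of $\Int/m\Int$ generated by $s$ is $\gcd(s,m)\Int/m\Int$, and no universal witness ring is constructed.
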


\begin{proof} Let us understand $r$ in the definition of $H(r)$
modulo $m$. Then, clearly, one has $H(r)=H(m-r)=H(-r)$ and 
$H(r+s)\le H(r)H(s)$. Indeed, for $r,s\le m/2$ this is precisely
Proposition~\ref{p3}. When $r>m/2$ or/and $s>m/2$, we 
replace one 
or both of them by $m-r$ or/and $m-s$, and then apply 
Proposition~\ref{p3}.
\par
In particular, this means that $H(kr)\le H(r)$ for all $k\in\Int/m\Int$. 
Indeed, by induction $H(kr)\le H((k-1)r)H(r)\le H(r)$. This
establishes the second, and thus also the first claim of the
Theorem.
\end{proof}

\par
$\bullet$ In particular, this theorem implies that 
$$ H(r)=H(s)\quad\Longleftrightarrow\quad 
\gcd(r,m)=\gcd(s,m) $$
\noindent
and that
$$ H(r)\le H(s_1)\ldots H(s_l)\quad\Longleftrightarrow\quad 
\gcd(\gcd(s_1,m),\ldots,\gcd(s_l,m)) | \gcd(r,m). $$

\par\smallskip
$\bullet$ At level $I^p$, where $p$ is a prime, all non-trivial double commutators $ [E(n,I^r),E(n,I^s)]$, $r+s=p$, are equal.
\par\smallskip
$\bullet$ At level $I^4$ one has
$$ E(n,R,I^4)\le [E(n,I^2),E(n,I^2)]\le [E(n,I^3),E(n,I)]. $$
\noindent
The second claim of \cite{Mason_Stothers}, Theorem~5.4
asserts that the second inclusion may be strict! Actually, it is
strict already in the simplest example, where $R=\Int[i]$ is the
ring of Gaussian integers, and $I={\frak p}=(1+i)\Int[i]$ is the 
prime divisor of 2,
$$ [E(n,\Int[i],{\frak p}^2),E(n,\Int[i],{\frak p}^2)]< 
[E(n,\Int[i],{\frak p}^3),E(n,\Int[i],{\frak p})], $$
\noindent
of index 2. In other words, 
$$ [E(n,\Int[i],{\frak p}^2),E(n,\Int[i],{\frak p}^2)]=
E(n,\Int[i],{\frak p}^4), $$
\noindent
whereas 
$$ [E(n,\Int[i],{\frak p}^3),E(n,\Int[i],{\frak p})]=
\SL(n,\Int[i],{\frak p}^4). $$

\par\smallskip
$\bullet$ At level $I^6$ one has
$$ E(n,R,I^6)\le [E(n,I^3),E(n,I^3)], [E(n,I^4),E(n,I^2)]\le [E(n,I^5),E(n,I)], $$
\noindent
and there are no obvious inclusions between $[E(n,I^3),E(n,I^3)]$
and $ [E(n,I^4),E(n,I^2)]$. However, the third claim of \cite{Mason_Stothers}, Theorem~5.4
asserts in the above example of Gaussian integers one has
$$ [E(n,\Int[i],{\frak p}^4),E(n,\Int[i],{\frak p}^2)]=
E(n,\Int[i],{\frak p}^6), $$
\noindent
whereas 
$$ [E(n,\Int[i],{\frak p}^3),E(n,\Int[i],{\frak p}^3)]=
[E(n,\Int[i],{\frak p}^5),E(n,\Int[i],{\frak p})], $$
\noindent
is strictly larger, being a proper intermediate subgroup between
$E(n,\Int[i],{\frak p}^6)$ and $\SL(n,\Int[i],{\frak p}^6)$, both
indices being equal to 2.

\par\smallskip
$\bullet$ At level $I^{30}$, any of the three subgroups
$$   [E(n,I^6),E(n,I^{10})],\quad  [E(n,I^6),E(n,I^{15})],\quad
[E(n,I^{10}),E(n,I^{15})] $$
\noindent
is contained in the product of the other two.


\section{When $[E(n,A+B),E(n,A\cap B)]=[E(n,A),E(n,B)]$?}

For the ideals themselves, one has an obvious inclusion 
$$ (A+B)\circ(A\cap B)=(A+B)(A\cap B)+(A\cap B)(A+B)\le 
AB+BA=A\circ B. $$
\noindent
Only very rarely this inclusion is always an equality. In fact, it is
classically known that among
commutative integral domains $(A+B)(A\cap B)=AB$ characterises 
Prüfer domains.
\par
A Noetherian Pr\"ufer domain is a Dedekind domain, so any
Noetherian domain that is not Dedekind provides a counterexample
to the equality.
Let, for instance, $R=K[x,y]$, $A=xR$, $B=yR$. Then $A+B=xR+yR$, whereas
$A\cap B=AB=xyR$, since $R$ is factorial and $x$ and $y$ are
coprime. Thus, 
$$ (A+B)(A\cap B)=x^2yR+xy^2R<AB. $$

The following inclusion can be verified in the style of the original 
proof of Lemma 7, see \cite{Vavilov_Stepanov_revisited}. But 
it is also an immediate corollary of Lemmas~2--6.

\begin{proposition}\label{p4}
For any two ideals $A,B\unlhd R$, $n\ge 3$, one has
$$ [E(n,A+B),E(n,A\cap B)]\le [E(n,A),E(n,B)] $$
\end{proposition}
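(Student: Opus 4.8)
The plan is to reduce the statement to Lemma~\ref{l4} together with the elementary fact (Lemma~\ref{l7}) that $E(n,A+B)$ is generated by $E(n,A)$ and $E(n,B)$. First I would record that $N:=[E(n,A),E(n,B)]$ is normal in $E(n,R)$: by Lemma~\ref{l4} it coincides with $[E(n,R,A),E(n,R,B)]$, and since $E(n,R,A)$ and $E(n,R,B)$ are both normal in $E(n,R)$, so is their mutual commutator. This normality is exactly what will let the argument close up.

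Next I would use that $[E(n,A+B),E(n,A\cap B)]$ is generated by the commutators $[g,h]$ with $g\in E(n,A+B)$ and $h\in E(n,A\cap B)$. By Lemma~\ref{l7} each such $g$ can be written as $g=g_1\cdots g_m$ with every $g_k$ lying in $E(n,A)$ or in $E(n,B)$, and iterated multiplicativity of the commutator in its first argument then expresses $[g,h]$ as a product of $E(n,R)$-conjugates of the commutators $[g_k,h]$. Since $A\cap B\subseteq A$ and $A\cap B\subseteq B$, we have $E(n,A\cap B)\le E(n,A)\cap E(n,B)$, so $h$ belongs to both $E(n,A)$ and $E(n,B)$; hence $[g_k,h]\in[E(n,A),E(n,B)]=N$ when $g_k\in E(n,A)$, and $[g_k,h]\in[E(n,B),E(n,A)]=N$ when $g_k\in E(n,B)$. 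As $N\unlhd E(n,R)$, all the conjugating factors do no harm, so $[g,h]\in N$; since such $[g,h]$ generate the left-hand side, $[E(n,A+B),E(n,A\cap B)]\le N$.

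I do not expect a genuine obstacle here — the whole proof is bookkeeping with the standard commutator identities — but the one ingredient that must really be invoked is the normality of $[E(n,A),E(n,B)]$ in $E(n,R)$, i.e.\ Lemma~\ref{l4}; without it the conjugates arising from expanding $[g_1\cdots g_m,h]$ would not visibly remain inside $[E(n,A),E(n,B)]$ and the naive inclusion would not be enough. Alternatively, one can run the same computation on the relative level, replacing Lemma~\ref{l7} by the Corollary $E(n,R,A+B)=E(n,R,A)E(n,R,B)$ and using the identity $[g_1g_2,h]={}^{g_1}[g_2,h]\cdot[g_1,h]$; this is the variant in the spirit of the original proof of Lemma~\ref{l7} alluded to just before the statement, cf.\ \cite{Vavilov_Stepanov_revisited}.
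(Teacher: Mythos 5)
Your proof is correct, but it takes a genuinely different route from the one written in the paper. The paper's proof runs through the generators of Lemma~\ref{l3}: the Stein---Tits---Vaserstein generators of the left-hand side are absorbed by the level computation $E\big(n,R,(A+B)\circ(A\cap B)\big)\le E(n,R,A\circ B)\le[E(n,A),E(n,B)]$, and the remaining elementary commutators $y_{ij}(a+b,c)$, $a\in A$, $b\in B$, $c\in A\cap B$, are rewritten as $y_{ij}(a,c)\cdot y_{ij}(c,-b)$ modulo $E(n,R,A\circ B)$ using the additivity and symmetry congruences of \S 9, after which both factors visibly lie in $[E(n,A),E(n,B)]$. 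You instead decompose the \emph{first} argument of a generic commutator via Lemma~\ref{l7}, observe that $E(n,A\cap B)\le E(n,A)\cap E(n,B)$, and let the normality of $N=[E(n,A),E(n,B)]$ in $E(n,R)$ (from Lemma~\ref{l4}, since $N=[E(n,R,A),E(n,R,B)]$ is a commutator of two normal subgroups) swallow the conjugates produced by multiplicativity; each piece $[g_k,h]$ lands in $[E(n,A),E(n,B)]$ or $[E(n,B),E(n,A)]$, which coincide. You are right that normality is the one genuinely non-elementary ingredient, and both arguments ultimately rest on the generation theorem, Lemma~\ref{l3}. Your version is the ``immediate corollary of Lemmas 2--6'' that the paper alludes to just before the statement, and it avoids the congruence calculus entirely; what the paper's computation buys in exchange is an explicit decomposition of each generator $y_{ij}(a+b,c)$ modulo the level subgroup, which is exactly the kind of symbol-level information relevant to the finer question of when the inclusion is an equality (Problem 4) and to the intended Dedekind-ring sequel.
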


\begin{proof}
Lemma 3 and the formula at the beginning of this section show that
the level of the left hand side is contained in the level of the right
hand side,
$$ E\big(n,R,(A+B)\circ(A\cap B)\big)\le E(n,R,A\circ B). $$
\par
Thus, it only remains to prove that the elementary commutators
$y_{ij}(a+b,c)$, where $a\in A$, $b\in B$, $c\in A\cap B$,
in the left hand side belong to the right hand side.
\par
By Lemma 5, one has
$$ y_{ij}(a+b,c)\equiv 
 y_{ij}(a,c)\cdot y_{ij}(b,c)
\pamod{E\big(n,R,(A+B)\circ(A\cap B)\big)}. $$
\noindent
Thus, this congruence holds also modulo the larger subgroup
$E(n,R,A\circ B)$. 
\par
On the other hand, Lemma 4
implies that 
$$ y_{ij}(b,c)\equiv y_{ij}(c,-b)
\pamod{E(n,R,A\circ B)}. $$ 
\par
Combining the above congruences, we see that
$$ y_{ij}(a+b,c)\equiv  y_{ij}(a,c)\cdot y_{ij}(c,-b)\pamod{E(n,R,A\circ B)}, $$ 
\noindent
where both commutators in the right hand side belong to
$[E(n,A),E(n,B)]$, which proves the desired inclusion.
\end{proof}

Of course, when $A$ and $B$ are comaximal, by Lemma 7 one has
$$ [E(n,A+B),E(n,A\cap B)]=[E(n,A),E(n,B)]. $$
\noindent
Indeed, in this case $A\cap B=AB$ so that both sides are
equal to $E(n,R,AB)$.  This is also true in the opposite 
case, when $A=B$, as in all counter-examples listed in
\cite{NZ3}. But, as we've seen, in general it may break
already as regards the levels of these subgroups, since
the level of the left hand side may be strictly smaller, than
the level of the right hand side. Thus, the question remains

\begin{problem}
When
$$ [E(n,A+B),E(n,A\cap B)]=[E(n,A),E(n,B)]? $$
\end{problem}

In a subsequent paper we will show that this equality, and 
in fact much more general statements, hold for Dedekind 
rings.


\section{Intersections of elementary subgroups}

Let $A$ and $B$ be two ideals of a commutative ring $R$,  $n\ge 3$.
Clearly,
$$ [E(n,A),E(n,B)]\le E(n,R,A)\cap E(n,R,B)]. $$
\noindent
There is no obvious counter-example to the following stronger claim.

\begin{problem}
Is it true that for all ideals $A,B\unlhd R$, $n\ge 3$, one has
$$ [E(n,A),E(n,B)]\le E(n,R,A\cap B)]. $$
\end{problem}

This is obviously true when $[E(n,A),E(n,B)]=E(n,R,AB)$.
But in all examples where $[E(n,A),E(n,B)]>E(n,R,AB)$ we are
aware of, one still has 
$$ [E(n,A),E(n,B)]\le E(n,R,A\cap B). $$ 
\noindent
In these examples usually $A=B$, when the above inclusion is
obvious.
\par
Dually to the Corollary 3 of Lemma 3 one has
$$ \GL(n,R,A)\cap\GL(n,R,B)=\GL(n,R,A\cap B), $$
\noindent
this equality is classically known, and obvious. The same
holds also for congruence subgroups in $\SL(n,R)$.

However, a similar statement for sums of ideals is 
obviously false for $\GL(n,R,A)$, already in the case
$R=\Int$. In other words, in general $\GL(n,R,A)\GL(n,R,B)$
is strictly smaller than $\GL(n,R,A+B)$, even for comaximal
$A$ and $B$. The trivial reason is that $R$ may have more
invertible elements than just those that can be expressed 
as products of invertible elements congruent  to 1 modulo 
$A$ or modulo $B$.
\par
In fact, even for the easier case of $\SL(n,R)$ the equality
$$ \SL(n,R,A)\SL(n,R,B)=\SL(n,R,A+B) $$
\noindent
only holds under some very strong assumptions, such as one
of the factor-rings $R/A$ or $R/B$ being semi-local\footnote{This
is automatically the case, for instance, for non-zero ideals
in Noetherian integral domains of dimension 1. Say, for
Dedekind rings.}, see \cite{Bass}, Corollary 9.3, p.~267,
or \cite{Mason_Stothers}, Theorem~2.2. 

Also the corresponding property for intersections of elementary
subgroups fails in general.

\begin{proposition}\label{p5}
For two ideals $A,B\unlhd R$ the group $E(n,R,A\cap B)$ can be 
strictly smaller than $E(n,R,A)\cap E(n,R,B)$. 
\end{proposition}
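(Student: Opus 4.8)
The plan is to exhibit a commutative ring $R$ and two ideals $A,B\unlhd R$ for which the equality $E(n,R,A\cap B)=E(n,R,A)\cap E(n,R,B)$ visibly fails. The natural place to look is a ring where relative elementary subgroups are \emph{not} equal to the corresponding congruence subgroups, i.e.\ where $K_1$-type obstructions are present. The key mechanism is the following: if one can find ideals $A,B$ with $A\cap B$ small --- ideally $A\cap B=0$ or a power of a maximal ideal --- while $E(n,R,A)$ and $E(n,R,B)$ are both large enough that their intersection strictly exceeds $E(n,R,A\cap B)$, we are done. The cleanest instance is to take $A=B$ but view them through a ring where $E(n,R,A)\ne \GL(n,R,A)\cap\SL$, combined with some nontrivial identification; more robustly, one uses the Bass--Milnor--Serre circle of examples, or a Dedekind-type ring, where $SK_1$ of a suitable level ideal is nontrivial.

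Concretely, I would proceed as follows. \emph{Step 1.} Pick $R$ a Dedekind arithmetic ring (or $\Int[i]$, as in the Mason--Stothers examples already invoked in \S12), and choose $A,B$ to be two distinct prime powers, or $A={\frak p}^2$ and $B={\frak p}^3$, so that $A\cap B={\frak p}^3$. \emph{Step 2.} Compute, or quote from the literature (Bass \cite{Bass}, Mason--Stothers \cite{Mason_Stothers}), the precise index of $E(n,R,\,\cdot\,)$ in $\SL(n,R,\,\cdot\,)$ for these levels; in the Gaussian integer example, $E(n,\Int[i],{\frak p}^k)$ can have index $2$ in $\SL(n,\Int[i],{\frak p}^k)$ for suitable $k$. \emph{Step 3.} Use the inclusion $E(n,R,A\cap B)\le E(n,R,A)\cap E(n,R,B)\le \SL(n,R,A\cap B)$ together with the computed indices to show the left inclusion is proper: one needs an element of $\SL(n,R,A)\cap\SL(n,R,B)=\SL(n,R,A\cap B)$ that lies in \emph{both} $E(n,R,A)$ and $E(n,R,B)$ but not in $E(n,R,A\cap B)$. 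A convenient way to force this is to arrange $A\supseteq A\cap B$ with $E(n,R,A)$ already large: for instance if $A=B$, trivially $E(n,R,A)\cap E(n,R,B)=E(n,R,A)$ while $A\cap B=A$, so that degenerate case gives nothing --- hence one genuinely needs $A\ne B$ and must locate a common elementary element of level exactly $A\cap B$ that is not an elementary product of level $A\cap B$.

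The main obstacle is exactly Step 3: producing a witness matrix. Abstractly, one wants $g\in E(n,R,A)\cap E(n,R,B)$, and the subtlety is that membership in $E(n,R,A)$ and membership in $E(n,R,B)$ are detected by \emph{different} quotients ($\GL(n,R,A)/E(n,R,A)$ versus $\GL(n,R,B)/E(n,R,B)$), whereas non-membership in $E(n,R,A\cap B)$ is detected by a third quotient. One strategy to sidestep explicit matrices: take $A$ and $B$ comaximal with $A\cap B=AB$ \emph{small}, but with $A$ itself so that $E(n,R,A)$ already contains a fixed nontrivial element of $SK_1(R,AB)$-type --- this fails because comaximality forces $E(n,R,A)\cap E(n,R,B)$ down. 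A cleaner route, which I expect is what the authors do, is a direct construction over a specific non-commutative or arithmetic ring where the relevant $SK_1$ groups are explicitly known to be nonzero, e.g.\ via a Mennicke symbol argument: exhibit a Mennicke symbol that vanishes over $R/A$ and over $R/B$ (hence the corresponding matrix is elementary of level $A$ and of level $B$) but does not vanish over $R/(A\cap B)$. Reducing the whole question to non-triviality of such a Mennicke symbol is the substantive step; the rest is bookkeeping with the standard commutator and level formulas (Lemmas~\ref{l2}--\ref{l6}).
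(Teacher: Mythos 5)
Your general instinct --- that the failure must come from $\SK_1$-type obstructions detected via the Bass--Milnor--Serre formula and Mennicke symbols over a Dedekind ring of arithmetic type --- is the right one, and your closing reformulation (find a class that dies in $\SK_1(R,A)$ and in $\SK_1(R,B)$ but not in $\SK_1(R,A\cap B)$) is precisely the mechanism the paper exploits. But the proposal stops short of a proof: the statement is an existence claim, so one must actually exhibit $R$, $A$, $B$ and carry out the $\SK_1$ computation, and you explicitly defer this ``substantive step.'' Moreover, your two concrete moves both misfire. First, the suggestion $A={\frak p}^2$, $B={\frak p}^3$ is vacuous: since $B\subseteq A$ one has $A\cap B=B$, hence $E(n,R,A)\cap E(n,R,B)\le E(n,R,B)=E(n,R,A\cap B)$ and equality holds automatically. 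Second, you consider taking $A$ and $B$ comaximal and then reject this route, claiming comaximality ``forces $E(n,R,A)\cap E(n,R,B)$ down'' --- but that objection is mistaken, and this is exactly the construction that works. Comaximality gives $\SL(n,R,A)\cap\SL(n,R,B)=\SL(n,R,A\cap B)=\SL(n,R,AB)$; if one additionally arranges $\SK_1(R,A)=\SK_1(R,B)=1$ while $\SK_1(R,AB)\neq 1$, then $E(n,R,A)\cap E(n,R,B)=\SL(n,R,A)\cap\SL(n,R,B)=\SL(n,R,AB)$ strictly contains $E(n,R,AB)=E(n,R,A\cap B)$, and no witness matrix ever needs to be written down --- the index computation does all the work.

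The paper realises this with $R=\Int[\zeta]$ the ring of integers of $\Rat\left(\sqrt{-7}\right)$, $\zeta=(1+i\sqrt{7})/2$, $\mu(R)=\{\pm1\}$, ${\frak p}_1=\zeta R$ and ${\frak p}_2=\overline{\zeta}R$ the two comaximal primes above $2$, and $A={\frak p}_1^2$, $B={\frak p}_2^2$. The Bass--Milnor--Serre formula then gives $\SK_1(R,A)=\SK_1(R,B)=1$ but $\SK_1(R,AB)=\{\pm1\}$, so $E(n,R,A\cap B)$ has index $2$ in $E(n,R,A)\cap E(n,R,B)$. In short: you assembled the right toolbox, but the example itself is missing, and the one route you explicitly ruled out is the one that delivers it.
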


\begin{proof}
Here is the smallest such example. Let $R$ be the ring of
integers of the imaginary quadratic field $\Rat\left(\sqrt{-7}\right)$.
Then $R=\Int[\zeta]$, where $\zeta=\displaystyle{1+i\sqrt{7}\over 2}$
and $R^*=\mu(R)=\{\pm 1\}$.
\par
Now, set ${\frak p}_1=\zeta R$, and ${\frak p}_2=\overline{\frak p}_1=
\overline{\zeta}R$.  Since $\zeta+\overline{\zeta}=1$, the
ideals ${\frak p}_1$ and ${\frak p}_2$ are coprime (= comaximal, 
in this case). And since $\zeta\cdot\overline{\zeta}=2$, one has
$2={\frak p}_1{\frak p}_2$ so that the prime 2 completely decomposes in $R$.
\par
Recall the formula of Bass--Milnor--Serre, \cite{Bass_Milnor_Serre}
for the exponent of the $p$-part of the order of $\SK_1(R,I)$:
$$ v_p\big(|\SK_1(R,I)|\big)=\min_{{\frak p}|p}\left[
{v_{{\frak p}}(I)\over v_{{\frak p}}(p)}- {1\over p-1}\right]_{[0,v_p(\mu(R))]}. $$
\noindent
Here the minimum is taken over all prime divisors of $p$ in $R$,
while $[x]_{[0,m]}$ is the closest integer in the interval $[0,m]$
to the integer part $[x]$ of $x$.
\par
Now, set $A={\frak p}_1^2$, $B={\frak p}_2^2$. The ideals $A$ 
and $B$ are still comaximal, $A+B=R$. In particular, $A\cap B=AB$.
\par
Now, this formula implies that $\SK_1(R,A)=\SK_1(R,B)=1$,
in other words,
$$ E(n,R,A)=\SL(n,R,A),\qquad E(n,R,B)=\SL(n,R,B), $$
\noindent
and thus
$$ E(n,R,A)\cap E(n,R,B)=
\SL(n,R,A)\cap\SL(n,R,B)=\SL(n,R,AB). $$
\par
On the other hand, $\SK_1(R,A\cap B)=\SK_1(R,AB)=\{\pm 1\}$,
so that the subgroup $E(n,R,A\cap B)=E(n,R,AB)$ has index 2 in 
$E(n,R,A)\cap E(n,R,B)$.
\end{proof}

Of course, since $A$ and $B$ are comaximal, by Lemma 7 one has 
$$ [E(n,A),E(n,B)]=E(n,R,AB)=E(n,R,A\cap B), $$ 
\noindent
so that we do not get a counter-example to Problem 3. By the
same token, there are no such counter-examples for imaginary 
quadratic rings. On the other hand, Lemma 9 the last equality 
always holds for Dedekind rings of arithmetic type with {\it infinite\/} 
multiplicative group, so that in this case there are no 
counter-examples to Problem 3 either.


\section{Final remarks}

It would be natural to generalise results of the present paper
to more general contexts.

\begin{problem}
Generalise Theorem~$1$ and other results of the 
present paper to Chevalley groups.
\end{problem}

We do not see any difficulties in treating the simply laced 
case. However, for doubly laced systems and for type $\G_2$
one might get longer and fancier formulas, than those
in Theorem~1. 

\begin{problem}
Generalise Theorems~$1$ and other results of the present 
paper to Bak's unitary groups.
\end{problem}

It was a great experience to collaborate in this field with 
Roozbeh Hazrat and Alexei Stepanov over the last decades. 
Also, we are very grateful to Pavel Kolesnikov for his questions 
during our talk, and to Fedor Petrov for suggesting the above 
proof of Theorem 6.


\end{document}